\title
[Strong Frobenius structures associated with q-difference operators]
{Strong Frobenius structures associated with q-difference operators}
\author{\firstname{Daniel} \middlename{} \lastname{vargas-Montoya}}
\thanks{This project has received funding from the European Research Council (ERC) under the European Union's Horizon 2020 research and innovation programme under the Grant Agreement No 648132}
\keywords{Strong Frobenius structure, q-difference operator, confluence, cyclotomic polynomial}
\begin{document}
\begin{abstract}

The notion of strong Frobenius structure is classically studied in the theory of $p$-adic differential operators. In the present work, we introduce a new definition of the notion of strong Frobenius structure for $q$-difference operators. The relevance of this definition is supported by two main results. The first one deals with \emph{confluence}. We show that if the $q$-difference operator $L_q$ has a strong Frobenius structure for a prime $p$ with period $h$ and if $L$ is the $p$-adic differential operator obtained from $L_q$ by letting $q$ tend to 1, then $L$ has a strong Frobenius structure for $p$ with period $h$. The second one deals with congruence modulo cyclotomic polynomials. We show that if $f(q,z)\in\mathbb{Z}[q][[z]]$ is a solution of a $q$-difference operator having strong Frobenius structure for $p$ then $f(q,z)$ satisfies some congruences modulo the $p$-th cyclotomic polynomial. Another definition of strong Frobenius structures associated with $q$-difference operators has been introduced by André and Di Vizio and we also point out why their definition is not suitable for our applications: confluence and  congruence modulo cyclotomic polynomials. Finally, we show that  some $q$-hypergeometric operators of order 1 have a strong Frobenius strong for infinitely many primes numbers.

\end{abstract}

\maketitle


\section{Introduction}

The notion of strong Frobenius structure for a differential operator was introduced by Dwork in \cite{DworksFf} and it is usually studied in the context of $p$-adic differential equations. Given a prime number $p$, the field $\mathbb{Q}_p$ of $p$-adic numbers is the completion of the field of rational numbers $\mathbb{Q}$ with respect to the $p$-adic norm, the ring $\mathbb{Z}_p$ is the set of of elements of $\mathbb{Q}_p$ whose $p$-adic norm is less than or equal to 1, and the field $\mathbb{C}_p$ is the completion of the algebraic closure of $\mathbb{Q}_p$ with respect to the $p$-adic norm. The field $\mathbb{C}_p(z)$ is endowed with the Gauss norm, $$\left|\frac{\sum_{i=0}^{n}a_iz^i}{\sum_{j=0}^{m}b_jz^j}\right|_{\mathcal{G}}=\frac{\sup\{a_i\}}{\sup\{b_j\}}.$$
The field $E_p$ of analytic elements is the completion of $\mathbb{C}_p(z)$ with respect to the Gauss norm. The field $E_p$ is equipped with the derivation $\delta=z\frac{d}{dz}$.

Let $L$ be a differential operator in $E_p[\delta]$ and let $A$ be the companion matrix of $L$. We say that $L$ has a \emph{strong Frobenius structure} with period $h$ if there are an integer $h>0$ and a matrix $H\in\rm{ GL}_n(E_p)$ such that $$\delta H=AH-p^hHA(z^{p^h}).$$

The matrix $H$ is called a \emph{Frobenius transition matrix}  for the operator $L$.	

Note that $\mathbb{Q}(z)\subset E_p$.  According to \cite[Chap. 22, Theorem 22.2.1]{Kedlaya}\footnote{See also p. 351 of \cite{Kedlaya} and the references given there in.} or \cite[Chap. V, p. 111 ]{andre},  the \emph{Picard-Fuchs} equations over $\mathbb{Q}(z)$ have a strong Frobenius structure for almost every prime number $p$. In \cite{vmsff},  we proved that, under some reasonable conditions, the differential operators over $\mathbb{Q}(z)$ whose monodromy group is rigid have a strong Frobenius structure for almost every prime number $p$ (a similar result is proved by Crew in \cite{crew}). For example, there are a large number of hypergeometric operators with rational parameters that satisfy these hypotheses.  We remind the reader that for any couple of vectors $\underline{\alpha}=(\alpha_1,\ldots,\alpha_n)$ and $\underline{\beta}=(\beta_1,\ldots,\beta_{n-1},1)$ in $(\mathbb{Q}\setminus\mathbb{Z}_{\leq0})^n$, we associate the hypergeometric operator
 $$\mathcal{H}(\underline{\alpha},\underline{\beta})=\prod_{i=1}^{n}(\delta+\beta_i-1)-z\prod_{i=1}^{n}(\delta+\alpha_i)\in\mathbb{Q}[z][\delta],\quad\delta=z\frac{d}{dz}.$$
Now, let $R$ be the algebraic closure of $\mathbb{Q}(q)$, where $q$ is a variable, and let $\delta_q$ be the automorphism of $R(z)$ given by $\delta_q(f(q,z,))=\frac{f(q,qz)-f(q,z)}{q-1}$. Let us consider the $q$-hypergeometric operator  $$\mathcal{ H}_{\delta_q}(\underline{\alpha},\underline{\beta})=\prod_{i=1}^n\left(q^{\beta_i-1}\delta_q+\frac{q^{\beta_i-1}-1}{q-1}\right)-z\prod_{i=1}^n\left(q^{\alpha_i}\delta_q+\frac{q^{\alpha_i}-1}{q-1}\right)\in R(z)[\delta_q].$$

The operator $\mathcal{ H}_{\delta_q}(\underline{\alpha},\underline{\beta})$ is a $q$-analog of the operator $\mathcal{ H}(\underline{\alpha},\underline{\beta})$ because by replacing $\delta_q$ by $\delta$ and $q$ by 1 into $\mathcal{ H}_{\delta_q}(\underline{\alpha},\underline{\beta})$ we recover $\mathcal{ H}(\underline{\alpha},\underline{\beta})$.

In \cite{vmsff}, we proved that if $\alpha_i-\beta_j\notin\mathbb{Z}$ for all $i,j\in\{1,\ldots,n\}$ and if $p$ is a prime number such that the $p$-adic norm of $\alpha_i$ and $\beta_j$ is less than or equal to 1 for all $i,j\in\{1,\ldots,n\}$, then the hypergeometric operator $\mathcal{H}(\underline{\alpha},\underline{\beta})$ has a strong Frobenius structure for $p$. As we have previously mentioned, the $q$-hypergeometric operator $\mathcal{H}_{\delta_q}(\underline{\alpha},\underline{\beta})$ is a $q$-analog of the hypergeometric operator $\mathcal{H}(\underline{\alpha},\underline{\beta})$. Therefore, an interesting problem is to determine whether there is a notion of strong Frobenius structures associated with $q$-difference operators such that we can recover the strong Frobenius structure of $\mathcal{H}(\underline{\alpha},\underline{\beta})$ from the strong Frobenius structure of $\mathcal{H}_{\delta_q}(\underline{\alpha},\underline{\beta})$ when specializing $q=1$.

\subsection{ Strong Frobenius structure associated with $q$-difference operators: congruences and specialization}

 We want to define a notion of strong Frobenius structure associated with $q$-difference operators satisfying the following two properties. The first one deals with \emph{confluence} or \emph{specialization}. That is, we want to show that if the $q$-difference operator $L_q$ has a strong Frobenius structure for a prime $p$ with period $h$ and if $L$ is the $p$-adic differential operator obtained from $L_q$ by letting $q$ tend to 1, then $L$ has a strong Frobenius structure for $p$ with period $h$. The second one deals with congruence modulo cyclotomic polynomials. We want to show that if $f(q,z)\in\mathbb{Z}[q][[z]]$ is a solution of a $q$-difference operator having a strong Frobenius structure for $p$ then $f(q,z)$ satisfies some congruences modulo the $p$-th cyclotomic polynomial.\\ 
 At present, we give our definition of a strong Frobenius structure associated with $q$-difference operators.  The main ingredients in this definition are the ultrametric rings $\mathbb{Z}_{q,p}$ and $E_{\mathbb{Z},q,p}$, where $p$ is a prime number and $q$ is a variable. In Section~\ref{section_def_sff} we construct these rings. However, here we point out a crucial fact in the construction of these rings. Let $\mathfrak{m}$ be the ideal of $\mathbb{Z}[q]$ generated by $p$ and $1-q$, since $\mathbb{Z}[q]$ is a Noetherian ring it follows that for every non-zero $x\in\mathbb{Z}[q]$ there is a unique positive integer $n$ such that $x\in\mathfrak{m}^n\setminus\mathfrak{m}^{n+1}$, so that we set $|x|_{\mathfrak{m}}=\frac{1}{p^n}$. Then $|\cdot|_{\mathfrak{m}}$ is an ultrametric absolute value  on $\mathbb{Z}[q]$. This fact is proved in Section~\ref{section_def_sff}. The  ring $E_{\mathbb{Z},q,p}$ is a ring of functions and is endowed with the automorphisms $\sigma_q$ and $\delta_q$, where $\sigma_q(f(q,z))=f(q,qz)$ and $\delta_q(f(q,z))=\frac{f(q,qz)-f(q,z)}{q-1}$. It is also equipped with a Frobenius endomorphism $\textbf{F}_q$ such that $\textbf{F}_q(q)=q^p$. Furthermore, this ring verifies the following properties: the ring $\mathbb{Z}[q][z]$ is contained in $E_{\mathbb{Z},q,p}$ and there is a continuous homomorphism $\textbf{ev}:E_{\mathbb{Z},q,p}\rightarrow E_p\cap\mathbb{Z}_p[[z]]$ such that for every $P(q,z)\in\mathbb{Z}[q][z]$, $\textbf{ev}(P(q,z))=P(1,z)$. In our context, the homomorphism $\textbf{ev}$ represents the specialization of the variable $q$ at 1. Now consider the $q$-difference operator of order $n$
\begin{equation}\label{eq: q_diff_ope}
L_{\delta_q}:=\delta^n_q+b_{1}(z)\delta^{n-1}_q+\cdots+b_{n-1}(z)\delta_q+b_{n}(z)\in E_{\mathbb{Z},q,p}[\delta_q].
\end{equation}
We set
 \begin{equation}\label{eq:ev}
 \textbf{ev}(L_{\delta_q})=\delta^n+\textbf{ev}(b_1(z))\delta^{n-1}+\cdots+\textbf{ev}(b_{n-1}(z))\delta+\textbf{ev}(b_n(z)).
 \end{equation}
We say that ${L}_{\delta_q}$ is a $q$-analog of the differential operator $\textbf{ev}(L_{\delta_q})$.  For example, from Section~\ref{sec_qhyp1} we conclude that $\mathcal{H}_{\delta_q}(\underline{\alpha},\underline{\beta})\in E_{\mathbb{Z},q,p}[\delta_q]$ and $\textbf{ev}(\mathcal{H}_{\delta_q}(\underline{\alpha},\underline{\beta}))=\mathcal{H}(\underline{\alpha},\underline{\beta})$.

We associate to $L_{\delta_q}$ the following matrix \[B_q=\begin{pmatrix}
1 & (q-1) & 0 & \cdots & 0 \\
0 & 1 & (q-1) & \cdots & 0\\
\vdots & \vdots & \vdots & \cdots & \vdots\\
0 & 0 & 0 & \cdots & (q-1)\\
-(q-1)b_n(z) & -(q-1)b_{n-1}(z) & -(q-1)b_{n-2}(z) & \cdots & -(q-1)b_1+1\\ 
\end{pmatrix}.\]
Let $B_q^{\textbf{F}_q^h}\in\mathcal{M}_n(E_{\mathbb{Z},q,p})$ be the matrix obtained by applying $h$-times the automorphism $\textbf{F}_q$ to each entry of $B_q$. The matrix $B_q$ is called \emph{the matrix associated with $L_{\delta_q}$}. Here is our definition of \emph{strong Frobenius structure for a $q$-difference operator} with coefficients in $E_{\mathbb{Z},q,p}$.
\begin{defi}
	Let $L_{\delta_q}$ be as in \eqref{eq: q_diff_ope}  and $B_q$ be the matrix associated with $L_{\delta_q}$. We say that $L_{\delta_q}$ \emph{has a strong Frobenius structure} with period $h$ if there  are an integer $h>0$ and $H_q\in\rm{ GL}_n(E_{\mathbb{Z},q,p})$ such that
	\begin{equation}\label{expressionsff 13}
	\sigma_q(H_q)B^{\textbf{F}_q^h}_q=B_qH_q.
	\end{equation}
	The matrix $H_q$ is called a \emph{Frobenius transition matrix} for the operator $L_{\delta_q}$. 
\end{defi}
Equality~\eqref{expressionsff 13} is understood to mean that the $\sigma_q$-modules defined by the matrices $B^{\textbf{F}_q^h}_q$ and $B_q$ are isomorphic as $\sigma_q$-modules over $E_{\mathbb{Z},q,p}$.

Through this definition we want answer the following two questions.  The first one deals with  the \emph{confluence} property. Let $L_{\delta_q}\in E_{\mathbb{Z},q,p}[\delta_q]$ be as in \eqref{eq: q_diff_ope} and suppose that $L_{\delta_q}$ has a strong Frobenius structure with period $h$. Let $H_q$ be a Frobenius transition matrix for this operator, let $B$ be the companion matrix of $ \textbf{ev}(L_{\delta_q})$ and let $\textbf{ev}(H_q)$ be the matrix obtained by applying  $\textbf{ev}$ to each entry of $H_q$. Do we have the equality $\delta(\textbf{ev}(H_q))=B\textbf{ev}(H_q)-p^h\textbf{ev}(H_q)B(z^{p^h})$?\\
Since  $H_q\in\rm{ GL}_n(E_{\mathbb{Z},q,p})$ and $\textbf{ev}$ is a homomorphism of rings, the equality  $\delta(\textbf{ev}(H_q))=B\textbf{ev}(H_q)-p^h\textbf{ev}(H_q)B(z^{p^h})$ would imply that $ \textbf{ev}(L_{\delta_q})$  has a strong Frobenius structure with period $h$ and $\textbf{ev}(H_q)$ is a Frobenius transition matrix for the operator $\textbf{ev}(L_{\delta_q})$. For this reason, in case where this question has an affirmative answer we can recover the strong Frobenius structure of the differential operator $ \textbf{ev}(L_{\delta_q})$ from the strong Frobenius structure of the $q$-difference operator $L_{\delta_q}$ by applying $ \textbf{ev}$ to each entry of $H_q$. The second one deals with congruence modulo cyclotomic polynomial. In \cite{cyclotomique} some congruences involving the $p$-th cyclotomic polynomial are obtained for certain $q$-series in $\mathbb{Z}[q][[z]]$. Recall that the $p$-th cyclotomic polynomial is the polynomial $\phi_p(q)=1+q+\cdots+q^{p-1}$.  In \cite{cyclotomique}, the authors proved that there are many power series $f(q,z)$ in $1+z\mathbb{Z}[q][[z]]$ such that for every prime number $p$, there is a polynomial $A_p(q,z)$ in $\mathbb{Z}[q][z]$ whose degree in $z$ is less than $p$ such that $f(q,z)\equiv A_p(q,z)f(1,z^p)\bmod\phi_p(q)\mathbb{Z}[q][[z]]$. But  $q^p\equiv1\bmod\phi_p(q)$ and therefore,  \begin{equation}\label{eq: phi p lucas}
f(q,z)\equiv A_p(q,z)f(q^p,z^p)\bmod\phi_p(q)\mathbb{Z}[q][[z]].
\end{equation}
Hence, by substituting $q=1$ into \eqref{eq: phi p lucas}, we deduce that for all prime numbers $p$, 
\begin{equation}\label{eq:plucas}
f(1,z)\equiv A_p(1,z)f(1,z^p)\bmod p\mathbb{Z}[[z]].
\end{equation}


The relevance of the equations of type \eqref{eq:plucas} is supported from \cite{ABD2019}, where the authors gave  an algebraic independence criterion for power series that satisfy equations of type  \eqref{eq:plucas} for almost every prime number $p$.  Further, in \cite{cyclotomique} the authors gave an algebraic independence criterion for $q$-series that satisfy equations of type \eqref{eq: phi p lucas} for almost every prime number $p$. The criterion given in \cite{cyclotomique} is a phenomenon propagation of algebraic independence the $ \textbf{ev}(f_1(q,z)),\ldots,\textbf{ev}(f_r(q,z))$ to their $q$-analogs $f_1(q,z),\ldots, f_r(q,z)$. It turns out that  many of the power series $f(q,z)\in 1+z\mathbb{Z}[q][[z]]$ considered in \cite{cyclotomique} verify that $f(1,z)$ is a solution of a differential operator having a strong Frobenius structure for $p$. Therefore, according to \cite[Theorem 2.1]{vmsff},  if $n$ is the order of this differential operator and $h$ is the period of the strong Frobenius structure then, there are $a_{0}(z),\ldots, a_{n^2}(z)\in\mathbb{Z}[z]$ such that their reductions modulo $p$ are not all zero and
\begin{equation}\label{eq:thm2_1}
\sum_{i=0}^na_i(z)f(1,z^{p^{ih}})\equiv0\bmod p\mathbb{Z}[[z]].
\end{equation}
Recall that we can recover Equality~\eqref{eq:plucas} by substituting $q=1$ into \eqref{eq: phi p lucas}. A natural question in this direction is to determine whether there exist  $d_0(q,z),\ldots, d_{n^2}(q,z)\in\mathbb{Z}[q][z]$ such that 
\begin{equation}\label{eq_mudulo_cyclotomic_polynomial}
\sum_{i=0}^nd_i(q,z)f(q^{p^{ih}},z^{p^{ih}})=0\bmod\phi_p(q)\mathbb{Z}[q][[z]]
\end{equation}
and additionally that by substituting $q=1$ into \eqref{eq_mudulo_cyclotomic_polynomial} we can recover Equality~\eqref{eq:thm2_1}. \\
More precisely, let $f(q,z)\in\mathbb{Z}[q][[z]]$ be a solution of a $q$-difference operator of order $n$ having a strong Frobenius structure for $p$ with period $h$, then, by analogy with Theorem~2.1 of \cite{vmsff}, we ask the following question.\\
Are there $d_0(q,z),\ldots,d_n(q,z)\in\vartheta_{Frac(\mathbb{Z}_{q,p})}[[z]]$ such that $$\sum_{i=0}^nd_i(q,z)f(q^{p^{ih}},z^{p^{ih}})=0\bmod\phi_p(q)\mathbb{Z}[q][[z]]$$ and  by substituting $q=1$ into it we can recover an equality of the form \eqref{eq:thm2_1}?

\subsection{Main results}
Our main results, Theorem~\ref{confluence3} and Theorem~\ref{alg3}, put us in a position to answer affirmatively  to the two questions asked in the previous section.
\begin{theorem}\label{confluence3}
	Let $L_{\delta_q}=\delta^n_q+b_1(z)\delta^{n-1}_q+\cdots+b_{n-1}(z)\delta_q+b_n(z)$ be in $E_{\mathbb{Z},q,p}[\delta_q]$. If $L_{\delta_q}$ has a strong Frobenius structure with period $h$, then the differential operator $\textbf{ev}(L_{\delta_q})\in E_p[\delta]$ has a strong Frobenius structure with period $h$. Moreover, if $H_q$ is a  Frobenius transition matrix for the operator $L_{\delta_q}$, then $\textbf{ev}(H_q)$ is a Frobenius transition matrix for the operator $\textbf{ev}(L_{\delta_q})$.
\end{theorem}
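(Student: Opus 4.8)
The plan is to apply the specialization map $\textbf{ev}$ to the equation \eqref{expressionsff 13}, after rewriting each factor so that the first-order data needed for $\textbf{ev}(L_{\delta_q})$ survives the substitution $q=1$. The three elementary identities to record are: (i) $B_q=I_n+(q-1)A_q$, where $A_q$ is the companion matrix of $L_{\delta_q}$ in the $\delta_q$-sense, i.e. the matrix with last row $(-b_n(z),\dots,-b_1(z))$, ones on the superdiagonal and zeros elsewhere; this follows by inspection of the displayed matrix $B_q$, and by \eqref{eq:ev} one has $\textbf{ev}(A_q)=B$, the companion matrix of $\textbf{ev}(L_{\delta_q})$; (ii) since $\textbf{F}_q$ acts by $q\mapsto q^p$ and $z\mapsto z^p$, iterating gives $B_q^{\textbf{F}_q^h}=I_n+(q^{p^h}-1)A_q^{\textbf{F}_q^h}=I_n+(q-1)\,[p^h]_q\,A_q^{\textbf{F}_q^h}$, where $[p^h]_q:=(q^{p^h}-1)/(q-1)=1+q+\cdots+q^{p^h-1}$ satisfies $\textbf{ev}([p^h]_q)=p^h$ and $\textbf{ev}(A_q^{\textbf{F}_q^h})=B(z^{p^h})$; (iii) $\sigma_q(H_q)=H_q+(q-1)\delta_q(H_q)$, by definition of $\delta_q$.

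First I would substitute (i)--(iii) into \eqref{expressionsff 13}, expand both products, cancel the common summand $H_q$ from the two sides, and divide through by $q-1$, which is legitimate because $q-1$ is a non-zero-divisor in $E_{\mathbb{Z},q,p}$. This yields the matrix identity
\[
\delta_q(H_q)=A_qH_q-[p^h]_q\,H_q\,A_q^{\textbf{F}_q^h}-(q-1)\,[p^h]_q\,\delta_q(H_q)\,A_q^{\textbf{F}_q^h}
\]
in $\mathcal{M}_n(E_{\mathbb{Z},q,p})$. Then I would apply $\textbf{ev}$ entrywise, using the properties of the rings and maps built in Section~\ref{section_def_sff}: $\textbf{ev}$ is a ring homomorphism, $\textbf{ev}(q-1)=0$, $\textbf{ev}\circ\delta_q=\delta\circ\textbf{ev}$, and $\textbf{ev}(\textbf{F}_q^h(f))=\textbf{ev}(f)(z^{p^h})$ for $f\in E_{\mathbb{Z},q,p}$. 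The last term vanishes because of the factor $q-1$, and the identity collapses to
\[
\delta\bigl(\textbf{ev}(H_q)\bigr)=B\,\textbf{ev}(H_q)-p^h\,\textbf{ev}(H_q)\,B(z^{p^h}),
\]
which is exactly the equation defining a strong Frobenius structure of period $h$ for the operator $\textbf{ev}(L_{\delta_q})\in E_p[\delta]$.

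To conclude, one must check $\textbf{ev}(H_q)\in\mathrm{GL}_n(E_p)$. Since $H_q\in\mathrm{GL}_n(E_{\mathbb{Z},q,p})$, applying $\textbf{ev}$ entrywise to $H_qH_q^{-1}=I_n$ gives $\textbf{ev}(H_q)\,\textbf{ev}(H_q^{-1})=I_n$, so $\textbf{ev}(H_q)$ is invertible over $E_p\cap\mathbb{Z}_p[[z]]$, hence over $E_p$. Therefore $\textbf{ev}(H_q)$ is a Frobenius transition matrix for $\textbf{ev}(L_{\delta_q})$, as claimed.

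The only point demanding care is the compatibility $\textbf{ev}\circ\delta_q=\delta\circ\textbf{ev}$ (and the analogous one for $\textbf{F}_q$) for \emph{arbitrary} elements of $E_{\mathbb{Z},q,p}$, not only for polynomials. On $\mathbb{Z}[q][z]$ it is a one-line verification — for instance $\delta_q(z^k)=[k]_qz^k$, whence $\textbf{ev}(\delta_q(z^k))=kz^k=\delta(\textbf{ev}(z^k))$ — and it propagates to all of $E_{\mathbb{Z},q,p}$ by density of $\mathbb{Z}[q][z]$ together with continuity of $\textbf{ev}$, of $\delta_q$ on $E_{\mathbb{Z},q,p}$, of $\delta$ on $E_p$ (which is norm-decreasing), and of $\textbf{F}_q$. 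Granting these facts about the construction in Section~\ref{section_def_sff}, the proof reduces to the short computation above.
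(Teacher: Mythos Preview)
Your proof is correct and follows essentially the same route as the paper's: rewrite $\sigma_q(H_q)B_q^{\textbf{F}_q^h}=B_qH_q$ so that after cancelling the common $H_q$ and dividing by $q-1$ one obtains an identity in $E_{\mathbb{Z},q,p}$ to which $\textbf{ev}$ can be applied. The paper keeps the right-hand side in the form $\left(\frac{B_q-I_n}{q-1}\right)G_q-\sigma_q(G_q)\left(\frac{B_q^{\textbf{F}_q^h}-I_n}{q-1}\right)$ and uses $\textbf{ev}(\sigma_q(G_q))=\textbf{ev}(G_q)$; you expand $\sigma_q(H_q)=H_q+(q-1)\delta_q(H_q)$ one step further and kill the extra term via $\textbf{ev}(q-1)=0$. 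These are the same computation.

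One small correction to your final paragraph: $\mathbb{Z}[q][z]$ is \emph{not} dense in $E_{\mathbb{Z},q,p}$ for the Gauss norm (e.g.\ $\frac{1}{1-z}\in\mathbb{Z}_{q,p}[z]_S$ is at Gauss distance $1$ from every polynomial), so the density argument you sketch does not go through as stated. The compatibilities $\textbf{ev}\circ\delta_q=\delta\circ\textbf{ev}$ and $\textbf{ev}\circ\textbf{F}_q^h=\textbf{F}^h\circ\textbf{ev}$ are nonetheless valid on all of $E_{\mathbb{Z},q,p}\subset\mathbb{Z}_{q,p}[[z]]$ by the coefficientwise computation the paper carries out (using $\textbf{ev}([k]_q)=k$ and Proposition~\ref{prop: proprietes de E_q,p3}(5)), so your main argument stands once you invoke those facts directly rather than via density.
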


 Theorem~\ref{confluence3} answers the question regarding the confluence property and in Section~\ref{sub_recuperando}, by using Theorem~\ref{alg3}, we will give an affirmative answer to  the question regarding the congruences modulo cyclotomic polynomials.

\begin{theorem}\label{alg3}
	Let  $L_q$ be in $E_{\mathbb{Z},q,p}[\sigma_q]$ of order $n$ and $f(q,z)$ be in $\mathbb{Z}_{q,p}[[z]]$ a solution of $L_q$. Suppose that $L_q$ has a strong Frobenius structure with period $h$. Then there exist $d_0(q,z),\ldots,d_{n^2}(q,z)\in\vartheta_{Frac(\mathbb{Z}_{q,p})}[[z]]$  and a non-zero constant  $c\in\mathbb{Z}_{q,p}$ such that the polynomial  $d_0(q,z)Y_0+d_1(q,z)Y_1+\cdots+d_{n^2}(q,z)Y_{n^2}\bmod\phi_p(q)\vartheta_{Frac(\mathbb{Z}_{q,p})}$ is not zero, $cd_0(z),\ldots, cd_{n^2}(z)\in E_{\mathbb{Z},q,p}$, and
	\begin{equation}\label{genrale3}
	\sum_{i=0}^{n^2}d_i(q,z)\textbf{F}^{ih}_q(f(q,z))\equiv0\bmod\phi_p(q)\vartheta_{Frac(\mathbb{Z}_{q,p})}[[z]].
	\end{equation}	
\end{theorem}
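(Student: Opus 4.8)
The plan is to run the argument of \cite[Theorem 2.1]{vmsff} over $E_{\mathbb{Z},q,p}$ rather than over $E_p$, with $\phi_p(q)$ playing the role of $p$, the automorphism $\sigma_q$ the role of the derivation, and the Frobenius $\textbf{F}_q$ the role of $z\mapsto z^{p}$. Write $L_q=L_{\delta_q}$ through $\sigma_q=1+(q-1)\delta_q$, let $B_q$ be the associated matrix, and let $\mathbf{Y}$ be the column vector with entries $f,\delta_q(f),\dots,\delta_q^{n-1}(f)$, so that $\sigma_q(\mathbf{Y})=B_q\mathbf{Y}$. Two elementary facts about the modulus $\phi_p(q)$ are decisive. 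First, $q^{p^h}-1\equiv 0\bmod\phi_p(q)$ for $h\geq 1$, so, by continuity of the maps involved and the density of $\mathbb{Z}[q][z]$, one has $\textbf{F}_q^{ih}(x)\equiv\textbf{ev}(x)(z^{p^{ih}})\bmod\phi_p(q)E_{\mathbb{Z},q,p}$ for all $x\in E_{\mathbb{Z},q,p}$ and all $i\geq 1$; since $B_q=\mathrm{Id}_n+(q-1)A$ with $A$ the companion matrix in $\delta_q$, this gives $B_q^{\textbf{F}_q^{h}}\equiv\mathrm{Id}_n\bmod\phi_p(q)$ — the $q$-analogue of the congruence $\delta H\equiv AH\bmod p$ that makes the differential argument work — so, reducing $\sigma_q(H_q)B_q^{\textbf{F}_q^{h}}=B_qH_q$ modulo $\phi_p(q)$, the reduction $\overline{H_q}$ is a fundamental matrix of solutions of $\sigma_q(\mathbf{Z})=\overline{B_q}\mathbf{Z}$. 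Second, $\textbf{F}_q$ commutes with $\sigma_q$ (composed in either order they send $z$ to $q^{p}z^{p}$), so iterating the strong Frobenius relation produces, for each $k$, a matrix $\mathcal{H}_k:=H_q\,\textbf{F}_q^{h}(H_q)\cdots\textbf{F}_q^{(k-1)h}(H_q)\in\mathrm{GL}_n(E_{\mathbb{Z},q,p})$ with $\sigma_q(\mathcal{H}_k)B_q^{\textbf{F}_q^{kh}}=B_q\mathcal{H}_k$; hence each $\mathbf{U}_i:=\mathcal{H}_i\,\textbf{F}_q^{ih}(\mathbf{Y})$ solves $\sigma_q(\mathbf{Z})=B_q\mathbf{Z}$ and its first coordinate $g_i$ is a solution of $L_q$.

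The heart of the matter is the passage from the $g_i$ to the $\textbf{F}_q^{ih}(f)$. From the identity $\textbf{F}_q^{m}(\delta_q(g))=[p^{m}]_q^{-1}\,\delta_q(\textbf{F}_q^{m}(g))$, where $[p^{m}]_q=1+q+\cdots+q^{p^{m}-1}$ is $\delta_q$-constant, one sees that $g_i=P_i(\textbf{F}_q^{ih}(f))$ for an explicit $\delta_q$-operator $P_i$ of order $\leq n-1$ whose coefficients involve the entries of $\mathcal{H}_i$ and the inverse $[p^{ih}]_q^{-1}$; using $[p^{ih}k]_q/[p^{ih}]_q\in\mathbb{Z}[q]$ together with $[p^{ih}k]_q/[p^{ih}]_q\equiv k\bmod\phi_p(q)$, this operator reduces, modulo $\phi_p(q)$ and for $i\geq 1$, to $g_i\equiv\sum_{\ell=0}^{n-1}\overline{(\mathcal{H}_i)_{1,\ell+1}}\,(\delta^{\ell}\textbf{ev}(f))(z^{p^{ih}})$, while $g_0=f$. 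By Theorem~\ref{confluence3} the operator $\textbf{ev}(L_q)$ has a strong Frobenius structure with period $h$ and $\textbf{ev}(f)$ solves it, so the proof of \cite[Theorem 2.1]{vmsff} applies to $\textbf{ev}(L_q)$: it bounds the span of the functions $(\delta^{\ell}\textbf{ev}(f))(z^{p^{ih}})$, and hence that of the $\textbf{ev}(f)(z^{p^{ih}})$ for $0\leq i\leq n^2$, by $n^2$, furnishing in particular a non-trivial relation $\sum_{i=0}^{n^2}a_i(z)\textbf{ev}(f)(z^{p^{ih}})\equiv 0\bmod p$ with $a_i\in\mathbb{Z}[z]$. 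Combining this with the $i=0$ contribution handled by $\overline{H_q}$ and with the fact that the solution space of $L_q$ has dimension at most $n$ (so the vectors $\mathbf{U}_i$ are linearly dependent over $Frac(\mathbb{Z}_{q,p})$), one obtains a lift of the above relation: coefficients $d_i(q,z)\in\vartheta_{Frac(\mathbb{Z}_{q,p})}[[z]]$ with $\textbf{ev}(d_i)=a_i$ (the corrections to $a_i$ coming from the $q$-difference structure all vanish at $q=1$) and $\sum_{i=0}^{n^2}d_i(q,z)\textbf{F}_q^{ih}(f(q,z))\equiv 0\bmod\phi_p(q)\vartheta_{Frac(\mathbb{Z}_{q,p})}[[z]]$.

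The denominators of the $d_i$ come from the factors $[p^{ih}]_q^{-1}$, the inversion of $\overline{H_q}$, and Cramer's rule in the final dependence; all of them divide a fixed power of $\phi_p(q)$, so $c:=\phi_p(q)^{N}\in\mathbb{Z}_{q,p}$ with $N$ large enough satisfies $cd_0,\dots,cd_{n^2}\in E_{\mathbb{Z},q,p}$. For the non-vanishing: if $d_0(q,z)Y_0+\cdots+d_{n^2}(q,z)Y_{n^2}$ were zero modulo $\phi_p(q)\vartheta_{Frac(\mathbb{Z}_{q,p})}$, then applying $\textbf{ev}$ — under which $\phi_p(q)\mapsto p$ and $d_i\mapsto a_i$ — would force $a_0(z)Y_0+\cdots+a_{n^2}(z)Y_{n^2}$ to be zero modulo $p$, contradicting the non-triviality of the relation of \cite[Theorem 2.1]{vmsff}; hence $d_0(q,z)Y_0+\cdots+d_{n^2}(q,z)Y_{n^2}$ is not zero modulo $\phi_p(q)\vartheta_{Frac(\mathbb{Z}_{q,p})}$, and specializing at $q=1$ recovers an equality of the form \eqref{eq:thm2_1}. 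The step I expect to be the main obstacle is the one in the second paragraph: one must glue the reduction modulo $\phi_p(q)$ of the $q$-difference Frobenius structure to the $p$-adic differential Frobenius structure coming from confluence, and check that the operators $P_i$, together with their $[p^{ih}]_q^{-1}$-denominators, really do cut the problem down to dimension $n^2$ over $\vartheta_{Frac(\mathbb{Z}_{q,p})}$ with a uniformly bounded family of denominators, so that a single constant $c\in\mathbb{Z}_{q,p}$ clears all of them while the correction terms keep vanishing at $q=1$.
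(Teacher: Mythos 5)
Your route and the paper's diverge at the key step, and the step where they diverge is precisely the one you flag as the ``main obstacle'' and then do not resolve. The paper never descends to $q=1$. Working entirely on the $q$-difference side: the strong Frobenius structure gives (Proposition~\ref{actionfrob3}) an $\textbf{F}_q^h$-semilinear endomorphism $\psi(\vec{y})=H_q\,\textbf{F}_q^h(\vec{y})$ of $N(A_q)\subset\mathbb{Z}_{q,p}[[z]]^n$, and since $\dim_{Frac(\mathbb{Z}_{q,p})}\widehat{N(A_q)}\leq n$ (Proposition~\ref{dimensionker3}), one obtains a relation $\sum_{i=0}^n s_i\,\psi^{n-i}(\mathbf{f})=0$ with $s_i\in\mathbb{Z}_{q,p}$ not all zero. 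Unwinding the matrices $\mathcal{H}_k$ shows that the $Frac(E_{\mathbb{Z},q,p})$-span of $\{\textbf{F}_q^{jh}(\sigma_q^i f): j\geq 0,\ 0\leq i<n\}$ has dimension at most $n^2$, so $f,\textbf{F}_q^h(f),\ldots,\textbf{F}_q^{n^2h}(f)$ satisfy an \emph{exact} relation $\sum b_i\textbf{F}_q^{ih}(f)=0$ with $b_i\in E_{\mathbb{Z},q,p}$ not all zero. Dividing by a coefficient $c\in\mathbb{Z}_{q,p}$ of maximal Gauss norm among the $b_i$ forces $d_i:=b_i/c$ to satisfy $|d_i|_{\mathcal{G}}\leq 1$ with $\max_i|d_i|_{\mathcal{G}}=1$; this simultaneously gives $d_i\in\vartheta_{Frac(\mathbb{Z}_{q,p})}[[z]]$, $cd_i\in E_{\mathbb{Z},q,p}$, and the non-vanishing of $\sum d_i Y_i\bmod\phi_p(q)$. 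The stated congruence is then a trivial weakening of the exact identity.

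Your plan instead applies $\textbf{ev}$, invokes Theorem~2.1 of~\cite{vmsff} for $\textbf{ev}(L_q)$ to get a relation mod~$p$, and then claims to ``obtain a lift'' to a relation mod~$\phi_p(q)$ with $\textbf{ev}(d_i)=a_i$. No mechanism for the lift is provided; the parenthetical ``the corrections to $a_i$ coming from the $q$-difference structure all vanish at $q=1$'' describes the desired conclusion rather than establishing it. This is not a technicality you can defer: producing $q$-coefficients that both specialize correctly and satisfy a $\phi_p(q)$-congruence is exactly the content of the theorem. Moreover, your denominator accounting is inconsistent with the statement you are trying to prove: if the $d_i$ carry denominators that are powers of $\phi_p(q)$ and you clear them with $c=\phi_p(q)^N$, then $|d_i|_{\mathcal{G}}=p^N|cd_i|_{\mathcal{G}}$ can exceed $1$, so $d_i\notin\vartheta_{Frac(\mathbb{Z}_{q,p})}[[z]]$. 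In the paper's proof, $\phi_p(q)$ never appears as a denominator --- the $b_i$ live in $E_{\mathbb{Z},q,p}\subset\mathbb{Z}_{q,p}[[z]]$ from the outset, and $c$ is a Gauss-norm normalizer, not a power of $\phi_p(q)$. The direct dimension argument over $Frac(E_{\mathbb{Z},q,p})$, bypassing confluence entirely, is what makes the proof work.
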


\subsection{Structure of the article}
In Section~\ref{section_def_sff}, we construct the ultrametric rings $\mathbb{Z}_{q,p}$ and $E_{\mathbb{Z},q,p}$. In Section~\ref{sec_proof}, we prove Theorem~\ref{confluence3} and Theorem~\ref{alg3}. In Section~\ref{sec_qhyp1}, we show that some $q$-hypergoemetric operators of order 1 have a strong Frobenius structure for infinitely many prime numbers $p$. In \cite{anredivzio}, André and Di Vizio  gave also a definition of strong Frobenius structure associated with $q$-difference operators. In Section~\ref{sec_andre_divizio}, we point out why this definition does not seem to be sufficient to answer the questions asked in this introduction. 

\section{ A strong Frobenius structure associated with $q$-difference operators}\label{section_def_sff}

We begin by remembering the definition of a Frobenius endomorphism. Let $A$ be an ultrametric ring of characteristic zero, let $\vartheta_A$ be the set of  $x\in A$ with $|x|\leq1$ and let $\mathcal{I}$ be the set of  $x\in A$ with $|x|<1$. Since the norm is ultremetric, the set $\vartheta_A$ is a local ring whose maximal ideal is  $\mathcal{I}$. \emph{The residue ring of $A$} is the ring $k:=\vartheta_A/\mathcal{I}$. We suppose that $k$ has prime characteristic $p>0$, that is, we suppose that the prime number $p$ belongs to $\mathcal{I}$. For $|x|\leq1$ we denote by $\overline{x}$ the residual class of $x$ in $k$. 
\begin{defi}
	Let $A$ be an ultrametric ring and $k$ its residue ring. We assume that the characteristic of $k$ is $p>0$. We say that $\tau:A\rightarrow A$ is a Frobenius endomorphism if:
	\begin{enumerate}
		\item The endomorphism $\tau$ is continuous.
		\item For all $x\in\vartheta_A$, $|\tau(x)-x^p|<1$. That is, the endomorphism $\overline{\tau}:k\rightarrow k$ given by $\overline{\tau}(\overline{x})=\overline{\tau(x)}$ is the Frobenius endomorphism of $k$.
	\end{enumerate}
\end{defi}
Consider the ring $\mathbb{Z}[q]$, where $q$ is a variable and let  $\textbf{ev}:\mathbb{Z}[q]\rightarrow\mathbb{Z}$ be the homomorphism given by $\textbf{ev}(P(q))=P(1)$. 

From now on, we will fix the prime number $p>0$ and the ideal $\mathfrak{m}=(p,1-q)\mathbb{Z}[q]$.

\begin{rema}\label{noyau3}
	Let $\mathbb{F}_p$ be the field with $p$ elements. The ideal $\mathfrak{m}$ is a maximal ideal of $\mathbb{Z}[q]$ because $\mathfrak{m}$ is the kernel of the surjective homomorphism $\pi\circ\textbf{ev}:\mathbb{Z}[q]\rightarrow\mathbb{F}_p$, where $\pi:\mathbb{Z}\rightarrow\mathbb{F}_p$ is the homomorphism given by $\pi(n)=n\bmod p$.
\end{rema}
Since the ring $\mathbb{Z}[q]$ is Noetherian and $\mathfrak{m}\neq (1)$, it follows from the corollary 10.18 of  \cite{atiyah} that $\bigcap_{n\geq0}\mathfrak{m}^n=\{0\}$. Thus, if $x\in\mathbb{Z}[q]$ is not zero, there is a unique positive integer $n$ such that $x\in\mathfrak{m}^{n}$ and $x\notin\mathfrak{m}^{n+1}$. We set $$| x|_{\mathfrak{m}}=\frac{1}{p^n}.$$
According to the following proposition, we see that $|\cdot|_{\mathfrak{m}}$ is an ultrametric absolute value  on $\mathbb{Z}[q]$.
 
\begin{prop}\label{prop_m_adique3}
	Let $x,y$ be in $\mathbb{Z}[q]$, then:
	\begin{enumerate}
		\item $|x|_{\mathfrak{m}}=0$ if and only if $x=0$.
		\item $|x+y|_{\mathfrak{m}}\leq\max\{|x|_{\mathfrak{m}},| y|_{\mathfrak{m}}\}$
		\item $|xy|_{\mathfrak{m}}=|x|_{\mathfrak{m}}| y|_{\mathfrak{m}}$.
	\end{enumerate}
\end{prop}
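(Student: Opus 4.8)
The plan is to verify the three axioms of an ultrametric absolute value directly from the definition $|x|_{\mathfrak{m}} = p^{-n}$, where $n = n(x)$ is the unique integer with $x \in \mathfrak{m}^n \setminus \mathfrak{m}^{n+1}$ (with the convention $n(0) = +\infty$, so $|0|_{\mathfrak{m}} = 0$). The whole argument rests on one structural fact about the $\mathfrak{m}$-adic filtration: the associated graded ring $\mathrm{gr}_{\mathfrak{m}}(\mathbb{Z}[q]) = \bigoplus_{n\ge 0} \mathfrak{m}^n/\mathfrak{m}^{n+1}$ is an integral domain. Once this is known, everything follows formally.

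First, part (1) is immediate: $|x|_{\mathfrak{m}} = 0$ means $n(x) = \infty$, i.e. $x \in \bigcap_n \mathfrak{m}^n = \{0\}$ by Krull's intersection theorem (already invoked in the excerpt via Corollary 10.18 of \cite{atiyah}), so $x = 0$; conversely $|0|_{\mathfrak{m}} = 0$ by convention. For part (2), the ultrametric inequality: if $x \in \mathfrak{m}^{n(x)}$ and $y \in \mathfrak{m}^{n(y)}$ then with $m = \min\{n(x), n(y)\}$ both lie in $\mathfrak{m}^m$, hence $x + y \in \mathfrak{m}^m$, so $n(x+y) \ge m$ and therefore $|x+y|_{\mathfrak{m}} \le p^{-m} = \max\{|x|_{\mathfrak{m}}, |y|_{\mathfrak{m}}\}$; the degenerate cases where $x$, $y$, or $x+y$ is zero are handled trivially by the convention.

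The substantive point is part (3), multiplicativity. The inequality $|xy|_{\mathfrak{m}} \le |x|_{\mathfrak{m}}|y|_{\mathfrak{m}}$ is easy, since $x \in \mathfrak{m}^{n(x)}$ and $y \in \mathfrak{m}^{n(y)}$ force $xy \in \mathfrak{m}^{n(x)+n(y)}$, hence $n(xy) \ge n(x) + n(y)$. For the reverse inequality one must show $xy \notin \mathfrak{m}^{n(x)+n(y)+1}$ when $x, y \ne 0$. Consider the images $\bar{x} \in \mathfrak{m}^{n(x)}/\mathfrak{m}^{n(x)+1}$ and $\bar{y} \in \mathfrak{m}^{n(y)}/\mathfrak{m}^{n(y)+1}$, which are nonzero homogeneous elements of $\mathrm{gr}_{\mathfrak{m}}(\mathbb{Z}[q])$ of degrees $n(x)$ and $n(y)$. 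Their product in the graded ring is the image of $xy$ in $\mathfrak{m}^{n(x)+n(y)}/\mathfrak{m}^{n(x)+n(y)+1}$; if the graded ring is a domain this product is nonzero, which is exactly the statement $xy \notin \mathfrak{m}^{n(x)+n(y)+1}$, giving $n(xy) = n(x)+n(y)$ and hence $|xy|_{\mathfrak{m}} = |x|_{\mathfrak{m}}|y|_{\mathfrak{m}}$.

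So the main obstacle is proving that $\mathrm{gr}_{\mathfrak{m}}(\mathbb{Z}[q])$ is an integral domain. Here one uses that $\mathfrak{m} = (p, 1-q)$ and that $\mathbb{Z}[q]/\mathfrak{m} = \mathbb{F}_p$ (Remark~\ref{noyau3}), while more precisely the generators $p$ and $1-q$ behave like independent variables: I would show that $\mathrm{gr}_{\mathfrak{m}}(\mathbb{Z}[q])$ is isomorphic to the polynomial ring $\mathbb{F}_p[X, Y]$ in two indeterminates $X, Y$ corresponding to the initial forms of $p$ and $1-q$. Concretely, there is a surjection $\mathbb{F}_p[X,Y] \twoheadrightarrow \mathrm{gr}_{\mathfrak{m}}(\mathbb{Z}[q])$ sending $X \mapsto \overline{p}$, $Y \mapsto \overline{1-q}$; it is injective because an $\mathbb{F}_p$-basis for $\mathfrak{m}^n/\mathfrak{m}^{n+1}$ is given by the $n+1$ monomials $\overline{p^i(1-q)^{n-i}}$, $0 \le i \le n$ — this can be checked by writing any element of $\mathbb{Z}[q]$ in the basis $(1-q)^j$ over $\mathbb{Z}$, tracking $p$-adic valuations of the coefficients, and counting dimensions, or by noting that $\mathbb{Z}[q]$ localized and completed at $\mathfrak{m}$ is a regular local ring of dimension $2$ with regular system of parameters $p, 1-q$, whose associated graded ring is then automatically a polynomial ring in two variables. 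Since $\mathbb{F}_p[X,Y]$ is a domain, so is $\mathrm{gr}_{\mathfrak{m}}(\mathbb{Z}[q])$, and part (3) — indeed the sharper equality $n(xy) = n(x) + n(y)$ — follows. Alternatively, and perhaps more cleanly for a self-contained argument, one can bypass the graded ring entirely: reduce to showing that if $x \notin \mathfrak{m}^{a+1}$ and $y \notin \mathfrak{m}^{b+1}$ then $xy \notin \mathfrak{m}^{a+b+1}$, and prove this by expanding $x = \sum_i c_i (1-q)^i$, $y = \sum_j d_j (1-q)^j$ with $c_i, d_j \in \mathbb{Z}$, observing that $x \in \mathfrak{m}^{a}\setminus\mathfrak{m}^{a+1}$ is equivalent to $\min_i(v_p(c_i) + i) = a$ (where $v_p$ is the $p$-adic valuation on $\mathbb{Z}$), and then checking that this "tropical" valuation is additive under multiplication exactly as for the Gauss norm.
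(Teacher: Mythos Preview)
Your argument is correct. Parts (1) and (2) match the paper's proof. For part (3), however, you take a genuinely different route: you reduce multiplicativity to the associated graded ring $\mathrm{gr}_{\mathfrak{m}}(\mathbb{Z}[q])$ being a domain, and establish the latter by identifying it with $\mathbb{F}_p[X,Y]$ via the regularity of $\mathbb{Z}[q]_{\mathfrak{m}}$. The paper instead works by hand: it proves an auxiliary lemma (Lemma~\ref{lem_auxiliaure}) by induction on $n$, showing that if $\sum_{i=0}^n a_i(q)p^{n-i}(1-q)^i \in \mathfrak{m}^{n+1}$ then every $a_i(q)\in\mathfrak{m}$, and then runs a minimal-index argument in the spirit of Gauss's lemma to conclude $xy\notin\mathfrak{m}^{n+m+1}$. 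Your regularity argument is cleaner and explains structurally why the result holds (and generalizes to any regular local ring), while the paper's approach is fully elementary and self-contained, requiring no input from the theory of regular rings. Note that your third ``tropical'' alternative, asserting $n(x)=\min_i(v_p(c_i)+i)$ for $x=\sum_i c_i(1-q)^i$, is essentially a reformulation of the paper's Lemma~\ref{lem_auxiliaure}: the nontrivial direction (that $x\in\mathfrak{m}^a$ forces $v_p(c_i)+i\ge a$ for all $i$) is exactly what that lemma provides.
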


We prove  Proposition~\ref{prop_m_adique3} at the end of this section. So, the ring $\mathbb{Z}[q]$ is equipped with an ultrametric absolute value which we call $\mathfrak{m}$-adic norm. In this case, it is clear that: $\vartheta_{\mathbb{Z}[q]}=\mathbb{Z}[q]$ and the ideal maximal of $\vartheta_{\mathbb{Z}[q]}$ is $\mathfrak{m}\mathbb{Z}[q]$. Thus, according to Remark~\ref{noyau3}, the residue ring of $\mathbb{Z}[q]$ is $\mathbb{F}_p$.

\begin{prop}\label{e3}
	Let $\mathbb{Z}[q]$ be endowed with the $\mathfrak{m}$-adic norm and let $\mathbb{Z}$ be endowed with the $p$-adic norm. Then the homomorphism $\textbf{ev}:\mathbb{Z}[q]\rightarrow\mathbb{Z}$ is continuous, and for all $P(q)\in\mathbb{Z}[q]$, $|\textbf{ev}(P(q))|_p\leq|P(q)|_{\mathfrak{m}}$.
\end{prop}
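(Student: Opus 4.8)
The plan is to exploit directly the definitions of the two norms in terms of the relevant ideals, together with the fact that $\mathbf{ev}$ sends the ideal $\mathfrak{m}$ into the ideal $p\mathbb{Z}$. First I would record the trivial case: if $P(q)=0$ then $\mathbf{ev}(P(q))=0$ and both sides of the inequality vanish. So assume $P(q)\neq 0$. By the construction preceding the proposition there is a unique positive integer $n$ with $P(q)\in\mathfrak{m}^{n}\setminus\mathfrak{m}^{n+1}$, so $|P(q)|_{\mathfrak{m}}=p^{-n}$. It then suffices to show that $\mathbf{ev}(P(q))\in p^{n}\mathbb{Z}$, for this gives $|\mathbf{ev}(P(q))|_{p}\leq p^{-n}=|P(q)|_{\mathfrak{m}}$ regardless of whether $\mathbf{ev}(P(q))$ is zero or a higher power of $p$ divides it.

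The key step is the containment $\mathbf{ev}(\mathfrak{m}^{n})\subseteq p^{n}\mathbb{Z}$, which I would prove by first establishing $\mathbf{ev}(\mathfrak{m})\subseteq p\mathbb{Z}$ and then using multiplicativity of powers of an ideal. Indeed $\mathfrak{m}=(p,1-q)\mathbb{Z}[q]$, so a general element of $\mathfrak{m}$ has the form $p\,a(q)+(1-q)\,b(q)$ with $a,b\in\mathbb{Z}[q]$; applying $\mathbf{ev}$, which is a ring homomorphism with $\mathbf{ev}(q)=1$, yields $p\,a(1)+(1-1)\,b(1)=p\,a(1)\in p\mathbb{Z}$. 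Hence $\mathbf{ev}(\mathfrak{m})\subseteq p\mathbb{Z}$. Since $\mathfrak{m}^{n}$ is generated as an ideal by $n$-fold products $x_{1}\cdots x_{n}$ with each $x_{i}\in\mathfrak{m}$ — more precisely, every element of $\mathfrak{m}^{n}$ is a finite $\mathbb{Z}[q]$-linear combination of such products — and $\mathbf{ev}$ is multiplicative, each such product maps into $p\mathbb{Z}\cdot p\mathbb{Z}\cdots p\mathbb{Z}=p^{n}\mathbb{Z}$, and a $\mathbb{Z}[q]$-linear combination of elements of $p^{n}\mathbb{Z}$ lands (after applying $\mathbf{ev}$) in $\mathbf{ev}(\mathbb{Z}[q])\cdot p^{n}\mathbb{Z}\subseteq \mathbb{Z}\cdot p^{n}\mathbb{Z}=p^{n}\mathbb{Z}$. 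Applying this with $P(q)\in\mathfrak{m}^{n}$ gives $\mathbf{ev}(P(q))\in p^{n}\mathbb{Z}$, i.e. $|\mathbf{ev}(P(q))|_{p}\leq p^{-n}=|P(q)|_{\mathfrak{m}}$, which is the asserted inequality.

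Finally, continuity of $\mathbf{ev}$ is an immediate consequence: the inequality $|\mathbf{ev}(P(q))|_{p}\leq|P(q)|_{\mathfrak{m}}$ shows $\mathbf{ev}$ is $1$-Lipschitz, hence continuous, for the $\mathfrak{m}$-adic topology on the source and the $p$-adic topology on the target. I do not expect any serious obstacle here; the only point requiring a little care is the description of a general element of $\mathfrak{m}^{n}$ and the bookkeeping that $\mathbf{ev}$, being a homomorphism that kills $1-q$, carries the generators $p$ and $1-q$ of $\mathfrak{m}$ into $p\mathbb{Z}$ and $0$ respectively — everything else is formal manipulation with ideals and the multiplicativity of the homomorphism.
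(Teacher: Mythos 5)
Your proof is correct and follows essentially the same route as the paper: both establish $\mathbf{ev}(\mathfrak{m}^n)\subseteq p^n\mathbb{Z}$ and deduce the norm inequality and Lipschitz continuity from it. The only difference is cosmetic — the paper writes an element of $\mathfrak{m}^n$ explicitly as $\sum_{i=0}^n a_i(q)\,p^{n-i}(1-q)^i$ and evaluates at $q=1$, whereas you argue more abstractly via $\mathbf{ev}(\mathfrak{m})\subseteq p\mathbb{Z}$ and multiplicativity of ideal powers.
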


\begin{proof}
	Let $P(q)$ be in $\mathbb{Z}[q]$. Suppose that $|P(q)|_{\mathfrak{m}}=1/p^n$, then $P(q)\in\mathfrak{m}^n\setminus\mathfrak{m}^{n+1}$. Since $\mathfrak{m}^n$ is generated by the family $\{p^{n-i}(1-q)^i\}_{0\leq i\leq n}$, $P(q)=\sum_{i=0}^n a_i(q)p^{n-i}(1-q)^i$ where $a_0(q),\ldots, a_n(q)$ belong to $\mathbb{Z}[q]$. Hence, $\textbf{ev}(P(q))=P(1)=a_0(1)p^n$ and for this reason, $|P(1)|_p\leq 1/p^n$. Therefore, for all $P(q)\in\mathbb{Z}[q]$,
	\begin{equation}\label{econtinu}
	|\textbf{ev}(P(q))|_{p}\leq|P(q)|_{\mathfrak{m}}.
	\end{equation}
	Since $\textbf{ev}$ is a homomorphism of rings, this last inequality implies that $\textbf{ev}$ is continuous.
\end{proof}

Now, let us consider the endomorphism  $\textbf{F}_q:\mathbb{Z}[q]\rightarrow\mathbb{Z}[q]$ given by $\textbf{F}_q(P(q))=P(q^p)$.

\begin{prop}\label{frob3}
	The endomorphism $\textbf{F}_q:\mathbb{Z}[q]\rightarrow\mathbb{Z}[q]$ is a Frobenius endomorphism, and for all $P(q)\in\mathbb{Z}[q]$, $|\textbf{F}_q(P(q))|_{\mathfrak{m}}\leq|P(q)|_{\mathfrak{m}}.$
\end{prop}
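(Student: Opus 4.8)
The plan is to verify the two assertions of Proposition~\ref{frob3} separately, starting with the norm inequality, since continuity of $\textbf{F}_q$ will then follow automatically from the fact that it is a ring homomorphism (exactly as in the proof of Proposition~\ref{e3}). So the real content is: for every non-zero $P(q)\in\mathbb{Z}[q]$, if $P(q)\in\mathfrak{m}^n$ then $\textbf{F}_q(P(q))=P(q^p)\in\mathfrak{m}^n$.

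The key observation is that it suffices to show $\textbf{F}_q(\mathfrak{m})\subseteq\mathfrak{m}$, because then $\textbf{F}_q(\mathfrak{m}^n)\subseteq\mathfrak{m}^n$ follows by multiplicativity of the endomorphism together with the fact that $\mathfrak{m}^n$ is generated (as an ideal) by $n$-fold products of elements of $\mathfrak{m}$; applying $\textbf{F}_q$ to such a product of generators lands in the ideal generated by $\textbf{F}_q$ of each factor, hence in $\mathfrak{m}^n$. Concretely, $\mathfrak{m}=(p,1-q)$, and $\textbf{F}_q(p)=p\in\mathfrak{m}$, while $\textbf{F}_q(1-q)=1-q^p$. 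Now $1-q^p=(1-q)(1+q+\cdots+q^{p-1})$, so $1-q^p\in(1-q)\mathbb{Z}[q]\subseteq\mathfrak{m}$. This gives $\textbf{F}_q(\mathfrak{m})\subseteq\mathfrak{m}$, and hence $|\textbf{F}_q(P(q))|_{\mathfrak{m}}\leq|P(q)|_{\mathfrak{m}}$ for all $P(q)$. (A cleaner way to package this: $\textbf{F}_q$ descends to an endomorphism of the graded ring $\bigoplus_n\mathfrak{m}^n/\mathfrak{m}^{n+1}$, which is what the inequality says.)

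For the Frobenius property, I must check the two conditions in the definition. Continuity: since $\textbf{F}_q$ is a ring homomorphism and satisfies $|\textbf{F}_q(P)|_{\mathfrak{m}}\le|P|_{\mathfrak{m}}$, it is contractive, hence continuous. For the congruence condition: the residue ring of $\mathbb{Z}[q]$ is $\mathbb{F}_p$ (with reduction map $\pi\circ\textbf{ev}$, by Remark~\ref{noyau3}), and $\vartheta_{\mathbb{Z}[q]}=\mathbb{Z}[q]$. For $x=P(q)\in\mathbb{Z}[q]$ I need $|\textbf{F}_q(P(q))-P(q)^p|_{\mathfrak{m}}<1$, i.e.\ $P(q^p)-P(q)^p\in\mathfrak{m}$. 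This is immediate from the reduction map: $\overline{P(q^p)}=\pi(\textbf{ev}(P(q^p)))=\pi(P(1))$ and $\overline{P(q)^p}=\pi(P(1))^p=\pi(P(1))$ in $\mathbb{F}_p$ (by Fermat's little theorem, since $\pi(P(1))\in\mathbb{F}_p$), so the two residues agree and the difference lies in $\mathfrak{m}$. Alternatively one can argue directly that $P(q^p)\equiv P(q)^p\bmod p\mathbb{Z}[q]\subseteq\mathfrak{m}$ by the Frobenius-on-coefficients identity $\big(\sum a_iq^i\big)^p\equiv\sum a_i^pq^{ip}\equiv\sum a_iq^{ip}\bmod p$.

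I do not expect a genuine obstacle here; the only point requiring a small amount of care is the reduction from $\textbf{F}_q(\mathfrak{m}^n)\subseteq\mathfrak{m}^n$ to the statement for a single element $P(q)$ with $|P(q)|_{\mathfrak{m}}=1/p^n$ — but this is exactly the definition of $|\cdot|_{\mathfrak{m}}$, and the inclusion $\textbf{F}_q(\mathfrak{m}^n)\subseteq\mathfrak{m}^n$ is all that is needed (we do not need an equality $|\textbf{F}_q(P)|_{\mathfrak{m}}=|P|_{\mathfrak{m}}$, only the inequality, which is good because it genuinely can be strict, e.g.\ near $q=1$ the element $1-q$ maps to $1-q^p$ which for $p=2$ lies in $\mathfrak{m}$ but not $\mathfrak{m}^2$, so the norm is actually preserved there, whereas the multiplicativity of Proposition~\ref{prop_m_adique3}(3) does not transfer through $\textbf{F}_q$). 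The factorization $1-q^p=(1-q)\phi_p(q)$ with $\phi_p$ the $p$-th cyclotomic polynomial is the one computational input, and it is trivial.
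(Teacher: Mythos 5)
Your proof is correct and follows essentially the same route as the paper: reduce the norm inequality to $\textbf{F}_q(\mathfrak{m})\subseteq\mathfrak{m}$ via the factorization $1-q^p=(1-q)\phi_p(q)$, lift to $\mathfrak{m}^n$ by multiplicativity, deduce continuity, and verify the residual Frobenius property by applying $\pi\circ\textbf{ev}$ to $P(q^p)-P(q)^p$ and invoking Fermat's little theorem; the paper states the inclusion $\textbf{F}_q(\mathfrak{m})\subseteq\mathfrak{m}$ without spelling out the generator check, but the mechanism is identical. One small factual slip in your parenthetical aside: for $p=2$ the element $1-q^2=(1-q)(1+q)$ does lie in $\mathfrak{m}^2$, since $1+q=2-(1-q)\in\mathfrak{m}$; more generally $\phi_p(1)=p$ forces $\phi_p(q)\in\mathfrak{m}$, so $|\textbf{F}_q(1-q)|_{\mathfrak{m}}\le 1/p^2$ and the inequality for $1-q$ is always strict, which in fact reinforces (rather than contradicts) the need to prove only an inequality.
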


\begin{proof}	
	Let us show that $\textbf{F}_q$ is continuous. Note that, for all integers $n>0$, we have $\textbf{F}_q(\mathfrak{m}^n\mathbb{Z}[q])\subset\mathfrak{m}^n\mathbb{Z}[q]$ because $\textbf{F}_q(\mathfrak{m}\mathbb{Z}[q])\subset\mathfrak{m}\mathbb{Z}[q]$ and $\textbf{F}_q$ is an endomorhism of rings. Then,  $$|\textbf{F}_q(P(q))|_{\mathfrak{m}}=|P(q^{p})|_{\mathfrak{m}}\leq|P(q)|_{\mathfrak{m}}.$$ 
	Since $\textbf{F}_q$ is an endomorphism of rings, from this last inequality it follows that $\textbf{F}_q$ is continuous.
	
	Now, we show that for all $x\in\mathbb{Z}[q]$,  $|\textbf{F}_q(x)-x^p|_{\mathfrak{m}}<1$. Let $x=P(q)$ be in $\mathbb{Z}[q]$, then $$\textbf{F}_q(x)-x^p=P(q^{p})-P(q)^p.$$
	As $\pi_p\circ\textbf{ev}(P(q^{p})-P(q)^p)=0$ then, according to Remark~\ref{noyau3}, $P(q^{p})-P(q)^p\in\mathfrak{m}$. Therefore, $$|\textbf{F}_q(x)-x^p|_{\mathfrak{m}}<1.$$
	Then, we conclude that $ \textbf{F}_q$ is a Frobenius endomorphism.
\end{proof}

We are going to explain our motivation for considering the Frobenius endomorphism $\textbf{F}_q:\mathbb{Z}[q]\rightarrow\mathbb{Z}[q]$ given by $\textbf{F}_q(q)=q^p$. We recall that the endomorphism $\textbf{F}:\mathbb{Z}[[z]]\rightarrow\mathbb{Z}[[z]]$ given by $\textbf{F}(z)=z^p$ is the unique Frobenius endomorphism of $\mathbb{Z}[[z]]$\footnote{Here the ring $\mathbb{Z}[[z]]$ is equipped with the $p$-adic Gauss norm, $ \left|\sum_{n\geq0}a_nz^n\right|_{p,\mathcal{G}}=\sup\{|a_n|_p\}_{n\geq0}$.} such that $z\mapsto z^p$. We set $\delta=z\frac{d}{dz}$, then for all integers $h>0$, $\textbf{F}$ is the unique endomorphism of $\mathbb{Z}[[z]]$ such that
\begin{equation}\label{eq_egal_frob}
\delta\circ\textbf{F}^h=p^h\textbf{F}^h\circ\delta.
\end{equation} 
We consider the following homomorphisms: $$\textbf{ev}:\mathbb{Z}[q][[z]]\rightarrow\mathbb{Z}[[z]]\quad\textup{,}\quad \textbf{ev}\left(\sum_{n\geq0}P_n(q)z^n\right)=\sum_{n\geq0}P_n(1)z^n$$ and $$\delta_q:\mathbb{Z}[q][[z]]\rightarrow\mathbb{Z}[q][[z]] \quad\textup{,}\quad\delta_q\left(\sum_{n\geq0}P_n(q)z^n\right)=\sum_{n\geq0}(1+q+\cdots+q^{n-1})P_n(q)z^n.$$ We say that $f(q,z)\in\mathbb{Z}[q][[z]]$ is a $q$-analog of $g(z)\in\mathbb{Z}[[z]]$ if $\textbf{ev}(f(q,z))=g(z)$. In particular, the polynomial $[p^h]_q=1+q+\cdots+q^{{p^h}-1}$ is a $q$-analog of $p^h$. Since,  for $f(q,z)\in\mathbb{Z}[q][[z]]$, $\textbf{ev}(\delta_q(f(q,z))=\delta(\textbf{ev}(f(q,z))$, it follows that  
 $\delta_q(f(q,z))$ is a $q$-analog of $\delta(f(1,z))$. 
 
 \smallskip
 
 It turns out that $\textbf{F}_q:\mathbb{Z}[q][[z]]\rightarrow\mathbb{Z}[q][[z]]$ given by $\textbf{F}_q(\sum_{n\geq0}P_n(q)z^n)=\sum_{n\geq0}P_n(q^p)z^{np}$ is the unique endomorphism of $\mathbb{Z}[q][[z]]$  such that, for every integer $h>0$, we have 
\begin{equation}\label{eq_q_analogue_egal_frob}
\textbf{ev}\circ\delta_q\circ\textbf{F}_q^h=\delta\circ\textbf{F}^h\circ\textbf{ev}.
\end{equation} 
And additionally, $\textbf{ev}\circ[p^h]_q\textbf{F}_q^h\circ\delta_q=p^h\textbf{F}^h\circ\delta\circ\textbf{ev}$, and $\textbf{ev}\circ[p^h]_q\textbf{F}_q^h\circ\delta_q=p^h\textbf{F}^h\circ\delta\circ\textbf{ev}$. These last two equalities imply that \eqref{eq_q_analogue_egal_frob} is a $q$-analog of \eqref{eq_egal_frob}.

The field $\mathbb{Q}(q)$ is the field of fractions of $\mathbb{Z}[q]$. The $\mathfrak{m}$-adic norm extends to $\mathbb{Q}(q)$ in a unique way. It extends as follows: let $x$ be in $\mathbb{Q}(q)$, then there are $x_1,x_2\in\mathbb{Z}[q]$ such that $x=\frac{x_1}{x_2}$. We set $|x|_{\mathfrak{m}}=\frac{| x_1|_{\mathfrak{m}}}{|x_2|_{\mathfrak{m}}}$. The value of $|x|_{\mathfrak{m}}$ does not depend on the writing of $x$. Thus,  $\mathbb{Q}(q)$ is endowed with the ultrametric norm $|\cdot|_{\mathfrak{m}}$ which we will call $\mathfrak{m}$-adic norm. According to Proposition~\ref{frob3}, we know that $\textbf{F}_q:\mathbb{Z}[q]\rightarrow\mathbb{Z}[q]$ is a Frobenius endomorphism. A natural question is to determine whether $\textbf{F}_q$ extends to a Frobenius endomorphism of $\mathbb{Q}(q)$. Unfortunately, that is not the case. In fact,  we are going to see that if that $\tau$ is a Frobenius endomorphism of $\mathbb{Q}(q)$ such that $\tau=\textbf{F}_q$ on $\mathbb{Z}[q]$, then $\tau$ is not a Frobenius endomorphism. Indeed, since $\tau=\textbf{F}_q$ on $\mathbb{Z}[q]$ and $\tau$ is a homomorphism of rings, we get that $\tau\left(\frac{P(q)}{R(q)}\right)=\frac{P(q^{p})}{R(q^{p})}$. On the one hand, as $1-q$ and $p$ belong to $\mathfrak{m}\setminus\mathfrak{m}^2$ then the $\mathfrak{m}$-adic norm of $x=\frac{1-q}{p}$ is equal to 1. On the other hand, the $\mathfrak{m}$-adic norm of $1-q^{p}$ is equal to $1/p^2$ because $1-q^p=(1-q)[p]_q\in\mathfrak{m}^2\setminus\mathfrak{m}^3$. Therefore, the $\mathfrak{m}$-adic norm of $\tau(x)=\frac{1-q^{p}}{p}$ is equal to $1/p$, and since the norm is ultrametric, we deduce that the $\mathfrak{m}$-adic norm of $\tau(x)-x^p$ is equal to 1. Thus, $\tau$ is not a Frobenius endomorphism of $\mathbb{Q}(q)$. Consequently, the field $\mathbb{Q}(q)$ is not equipped with a Frobenius endormorphism such that $q\mapsto q^{p}$. However, for the subring $\mathbb{Z}[q]_{\mathfrak{m}}$ of $\mathbb{Q}(q)$ we can extend the endomorphisme $\textbf{F}_q$ to a Frobenius endomorphism of $\mathbb{Z}[q]_{\mathfrak{m}}$. The ring $\mathbb{Z}[q]_{\mathfrak{m}}$ is the  localization of $\mathbb{Z}[q]$ at ideal $\mathfrak{m}$, that is, the ring $\mathbb{Z}[q]_{\mathfrak{m}}$ is the set of rational functions $\frac{P(q)}{Q(q)}$ such that $P(q)$, $Q(q)\in\mathbb{Z}[q]$ and $Q(q)\notin\mathfrak{m}$.

\begin{rema}\label{rq: bn def3}
	\begin{enumerate}[label=(\alph*)]
		\item Let $Q(q)$ be in $\mathbb{Z}[q]$. It follows from the definition of the $\mathfrak{m}$-adic norm that $Q(q)\notin\mathfrak{m}$ if and only if $|Q(q)|_{\mathfrak{m}}=1$. Moreover, Remark~\ref{noyau3} implies that $Q(q)\notin\mathfrak{m}$ if and only if $Q(1)\notin(p)$. Therefore, $|Q(q)|_{\mathfrak{m}}=1$ if and only if $|Q(1)|_p=1$.
		
		\item It follows from (a) that all elements in $\mathbb{Z}[q]_{\mathfrak{m}}$ has a norm less than or equal to 1. Consequently, $\vartheta_{\mathbb{Z}[q]_{\mathfrak{m}}}=\mathbb{Z}[q]_{\mathfrak{m}}$. Moreover, the maximal ideal of $\mathbb{Z}[q]_{\mathfrak{m}}$ is  $\mathfrak{m}\mathbb{Z}[q]_{\mathfrak{m}}$. Thus, the residue ring of $\mathbb{Z}[q]_{\mathfrak{m}}$  is $\mathbb{F}_p$.
	\end{enumerate}
\end{rema}
This last remark implies that the homomorphisms $\textbf{ev}:\mathbb{Z}[q]_{\mathfrak{m}}\rightarrow\mathbb{Z}_{p}$ and $\textbf{F}_q:\mathbb{Z}[q]_{\mathfrak{m}}\rightarrow\mathbb{Z}[q]_{\mathfrak{m}}$ given by $\textbf{ev}\left(\frac{P(q)}{Q(q)}\right)=\frac{P(1)}{Q(1)}$ and $\textbf{F}_q\left(\frac{P(q)}{Q(q)}\right)=\frac{P(q^p)}{Q(q^p)}$ are well defined. 
\begin{prop}\label{prop: ev con3}
	The homomorphism  $\textbf{ev}:\mathbb{Z}[q]_{\mathfrak{m}}\rightarrow\mathbb{Z}_{p}$ given by $\textbf{ev}\left(\frac{P(q)}{Q(q)}\right)=\frac{P(1)}{Q(1)}$ is continuous, and for all $x\in\mathbb{Z}[q]_{\mathfrak{m}}$, $|\textbf{ev}(x)|_p\leq|x|_{\mathfrak{m}}$.
\end{prop}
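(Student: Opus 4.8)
The plan is to reduce the statement to Proposition~\ref{e3}, which already gives the inequality $|\textbf{ev}(P(q))|_p\leq|P(q)|_{\mathfrak{m}}$ on the polynomial ring $\mathbb{Z}[q]$; the passage to the localization $\mathbb{Z}[q]_{\mathfrak{m}}$ is harmless because every denominator allowed in $\mathbb{Z}[q]_{\mathfrak{m}}$ has $\mathfrak{m}$-adic norm $1$ and is sent by $\textbf{ev}$ to a $p$-adic unit.

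First I would fix an arbitrary $x\in\mathbb{Z}[q]_{\mathfrak{m}}$ and write $x=P(q)/Q(q)$ with $P(q),Q(q)\in\mathbb{Z}[q]$ and $Q(q)\notin\mathfrak{m}$. By Remark~\ref{rq: bn def3}(a), the condition $Q(q)\notin\mathfrak{m}$ is equivalent to $|Q(q)|_{\mathfrak{m}}=1$ and also to $|Q(1)|_p=1$; in particular $Q(1)\neq 0$, so $\textbf{ev}(x)=P(1)/Q(1)$ is a well-defined element of $\mathbb{Z}_p$, and it does not depend on the chosen representative since $\textbf{ev}$ is obtained by localizing the ring homomorphism $\mathbb{Z}[q]\to\mathbb{Z}$, $q\mapsto 1$, at $\mathfrak{m}$. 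Using that $|\cdot|_{\mathfrak{m}}$ is multiplicative on $\mathbb{Q}(q)$ (the extension recorded just before Remark~\ref{rq: bn def3}) and that $|\cdot|_p$ is multiplicative, I compute
\[
|x|_{\mathfrak{m}}=\frac{|P(q)|_{\mathfrak{m}}}{|Q(q)|_{\mathfrak{m}}}=|P(q)|_{\mathfrak{m}},\qquad
|\textbf{ev}(x)|_p=\frac{|P(1)|_p}{|Q(1)|_p}=|P(1)|_p.
\]
Now Proposition~\ref{e3} applied to $P(q)$ gives $|P(1)|_p=|\textbf{ev}(P(q))|_p\leq|P(q)|_{\mathfrak{m}}$, and combining this with the two identities above yields $|\textbf{ev}(x)|_p\leq|x|_{\mathfrak{m}}$, which is the asserted inequality.

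Finally, for continuity I would note that, since $\textbf{ev}$ is a homomorphism of rings, $\textbf{ev}(x)-\textbf{ev}(y)=\textbf{ev}(x-y)$ for all $x,y\in\mathbb{Z}[q]_{\mathfrak{m}}$, so the inequality just proved gives $|\textbf{ev}(x)-\textbf{ev}(y)|_p\leq|x-y|_{\mathfrak{m}}$; hence $\textbf{ev}$ is $1$-Lipschitz, and in particular continuous. There is essentially no obstacle in this argument: the only points requiring a little care are the independence of $\textbf{ev}$ from the representation $P/Q$ and the fact that $Q(q)\notin\mathfrak{m}$ forces $Q(1)$ to be a $p$-adic unit, so that the division defining $\textbf{ev}(x)$ is legitimate in $\mathbb{Z}_p$; both are immediate from Remark~\ref{noyau3} and Remark~\ref{rq: bn def3}.
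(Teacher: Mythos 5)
Your proposal is correct and takes essentially the same route as the paper: both reduce to Proposition~\ref{e3} via the representation $x=P(q)/Q(q)$ with $Q(q)\notin\mathfrak{m}$, use Remark~\ref{rq: bn def3} to strip the unit denominator on both sides, and then derive continuity from the $1$-Lipschitz bound and the ring-homomorphism property. The extra remarks you add on well-definedness are fine but not part of the paper's argument.
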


\begin{proof}
Let $x=\frac{P(q)}{Q(q)}$ be in $\mathbb{Z}[q]_{\mathfrak{m}}$, where $P(q),Q(q)\in\mathbb{Z}[q]$ and $Q(q)\notin\mathfrak{m}$. From Proposition~\ref{e3} and Remark~\ref{rq: bn def3} we obtain $$|\textbf{ev}(x)|_p=\frac{|\textbf{ev}(P(q))|_p}{|\textbf{ev}(Q(q))|_p}=|\textbf{ev}(P(q))|_p\leq|P(q)|_m=\frac{|P(q)|_{\mathfrak{m}}}{|Q(q)|_{\mathfrak{m}}}=|x|_{\mathfrak{m}}.$$
Since $\textbf{ev}:\mathbb{Z}[q]_{\mathfrak{m}}\rightarrow\mathbb{Z}_p$ is a homomorphism of rings, it follows from the last inequality that $\textbf{ev}$ is continuous.
\end{proof}
In the following proposition we show that $\mathbb{Z}[q]_{\mathfrak{m}}$ has a Frobenius endomorphism such that $q\mapsto q^p$.
\begin{prop}\label{prop: frob Z[q]3}
	The endomorphism $\textbf{F}_q:\mathbb{Z}[q]_{\mathfrak{m}}\rightarrow\mathbb{Z}[q]_{\mathfrak{m}}$ given by $\textbf{F}_q\left(\frac{P(q)}{Q(q)}\right)=\frac{P(q^p)}{Q(q^p)}$ is a Frobenius endomorphism, and for all $x\in\mathbb{Z}[q]_{\mathfrak{m}}$, $|\textbf{F}_q(x)|_{\mathfrak{m}}\leq|x|_{\mathfrak{m}}$.
\end{prop}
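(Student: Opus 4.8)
The plan is to reduce everything to Proposition~\ref{frob3}, which already establishes that $\textbf{F}_q$ is a Frobenius endomorphism on $\mathbb{Z}[q]$ together with the norm estimate $|\textbf{F}_q(P(q))|_{\mathfrak{m}}\leq|P(q)|_{\mathfrak{m}}$, and to the multiplicativity of the $\mathfrak{m}$-adic norm from Proposition~\ref{prop_m_adique3}(3). The first thing I would check is that $\textbf{F}_q$ really is a well-defined ring endomorphism of $\mathbb{Z}[q]_{\mathfrak{m}}$: this needs that $Q(q)\notin\mathfrak{m}$ implies $Q(q^p)\notin\mathfrak{m}$, which is immediate from $\textbf{F}_q(\mathfrak{m}\mathbb{Z}[q])\subset\mathfrak{m}\mathbb{Z}[q]$ (noted in the proof of Proposition~\ref{frob3}), so it sends denominators to denominators and the formula $\textbf{F}_q(P/Q)=P(q^p)/Q(q^p)$ is independent of the chosen representative because $\textbf{F}_q$ is multiplicative on $\mathbb{Z}[q]$. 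The remark preceding the proposition already asserts well-definedness, so I would only recall it briefly.

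Next I would prove the norm inequality. Write $x=P(q)/Q(q)$ with $Q(q)\notin\mathfrak{m}$, so by Remark~\ref{rq: bn def3}(a), $|Q(q)|_{\mathfrak{m}}=1$ and also $|Q(q^p)|_{\mathfrak{m}}=1$ since $Q(q^p)\notin\mathfrak{m}$. Then, using multiplicativity of $|\cdot|_{\mathfrak{m}}$ on $\mathbb{Q}(q)$ and Proposition~\ref{frob3},
\begin{equation*}
|\textbf{F}_q(x)|_{\mathfrak{m}}=\frac{|P(q^p)|_{\mathfrak{m}}}{|Q(q^p)|_{\mathfrak{m}}}=|P(q^p)|_{\mathfrak{m}}\leq|P(q)|_{\mathfrak{m}}=\frac{|P(q)|_{\mathfrak{m}}}{|Q(q)|_{\mathfrak{m}}}=|x|_{\mathfrak{m}}.
\end{equation*}
In particular $\textbf{F}_q$ is a bounded, hence continuous, homomorphism of rings (continuity follows from boundedness exactly as in the proofs of Propositions~\ref{frob3} and \ref{prop: ev con3}), which gives condition (1) in the definition of a Frobenius endomorphism.

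For condition (2), I would invoke Remark~\ref{rq: bn def3}(b), which says $\vartheta_{\mathbb{Z}[q]_{\mathfrak{m}}}=\mathbb{Z}[q]_{\mathfrak{m}}$ and the residue ring is $\mathbb{F}_p$. So let $x=P(q)/Q(q)\in\mathbb{Z}[q]_{\mathfrak{m}}$; I must show $|\textbf{F}_q(x)-x^p|_{\mathfrak{m}}<1$. The cleanest route is to note that $\overline{\textbf{F}_q}$ and $y\mapsto y^p$ are both ring endomorphisms of the residue ring $\mathbb{F}_p$, and on $\mathbb{F}_p$ the Frobenius $y\mapsto y^p$ is the identity; alternatively, compute directly: $\textbf{F}_q(x)-x^p=\dfrac{P(q^p)Q(q)^p-P(q)^pQ(q^p)}{Q(q^p)Q(q)^p}$, the denominator has $\mathfrak{m}$-adic norm $1$, and applying $\pi\circ\textbf{ev}$ to the numerator gives $P(1)Q(1)^p-P(1)^pQ(1)^p\equiv P(1)Q(1)-P(1)Q(1)=0\bmod p$ (using that $a\equiv a^p$ in $\mathbb{F}_p$), so by Remark~\ref{noyau3} the numerator lies in $\mathfrak{m}$ and hence $|\textbf{F}_q(x)-x^p|_{\mathfrak{m}}<1$. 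Combining the two conditions shows $\textbf{F}_q$ is a Frobenius endomorphism of $\mathbb{Z}[q]_{\mathfrak{m}}$. There is no serious obstacle here; the only point requiring a little care is the well-definedness on the localization, i.e.\ that $\textbf{F}_q$ maps the multiplicative set $\mathbb{Z}[q]\setminus\mathfrak{m}$ into itself, and this has effectively already been recorded in the excerpt.
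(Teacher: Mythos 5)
Your proof is correct and follows essentially the same route as the paper: the norm estimate via multiplicativity and Proposition~\ref{frob3}/Remark~\ref{rq: bn def3}, continuity from the bound, and the same numerator/denominator computation for $|\textbf{F}_q(x)-x^p|_{\mathfrak{m}}<1$ using $\pi_p\circ\textbf{ev}$ and Remark~\ref{noyau3}. One small slip: $\textbf{ev}(Q(q^p))=Q(1)$, not $Q(1)^p$, so the evaluated numerator should read $P(1)Q(1)^p-P(1)^pQ(1)$; the conclusion $\equiv 0\bmod p$ is unchanged.
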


\begin{proof}
	 Let $x=\frac{P(q)}{Q(q)}$ be in $\mathbb{Z}[q]_{\mathfrak{m}}$, where $P(q),Q(q)\in\mathbb{Z}[q]$ and $Q(q)\notin\mathfrak{m}$. Proposition~\ref{frob3} and Remark~\ref{rq: bn def3} imply that  $$|\textbf{F}_q(x)|_{\mathfrak{m}}=\frac{|P(q^{p})|_{\mathfrak{m}}}{|Q(q^{p})|_{\mathfrak{m}}}=|P(q^{p})|_{\mathfrak{m}}\leq|P(q)|_{\mathfrak{m}}=\frac{|P(q)|_{\mathfrak{m}}}{|Q(q)|_{\mathfrak{m}}}=|x|_{\mathfrak{m}}.$$
	From this last inequality we conclude that $\textbf{F}_q$ is continuous because $\textbf{F}_q$ is an endomorphism of rings. Now, we are going to show that for all $x\in\mathbb{Z}[q]_{\mathfrak{m}}$, $|\textbf{F}_q(x)-x^p|_{\mathfrak{m}}<1$. Let $x=\frac{P(q)}{Q(q)}$ be in $\mathbb{Z}[q]_{\mathfrak{m}}$, where $Q(q)\notin\mathfrak{m}$. Then we have $$\textbf{F}_q(x)-x^p=\frac{P(q^{p})Q(q)^p-P(q)^pQ(q^{p})}{Q(q)^pQ(q^{p})}.$$
	As $Q(q)\notin\mathfrak{m}$,  Remark~\ref{rq: bn def3} implies $Q(q^{p})\notin\mathfrak{m}$. Since $\mathfrak{m}$ is a maximal ideal of $\mathbb{Z}[q]$, we obtain $Q(q)^pQ(q^{p})\notin\mathfrak{m}$. Therefore, $|Q(q)^pQ(q^{p})|_{\mathfrak{m}}=1$. But, $$\pi_p\circ\textbf{ev}(P(q^{p})Q(q)^p-P(q)^pQ(q^{p}))=0.$$ Then, according to Remark~\ref{noyau3}, $P(q^{p})Q(q)^p-P(q)^pQ(q^{p})\in\mathfrak{m}$, and for this reason we have $|P(q^{p})Q(q)^p-P(q)^pQ(q^{p})|_{\mathfrak{m}}<1$. So that, $$|\textbf{F}_q(x)-x^p|_{\mathfrak{m}}<1.$$
	Therefore, $ \textbf{F}_q$ is a Frobenius endormorphism.
\end{proof}

\begin{defi}
	The ring $\mathbb{Z}_{q,p}$ is the completion of $\mathbb{Z}[q]_{\mathfrak{m}}$ with respect to the $\mathfrak{m}$-adic norm.
\end{defi}

\begin{rema}\label{rq: prop de Z_q,p3} The ring $\mathbb{Z}_{q,p}$ has the following properties: \begin{enumerate}[label=(\arabic*)]
		\item Every element of $\mathbb{Z}_{q,p}$ has a norm less than or equal to 1 because $\mathbb{Z}_{q,p}$ is the completion of  $\mathbb{Z}[q]_{\mathfrak{m}}$, the $\mathfrak{m}$-adic norm is ultrametric and, according to Remark~\ref{rq: bn def3}, every element of $\mathbb{Z}[q]_{\mathfrak{m}}$ has a norm less than or equal to 1.
		
		\item The ring $\mathbb{Z}_{q,p}$ is a local ring whose residue ring is $\mathbb{F}_p$. Indeed, the ring $\mathbb{Z}_{q,p}$ is local because it is complete with respect to the $\mathfrak{m}$-adic norm. Now, we know from Remark~\ref{rq: bn def3} that the residue ring of $\mathbb{Z}[q]_{\mathfrak{m}}$ is $\mathbb{F}_p$. As $\mathbb{Z}_{q,p}$ is the completion of $\mathbb{Z}[q]_{\mathfrak{m}}$ with respect to the $\mathfrak{m}$-adic norm, then the residue ring of $\mathbb{Z}_{q,p}$ and the residue ring of $\mathbb{Z}[q]_{\mathfrak{m}}$ are the same. Therefore, the residue ring of $\mathbb{Z}_{q,p}$ is $\mathbb{F}_p$.
	\end{enumerate}
\end{rema}

\begin{rema}\label{remarque: extension de ev y F_q}
	It follows from Proposition~\ref{prop: ev con3} that $\textbf{ev}:\mathbb{Z}[q]_{\mathfrak{m}}\rightarrow\mathbb{Z}_{p}$ is Lipschitz continuous. \footnote{Let $(X,d_X)$ and $(Y,d_Y)$ be two metric spaces and let $f:X\rightarrow Y$ be a function. The function f is Lipschitz continuous if there is a real $K\geq0$ such that for all  $x_1$ and $x_2\in X$ we have $d_Y(f(x_1),f(x_2))\leq Kd_X(x_1,x_2)$.  Let $\widetilde{X}$ and $\widetilde{Y}$ be the completion of $X$ and $Y$ respectively.  If the function $f$ is Lipschitz continuous then $f$  extends in a unique way to a continuous function $\widetilde{f}:\widetilde{X}\rightarrow\widetilde{Y}$ given by $\widetilde{f}(x)=\lim\limits_{n\rightarrow\infty}f(x_n)$ where $x=\lim\limits_{n\rightarrow\infty}x_n$.} Then, it extends in a unique way to a continuous homomorphism $\textbf{ev}:\mathbb{Z}_{q,p}\rightarrow\mathbb{Z}_p$. Likewise, it follows from Proposition~\ref{prop: frob Z[q]3} that $\textbf{F}_q:\mathbb{Z}[q]_{\mathfrak{m}}\rightarrow\mathbb{Z}[q]_{\mathfrak{m}}$ is Lipschitz continuous. Then, it extends in a unique way to a continuous endomorphism $\textbf{F}_q:\mathbb{Z}_{q,p}\rightarrow\mathbb{Z}_{q,p}$.
\end{rema}

\begin{prop}\label{prop: frob Z_{q,p} 3}
	The endomorphism $\textbf{F}_q:\mathbb{Z}_{q,p}\rightarrow\mathbb{Z}_{q,p}$ is the unique Frobenius endomorphism of $\mathbb{Z}_{q,p}$ such that $\textbf{F}_q(q)=q^p$. Moreover, for every $a\in\mathbb{Z}_{q,p}$, $|\textbf{F}_q(a)|_{\mathfrak{m}}\leq|a|_{\mathfrak{m}}$ and $\textbf{ev}\circ\textbf{F}_q=\textbf{ev}$.
\end{prop}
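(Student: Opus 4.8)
The plan is to transport each assertion from $\mathbb{Z}[q]_{\mathfrak{m}}$, where it is already available, to its completion $\mathbb{Z}_{q,p}$ by a density-and-continuity argument: $\mathbb{Z}[q]_{\mathfrak{m}}$ is dense in $\mathbb{Z}_{q,p}$, and $|\cdot|_{\mathfrak{m}}$, the map $a\mapsto a^p$, the homomorphism $\textbf{ev}$ and the endomorphism $\textbf{F}_q$ are all continuous on $\mathbb{Z}_{q,p}$. I will also use that the $\mathfrak{m}$-adic norm is discretely valued --- on $\mathbb{Z}[q]$ it takes values in $\{p^{-n}:n\geq 0\}\cup\{0\}$, hence on $\mathbb{Z}_{q,p}$ in $\{p^{n}:n\in\mathbb{Z}\}\cup\{0\}$ --- so that ``$|x|_{\mathfrak{m}}<1$'' means ``$|x|_{\mathfrak{m}}\leq 1/p$''.

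First I would check that $\textbf{F}_q:\mathbb{Z}_{q,p}\to\mathbb{Z}_{q,p}$ is a Frobenius endomorphism. Continuity is Remark~\ref{remarque: extension de ev y F_q}. For the congruence, since $\vartheta_{\mathbb{Z}_{q,p}}=\mathbb{Z}_{q,p}$ by Remark~\ref{rq: prop de Z_q,p3} I must treat an arbitrary $a\in\mathbb{Z}_{q,p}$: writing $a=\lim_n x_n$ with $x_n\in\mathbb{Z}[q]_{\mathfrak{m}}$, one has $\textbf{F}_q(a)-a^p=\lim_n\bigl(\textbf{F}_q(x_n)-x_n^p\bigr)$ by continuity of $\textbf{F}_q$ and of the $p$-th power map, each term has $\mathfrak{m}$-adic norm $\leq 1/p$ by Proposition~\ref{prop: frob Z[q]3}, and the closed ball of radius $1/p$ being closed, the limit also has norm $\leq 1/p<1$. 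The inequality $|\textbf{F}_q(a)|_{\mathfrak{m}}\leq|a|_{\mathfrak{m}}$ is obtained the same way from Proposition~\ref{prop: frob Z[q]3} (for $a\neq 0$ one even has $|x_n|_{\mathfrak{m}}=|a|_{\mathfrak{m}}$ and $|\textbf{F}_q(x_n)|_{\mathfrak{m}}=|\textbf{F}_q(a)|_{\mathfrak{m}}$ eventually; for $a=0$ it is trivial), and $\textbf{ev}\circ\textbf{F}_q=\textbf{ev}$ follows because the two continuous maps $\textbf{ev}\circ\textbf{F}_q$ and $\textbf{ev}$ agree on the dense subring $\mathbb{Z}[q]_{\mathfrak{m}}$, where $\textbf{ev}\bigl(\textbf{F}_q(P(q)/Q(q))\bigr)=P(1)/Q(1)=\textbf{ev}(P(q)/Q(q))$.

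For uniqueness, let $\tau$ be any Frobenius endomorphism of $\mathbb{Z}_{q,p}$ with $\tau(q)=q^p$; in particular $\tau$ is continuous. Being a ring endomorphism, $\tau$ fixes the prime subring $\mathbb{Z}$ (as $\mathbb{Z}_{q,p}$ has characteristic zero), so $\tau(P(q))=P(q^p)=\textbf{F}_q(P(q))$ for every $P(q)\in\mathbb{Z}[q]$; and when $Q(q)\notin\mathfrak{m}$ one has $Q(q^p)\notin\mathfrak{m}$ by Remark~\ref{rq: bn def3}, so $|Q(q^p)|_{\mathfrak{m}}=1$, hence $Q(q^p)$ is a unit of $\mathbb{Z}_{q,p}$, and applying $\tau$ to $Q(q)\cdot\frac{P(q)}{Q(q)}=P(q)$ forces $\tau\bigl(\frac{P(q)}{Q(q)}\bigr)=\frac{P(q^p)}{Q(q^p)}=\textbf{F}_q\bigl(\frac{P(q)}{Q(q)}\bigr)$. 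Thus $\tau$ and $\textbf{F}_q$ agree on the dense subring $\mathbb{Z}[q]_{\mathfrak{m}}$, and both are continuous, so $\tau=\textbf{F}_q$. I expect the main (mild) obstacle to be precisely this last point: one must verify that $\tau$ is already determined on the whole localization $\mathbb{Z}[q]_{\mathfrak{m}}$, not just on $\mathbb{Z}[q]$, before density can be invoked --- this is where the invertibility of $Q(q^p)$ enters --- while everything else is a routine passage to the limit.
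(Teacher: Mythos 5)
Your proof is correct and follows the same density-and-continuity strategy as the paper's own argument, transporting each property from $\mathbb{Z}[q]_{\mathfrak{m}}$ to its completion. The only notable differences are cosmetic: for the Frobenius congruence you pass to the limit of a whole Cauchy sequence and invoke closedness of the ball of radius $1/p$, whereas the paper picks a single approximant $x$ with $|a-x|_{\mathfrak{m}}<1$ and applies the strong triangle inequality; and in the uniqueness step you explicitly justify (via invertibility of $Q(q^p)$ in $\mathbb{Z}_{q,p}$) why $\tau$ is forced to agree with $\textbf{F}_q$ on all of $\mathbb{Z}[q]_{\mathfrak{m}}$, a point the paper asserts without spelling out.
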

We call the endomorphism $\textbf{F}_q:\mathbb{Z}_{q,p}\rightarrow\mathbb{Z}_{q,p}$, \emph{the Frobenius endomorphism of} $\mathbb{Z}_{q,p}$.
\begin{proof}
	From Remark~\ref{remarque: extension de ev y F_q} we know that $ \textbf{F}_q$ is continuous. First, we are going to show that  for every $a\in\mathbb{Z}_{q,p}$, $|\textbf{F}_q(a)|_{\mathfrak{m}}|\leq |a|_{\mathfrak{m}}$. Let $a$ be in $\mathbb{Z}_{q,p}$ non-zero. Then, there is a Cauchy sequence $\{a_n\}_{n\geq0}$ in $\mathbb{Z}[q]_{\mathfrak{m}}$ such that $a$ is the limit of this sequence. As the $\mathfrak{m}$-adic norm is ultrametric and $a$ is non zero, then there exists an integer $N>0$ such that for all  $n\geq N$, $|a|_{\mathfrak{m}}=|a_n|_{\mathfrak{m}}$. If $|\textbf{F}_q(a)|_{\mathfrak{m}}=0$ then $|\textbf{F}_q(a)|_{\mathfrak{m}}|\leq |a|_{\mathfrak{m}}$. Now, if $|\textbf{F}_q(a)|_{\mathfrak{m}}\neq0$, as $\textbf{F}_q(a)$ is the limit of the sequence $\{\textbf{F}_q(a_n)\}_{n\geq0}$ and the $\mathfrak{m}$-adic norm is ultrametric, then there is an integer $M>0$ such that for all $n\geq M$, $|\textbf{F}_q(a)|_{\mathfrak{m}}=|\textbf{F}_q(a_n)|_{\mathfrak{m}}$. From Proposition~\ref{prop: frob Z[q]3}, we know that for all $n\geq0$, $|\textbf{F}_q(a_n)|_{\mathfrak{m}}\leq|a_n|_{\mathfrak{m}}$ since $a_n\in\mathbb{Z}[q]_{\mathfrak{m}}$. For this reason for all $n\geq\max\{N,M\}$,
	\begin{equation}\label{ingelité}
	|\textbf{F}_q(a)|_{\mathfrak{m}}=|\textbf{F}_q(a_n)|_{\mathfrak{m}}\leq|a_n|_{\mathfrak{m}}=|a|_{\mathfrak{m}}.
	\end{equation}
	Second, we are going to show that for every $a\in\mathbb{Z}_{q,p}$, $|\textbf{F}_q(a)-a^p|_{\mathfrak{m}}<1$. Let $a$ be in $\mathbb{Z}_{q,p}$, then there is $x\in\mathbb{Z}[q]_{\mathfrak{m}}$ such that $|a-x|_{\mathfrak{m}}<~1$. Then, $|a^p-x^p|_{\mathfrak{m}}<1$ since $\overline{a}=\overline{x}$ and so, $\overline{a^p}=\overline{x^p}$. Now, from \eqref{ingelité} follows that, $|\textbf{F}_q(a)-\textbf{F}_q(x)|_{\mathfrak{m}}\leq|a-x|_{\mathfrak{m}}<1$. But, $|\textbf{F}_q(x)-x^p|<1$ because $x\in\mathbb{Z}[q]_{\mathfrak{m}}$ and, according to Proposition~\ref{prop: frob Z[q]3}, $\textbf{F}_q:\mathbb{Z}[q]_{\mathfrak{m}}\rightarrow\mathbb{Z}[q]_{\mathfrak{m}}$ is a Frobenius endomorphism. Thus, 
	\begin{align*}
	|\textbf{F}_q(a)-a^p|_{\mathfrak{m}}  = & |\textbf{F}_q(a)-\textbf{F}_q(x)+\textbf{F}_q(x)-x^p+x^p-a^p|_{\mathfrak{m}} \\
	\leq & \max\{|\textbf{F}_q(a)-\textbf{F}_q(x)|_{\mathfrak{m}},|\textbf{F}_q(x)-x^p|_{\mathfrak{m}}, |x^p-a^p|_{\mathfrak{m}}\} & \\ <&1.
	\end{align*}
	For this reason $ \textbf{F}_q$ is a Frobenius endomorphism of $\mathbb{Z}_{q,p}$  such that $q\mapsto q^{p}$. Now, we are going to show that if $\tau$ is a Frobenius endomorphism of $\mathbb{Z}_{q,p}$ such that $\tau(q)=q^{p}$, then $\tau=\textbf{F}_q$. In fact, the restriction of $\tau$ on $\mathbb{Z}[q]_{\mathfrak{m}}$ is given by $\tau\left(\frac{P(q)}{R(q)}\right)=\frac{P(q^p)}{R(q^p)}=\textbf{F}_q\left(\frac{P(q)}{R(q)}\right)$. We know that if $x\in\mathbb{Z}_{q,p}$, then $x=\lim\limits_{n\rightarrow\infty}x_n$, where $x_n\in\mathbb{Z}[q]_{\mathfrak{m}}$ for all $n\geq0$. Therefore, by continuity we have $\tau(x)=\lim\limits_{n\to\infty}\tau(x_n)$. But, $\tau(x_n)=\textbf{F}_{q}(x_n)$ since $x_n\in\mathbb{Z}[q]_{\mathfrak{m}}$ and by continuity we have $\lim\limits_{n\to\infty}\textbf{F}_{q}(x_n)=\textbf{F}_{q}(x).$ For this reason, $\tau(x)=\textbf{F}_q(x)$. Finally, we are going to show that $\textbf{ev}\circ\textbf{F}_q=\textbf{ev}$. Let $\frac{P(q)}{Q(q)}$ be in $\mathbb{Z}[q]_{\mathfrak{m}}$ such that $Q(q)\notin\mathfrak{m}$. Then we have, $$\textbf{ev}\circ\textbf{F}_q\left(\frac{P(q)}{Q(q)}\right)=\textbf{ev}\left(\frac{P(q^p)}{Q(q^p)}\right)=\frac{P(1)}{Q(1)}=\textbf{ev}\left(\frac{P(q)}{Q(q)}\right).$$
	As $\mathbb{Z}_{q,p}$ is the completion of $\mathbb{Z}[q]_{\mathfrak{m}}$ with respect to the $\mathfrak{m}$-adic norm and the functions $ \textbf{F}_q$ and $ \textbf{ev}$ are continuous, then  it follows from this last equality that $\textbf{ev}\circ\textbf{F}_q(x)=\textbf{ev}(x)$ for all $x\in\mathbb{Z}_{q,p}$.
\end{proof}
The ring $\mathbb{Z}_{q,p}[[z]]$ is equipped with the Gauss norm. This norm is defined as follows: $|\sum_{n\geq0}a(n)z^n|_{\mathcal{G}}=sup\{|a(n)|_{\mathfrak{m}}\}_{n\geq0}$. The ring $\mathbb{Z}_{q,p}[[z]]$ is complete for this norm because $\mathbb{Z}_{q,p}$ is complete and its residue ring is the ring $\mathbb{F}_p[[z]]$ since the residue field of $\mathbb{Z}_{q,p}$ is $\mathbb{F}_p$. Now, we are going to construct a subring of $\mathbb{Z}_{q,p}[[z]]$ whose residue ring is contained in $\mathbb{F}_p(z)$. 
\begin{defi}
	Let $S$ be the set of $P(z)\in\mathbb{Z}_{q,p}[z]$ with $|P(0)|_{\mathfrak{m}}=1$\footnote{The set $S$ is a multiplicative set, that is, if $P(z)$ and $Q(z)$ belong to $S$,  then $P(z)Q(z)$ belongs to $S$.}. Let $\mathbb{Z}_{q,p}[z]_{S}$ be the localization of $\mathbb{Z}_{q,p}[z]$ at $S$. The ring $E_{\mathbb{Z},q,p}$ is the completion of $\mathbb{Z}_{q,p}[z]_{S}$ with respect to the Gauss norm\footnote{Note that $\mathbb{Z}_{q,p}[z]_{S}$ is contained in $\mathbb{Z}_{q,p}[[z]]$. For this reason it makes sense to complete $\mathbb{Z}_{q,p}[z]_{S}$ with respect to the Gauss norm.}.
\end{defi}
Let $\textbf{F}:\mathbb{Z}_p[[z]]\rightarrow\mathbb{Z}_p[[z]]$ be the endomorphism given by $\textbf{F}(\sum_{n\geq0}a_nz^n)=\sum_{n\geq0}a_nz^{np}$. The endomorphism $\textbf{F}$ is the unique Frobenius endomorphism of $\mathbb{Z}_p[[z]]$ such that $z\mapsto z^p$. In our next proposition, we state some properties of the ring $E_{\mathbb{Z},q,p}$.

\begin{prop}\label{prop: proprietes de E_q,p3}
	\begin{enumerate}[label=(\arabic*)]
		\item The ring $E_{\mathbb{Z},q,p}$ is contained in $\mathbb{Z}_{q,p}[[z]]$ and its residue ring is contained in $\mathbb{F}_p(z)$.
		\item The ring $E_{\mathbb{Z},q,p}$ is equipped with a continuous endomorphism $\sigma_q$ given by $$\sigma_q\left(\sum_{n\geq0}a_nz^n\right)=\sum_{n\geq0}a_nq^nz^n.$$ It is also equipped with a continuous endomorphism $\delta_q$ given by $$\delta_q(f(z))=\frac{\sigma_q(f)-f}{q-1}.$$
		\item The endomorphism $\textbf{F}_q:E_{\mathbb{Z},q,p}\rightarrow E_{\mathbb{Z},q,p}$ given by $$\textbf{F}_q\left(\sum_{n\geq0}a_nz^n\right)=\sum_{n\geq0}\textbf{F}_q(a_n)z^{np}$$ is a Frobenius endomorphism.
		\item The homomorphism $\textbf{ev}:E_{\mathbb{Z},q,p}\rightarrow E_p\cap\mathbb{Z}_p[[z]]$ given by $$\textbf{ev}\left(\sum_{n\geq0}a_nz^n\right)=\sum_{n\geq0}\textbf{ev}(a_n)z^n$$ is continuous.
		\item For each integer $h>0$, we have $\textbf{ev}\circ\textbf{F}_q^h=\textbf{F}^h\circ\textbf{ev}$. 
	\end{enumerate}
\end{prop}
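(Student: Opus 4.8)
The strategy is to establish each of the five items by first checking the assertion on the dense subring $\mathbb{Z}_{q,p}[z]_{S}$ — where everything is explicit — and then extending by continuity to the completion $E_{\mathbb{Z},q,p}$, using the already-proved continuity of the relevant maps on $\mathbb{Z}_{q,p}$ (Propositions~\ref{prop: ev con3}, \ref{prop: frob Z_{q,p} 3}, Remark~\ref{remarque: extension de ev y F_q}). The recurring technical point is that the Gauss norm on $\mathbb{Z}_{q,p}[z]_{S}$ is submultiplicative (in fact multiplicative on polynomials, by the ultrametric Gauss lemma, since $|\cdot|_{\mathfrak{m}}$ is an absolute value on $\mathbb{Z}_{q,p}$ by Proposition~\ref{prop_m_adique3}), so that an endomorphism fixing $z$ or sending $z \mapsto z^p$ and not increasing norms on constants is automatically non-norm-increasing, hence Lipschitz, hence uniquely extendable to $E_{\mathbb{Z},q,p}$.

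\emph{Item (1).} Since $\mathbb{Z}_{q,p}[z]_{S} \subset \mathbb{Z}_{q,p}[[z]]$ (a denominator $P(z)$ with $|P(0)|_{\mathfrak{m}}=1$ is invertible in $\mathbb{Z}_{q,p}[[z]]$ by the ultrametric geometric series argument, its inverse having Gauss norm $1$), and $\mathbb{Z}_{q,p}[[z]]$ is complete for the Gauss norm, the completion $E_{\mathbb{Z},q,p}$ embeds in $\mathbb{Z}_{q,p}[[z]]$. For the residue ring: the elements of $S$ reduce modulo the maximal ideal to polynomials in $\mathbb{F}_p[z]$ with nonzero constant term, so the residue ring of $\mathbb{Z}_{q,p}[z]_{S}$ is contained in the localization of $\mathbb{F}_p[z]$ at such polynomials, which sits inside $\mathbb{F}_p(z)$; passing to the completion does not enlarge the residue ring, so the residue ring of $E_{\mathbb{Z},q,p}$ is contained in $\mathbb{F}_p(z)$.

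\emph{Items (2)–(5).} First I would check that $\sigma_q$, and hence $\delta_q$, preserve $\mathbb{Z}_{q,p}[z]_{S}$: $\sigma_q(z)=qz$ with $|q|_{\mathfrak{m}}=1$, so $\sigma_q$ is a ring endomorphism with $\sigma_q(S)\subset S$, and it does not increase the Gauss norm, hence extends continuously to $E_{\mathbb{Z},q,p}$; since $q-1$ is a unit-multiple divisor only in a weak sense, one checks directly on a power series $f=\sum a_n z^n$ that $\delta_q(f)=\sum [n]_q a_n z^n$ where $[n]_q = 1 + q + \cdots + q^{n-1}$ has $|[n]_q|_{\mathfrak{m}} \le 1$, so $\delta_q$ is well-defined and norm-non-increasing on $\mathbb{Z}_{q,p}[[z]]$, a fortiori on $E_{\mathbb{Z},q,p}$. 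For (3), the map $\textbf{F}_q$ on power series is $\sum a_n z^n \mapsto \sum \textbf{F}_q(a_n) z^{np}$; it is a ring endomorphism because the Frobenius endomorphism of $\mathbb{Z}_{q,p}$ is (Proposition~\ref{prop: frob Z_{q,p} 3}) and because $z^p$ substitution is compatible with products; it sends $S$ into $S$ (constant term $a_0$ maps to $\textbf{F}_q(a_0)$, still of norm $1$), does not increase the Gauss norm since $|\textbf{F}_q(a_n)|_{\mathfrak{m}}\le|a_n|_{\mathfrak{m}}$, hence extends to $E_{\mathbb{Z},q,p}$; the Frobenius congruence $|\textbf{F}_q(x)-x^p|_{\mathcal{G}}<1$ is checked on monomials $a z^n$ (where $\textbf{F}_q(az^n)=\textbf{F}_q(a)z^{np}$ and $(az^n)^p = a^p z^{np}$, so the difference is $(\textbf{F}_q(a)-a^p)z^{np}$ of Gauss norm $<1$) and then propagated to all of $\mathbb{Z}_{q,p}[[z]]$ by the ultrametric inequality and the fact that reduction mod the maximal ideal of a Gauss-norm-$\le 1$ power series is the obvious reduction, so $\overline{\textbf{F}_q}$ acts on $\mathbb{F}_p(z)$ by $\bar a_n \mapsto \bar a_n^{\,}$, $z\mapsto z^p$, which is the Frobenius of the residue ring. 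For (4), $\textbf{ev}$ on $\mathbb{Z}_{q,p}[z]_{S}$ is $\sum a_n z^n \mapsto \sum \textbf{ev}(a_n) z^n$; by Proposition~\ref{prop: ev con3} it does not increase norms and sends a denominator in $S$ to a power series in $\mathbb{Z}_p[[z]]$ with constant term a unit, i.e. into $E_p \cap \mathbb{Z}_p[[z]]$, so it is Lipschitz and extends continuously; the image lands in $E_p \cap \mathbb{Z}_p[[z]]$ because $E_p$ is complete and contains the image of $\mathbb{Z}_{q,p}[z]_{S}$. Finally (5) is obtained by comparing the two continuous endomorphisms $\textbf{ev}\circ\textbf{F}_q^h$ and $\textbf{F}^h\circ\textbf{ev}$ on the dense subring $\mathbb{Z}_{q,p}[z]_{S}$: on $a_n z^n$ the left side gives $\textbf{ev}(\textbf{F}_q^h(a_n)) z^{n p^h} = \textbf{ev}(a_n) z^{n p^h}$ using $\textbf{ev}\circ\textbf{F}_q=\textbf{ev}$ from Proposition~\ref{prop: frob Z_{q,p} 3}, and the right side gives $\textbf{F}^h(\textbf{ev}(a_n) z^n) = \textbf{ev}(a_n) z^{n p^h}$; they agree, hence agree everywhere.

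\emph{Main obstacle.} The genuinely delicate step is not any single computation but making sure the localizations behave: one must verify that inverting $S$ and then completing for the Gauss norm really does produce a ring with residue ring inside $\mathbb{F}_p(z)$ rather than something larger, and that each of $\sigma_q$, $\delta_q$, $\textbf{F}_q$, $\textbf{ev}$ respects the multiplicative set $S$ (so that it is defined on the localization before one even talks about completion). Once $S$-compatibility and the non-increase of the Gauss norm are in hand, all continuity and extension claims, and the identity in (5), follow formally from density; the Frobenius-congruence verification in (3) is the only place where one must argue at the level of residues rather than norms, and there the argument is the standard reduction-mod-$\mathcal{I}$ computation on monomials.
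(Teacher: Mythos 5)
Your proposal takes essentially the same route as the paper: verify everything on the dense subring $\mathbb{Z}_{q,p}[z]_S$ and extend by continuity to the completion, with the $S$-compatibility of each operator ($\sigma_q$, $\delta_q$, $\textbf{F}_q$, $\textbf{ev}$) being the pivotal check. One caution: for $\delta_q$ the phrase ``well-defined and norm-non-increasing on $\mathbb{Z}_{q,p}[[z]]$, a fortiori on $E_{\mathbb{Z},q,p}$'' is a non-sequitur, since being an endomorphism of the ambient ring does not make the subring $E_{\mathbb{Z},q,p}$ stable; one must actually verify $\delta_q(\mathbb{Z}_{q,p}[z]_S)\subset\mathbb{Z}_{q,p}[z]_S$ by the explicit rational-function computation (as the paper does), which you do flag in your ``Main obstacle'' paragraph but do not carry out in the body of the argument.
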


We will show Proposition~\ref{prop: proprietes de E_q,p3} at the end of this section.\\
Now, we consider the $q$-difference operator the order $n$
\begin{equation}\label{L_q3}
L_q:=\sigma^n_q+a_{1}(z)\sigma^{n-1}_q+\cdots+a_{n-1}(z)\sigma_q+a_{n}(z)\in E_{\mathbb{Z},q,p}[\sigma_q].
\end{equation}
The companion matrix of $L_q$ is the following matrix \[A_q=\begin{pmatrix}
0 & 1 & 0 & \cdots & 0 \\
0 & 0 & 1 & \cdots & 0\\
\vdots & \vdots & \vdots & \cdots & \vdots\\
0 & 0 & 0 & \cdots & 1\\
-a_n & -a_{n-1} & -a_{n-2} & \cdots & -a_1\\ 
\end{pmatrix}.\]
Let $A_q^{\textbf{F}^h_q}\in\mathcal{M}_n(E_{\mathbb{Z},q,p})$ be the matrix obtained by applying  $h$-times the endomorphism $\textbf{F}_q$ to each entry of $A_q$.

\begin{defi}[\textit{Strong Frobenius structure}]\label{defsffq3}
	Let $L_q$ be in $E_{\mathbb{Z},q,p}[\sigma_q]$ defined as in \eqref{L_q3} and let $A_q$ be the companion matrix of $L_q$. We say that $L_q$ has a strong Frobenius structure with period $h$ if there exists $H_q\in\rm{ GL}_n(E_{\mathbb{Z},q,p})$ such that
	\begin{equation}\label{expressionsff3}
	\sigma_q(H_q)A^{\textbf{F}^h_q}_q=A_qH_q.
	\end{equation}
\end{defi}

Suppose that $L_q$ in $E_{\mathbb{Z},q,p}[\sigma_q]$ has a strong Frobenius structure with period $h$. We say that the matrix $H_q\in\rm{ GL}_n(E_{\mathbb{Z},q,p})$ is a \emph{Frobenius transition matrix} for the operator $L_q$ if $H_q$ satisfies Equality~\eqref{expressionsff3}. In general, we say that the $q$-difference system $\sigma_qX=A_qX$ with $A_q\in\mathcal{ M} _n(E_{\mathbb{Z},q,p})$ has a strong Frobenius structure with period $h$ if there exists a matrix $H_q\in\rm{GL}_n(E_{\mathbb{Z},q,p})$ such that $\sigma_q(H_q)A^{\textbf{F}^h_q}_q=A_qH_q.$\\
Applying induction on $i>0$, we have $\sigma^i_q=\sum_{j=0}^i\binom{i}{j}(q-1)^j\delta^j_q$, where $\delta_q=\frac{\sigma_q-1}{q-1}$. 
Let $L_q$ be in $E_{\mathbb{Z},q,p}[\sigma_q]$ defined as in \eqref{L_q3}. Therefore, if for each $i\in\{1,\ldots,n\}$,  $\sigma_q^i$ is substituted by $\sum_{j=0}^i\binom{i}{j}(q-1)^j\delta^j_q$ into $\frac{1}{(q-1)^n}L_q$ we get the following operator
\begin{equation}\label{delta3}
L_{\delta_q}=\delta^n_q+b_1(z)\delta^{n-1}_q+\cdots+b_{n-1}(z)\delta_q+b_n(z),
\end{equation}
where for each $i\in\{1,\ldots,n\}$, $$b_i(z)=\frac{1}{(q-1)^i}\sum\limits_{k=n-i}^n\binom{k}{n-i}a_{n-k}(z).$$
Now, we consider the system $\delta_qX=C_qX$, where $C_q$ is the companion matrix of $L_{\delta_q}$. We want to define the notion of strong Frobenius structure for this system. Previously, we have defined the notion of strong Frobenius structure for a system of the form $\sigma_qX=A_qX$. As  $\delta_q=\frac{\sigma_q-1}{q-1}$, from $\delta_qX=C_qX$ we obtain $\sigma_qX=((q-1)C_q+Id)X$, where $Id$ is the identity matrix of rang $n$.  We set $B_q=(q-1)C_q+Id$. Then, we say that the system $\delta_qX=C_qX$ has a \emph{strong Frobenius structure} if $B_q\in\mathcal{ M}_n(E_{\mathbb{Z},q,p})$ and if the system $\sigma_qX=B_qX$ has a strong Frobenius structure. The matrix $B_q$ is called \emph{the matrix associated with $L_{\delta_q}$.}

\begin{defi}
	Let $L_{\delta_q}$ be in $E_{\mathbb{Z},q,p}[\delta_q]$ defined as in \eqref{delta3} and let $C_q$ be the companion matrix of $L_{\delta_q}$. The operator $L_{\delta_q}$ has a strong Frobenius structure if the system $\delta_qX=C_qX$ has a strong Frobenius structure.
\end{defi}
We finish this section by showing Proposition~\ref{prop_m_adique3} and Proposition~\ref{prop: proprietes de E_q,p3}. In order to prove Proposition~\ref{prop_m_adique3} we need the following auxiliary lemma.

\begin{lemm}\label{lem_auxiliaure}
	Let $n$ be a positive integer. If $x\in\mathfrak{m}^{n}\cap\mathfrak{m}^{n+1}$ and $x=\sum\limits_{i=0}^{n}a_{i}(q)p^{n-i}(1-q)^i$, then for all $i\in\{0,1,\ldots,n\}$, $a_i(q)\in\mathfrak{m}$.
\end{lemm}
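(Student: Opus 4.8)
We want to show that if $x\in\mathfrak{m}^n\cap\mathfrak{m}^{n+1}$ and $x=\sum_{i=0}^n a_i(q)p^{n-i}(1-q)^i$, then every coefficient $a_i(q)$ lies in $\mathfrak{m}$. The natural strategy is to reduce modulo $\mathfrak{m}^{n+1}$ and analyze the structure of the quotient $\mathfrak{m}^n/\mathfrak{m}^{n+1}$, which is a module over $\mathbb{Z}[q]/\mathfrak{m}=\mathbb{F}_p$. Since $\mathfrak{m}^n$ is generated by the family $\{p^{n-i}(1-q)^i\}_{0\le i\le n}$, the images of these $n+1$ monomials span the $\mathbb{F}_p$-vector space $\mathfrak{m}^n/\mathfrak{m}^{n+1}$, and the statement $x\in\mathfrak{m}^{n+1}$ says precisely that $\sum_i \bar a_i(1)\,[p^{n-i}(1-q)^i]=0$ in this quotient, where $\bar a_i(1)\in\mathbb{F}_p$ is the residue of $a_i(q)$ (note $a_i(q)\in\mathfrak{m}$ iff $\bar a_i(1)=0$, by Remark~\ref{noyau3}). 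So the lemma is equivalent to the assertion that the images of $p^{n-i}(1-q)^i$, $0\le i\le n$, are \emph{linearly independent} over $\mathbb{F}_p$ in $\mathfrak{m}^n/\mathfrak{m}^{n+1}$; in other words, $\dim_{\mathbb{F}_p}\mathfrak{m}^n/\mathfrak{m}^{n+1}=n+1$.

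**Carrying it out.** First I would set up the reduction carefully: write $a_i(q)=c_i+ (q-1)\tilde a_i(q) + p\,(\text{stuff})$ with $c_i\in\{0,\dots,p-1\}$, so that modulo $\mathfrak{m}^{n+1}$ we have $x\equiv \sum_{i=0}^n c_i\, p^{n-i}(1-q)^i$, because any term where $a_i(q)$ contributes a factor of $p$ or $(1-q)$ pushes the corresponding summand into $\mathfrak{m}^{n+1}$. Thus $x\in\mathfrak{m}^{n+1}$ forces $\sum_{i=0}^n c_i\,p^{n-i}(1-q)^i\in\mathfrak{m}^{n+1}$, and it suffices to show each $c_i\equiv 0\bmod p$. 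To do this I would exhibit a concrete linear functional detecting the top coefficient: consider the surjection $\mathbb{Z}[q]\to\mathbb{Z}[q]/(p)=\mathbb{F}_p[q]$, under which $\mathfrak{m}$ maps to $(1-q)\mathbb{F}_p[q]$ and $\mathfrak{m}^{n+1}$ maps to $(1-q)^{n+1}\mathbb{F}_p[q]$. The element $\sum_i c_i p^{n-i}(1-q)^i$ maps to $c_0(1-q)^n \bmod p$ (all terms with $i<n$ carry a positive power of $p$ and die), and being in $\mathfrak{m}^{n+1}$ it must map into $(1-q)^{n+1}\mathbb{F}_p[q]$; since $(1-q)^n\notin(1-q)^{n+1}\mathbb{F}_p[q]$, we get $p\mid c_0$, i.e. $a_0(q)\in\mathfrak{m}$. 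Now peel off: $x-a_0(q)p^n\in\mathfrak{m}^{n+1}$, and dividing the relation $\sum_{i=1}^n a_i(q)p^{n-i}(1-q)^i\in\mathfrak{m}^{n+1}$ through — more precisely, rewriting $x-a_0(q)p^n=(1-q)\big(\sum_{i=1}^n a_i(q)p^{n-i}(1-q)^{i-1}\big)$ and using that $(1-q)$ is a nonzerodivisor and that $\mathfrak{m}^{n+1}:(1-q)\supseteq\mathfrak{m}^n$ — we would like to conclude $\sum_{i=1}^n a_i(q)p^{n-i}(1-q)^{i-1}\in\mathfrak{m}^n$ and then induct on $n$.

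**The main obstacle.** The delicate point is the colon-ideal step: from $(1-q)\cdot y\in\mathfrak{m}^{n+1}$ I want to deduce $y\in\mathfrak{m}^n$, which is the claim that $\mathfrak{m}^{n+1}:(1-q)=\mathfrak{m}^n$. This is not automatic and is really the crux of the lemma; I would establish it by the clean reduction-mod-$p$ argument rather than by a direct induction that risks circularity. Concretely: the graded ring $\bigoplus_n \mathfrak{m}^n/\mathfrak{m}^{n+1}$ is the associated graded of $\mathbb{Z}[q]$ at $\mathfrak{m}$, and one checks it is the polynomial ring $\mathbb{F}_p[P,Q]$ on two degree-one generators $P=\overline{p}$, $Q=\overline{1-q}$ — this follows because $\mathfrak{m}/\mathfrak{m}^2$ is two-dimensional over $\mathbb{F}_p$ with basis $\bar p,\overline{1-q}$ (Remark~\ref{noyau3} gives $\mathfrak m\ne\mathfrak m^2$; that $p,1-q$ are independent mod $\mathfrak m^2$ is exactly the mod-$p$ computation above), and no nontrivial homogeneous relation can hold since $\mathbb{F}_p[q]$ is a polynomial ring and $p$ already generates the other "direction." In $\mathbb{F}_p[P,Q]$ the monomials of degree $n$ are linearly independent, which is precisely $\dim_{\mathbb{F}_p}\mathfrak{m}^n/\mathfrak{m}^{n+1}=n+1$, giving both the colon-ideal identity and the lemma at one stroke. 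Alternatively, bypassing associated-graded language entirely, one runs the mod-$p$ argument directly: reducing $x=\sum_i a_i(q)p^{n-i}(1-q)^i\in\mathfrak{m}^{n+1}$ modulo $p$ gives $\bar a_n(q)(1-q)^n\in(1-q)^{n+1}\mathbb{F}_p[q]$ hence $(1-q)\mid\bar a_n(q)$ in $\mathbb F_p[q]$, i.e. $a_n(q)\in\mathfrak m$; then the "constant-in-$q$ mod $(1-q)$" piece is handled by a symmetric argument evaluating at $q=1$ over $\mathbb{Z}/p^{n+1}$, and an induction on $n$ (or a direct monomial count) closes the remaining coefficients. I would write up the mod-$p$/evaluation version, as it keeps everything inside $\mathbb{Z}[q]$ and avoids invoking associated graded rings not developed in the paper.
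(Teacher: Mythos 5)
Your framing --- reduce the lemma to the $\mathbb{F}_p$-linear independence of the classes of $p^{n-i}(1-q)^i$ in $\mathfrak{m}^n/\mathfrak{m}^{n+1}$ --- is correct and conceptually cleaner than the paper's argument, and the associated-graded route is a genuinely different approach. However, as written it is circular: showing $\mathfrak{m}/\mathfrak{m}^2$ is two-dimensional over $\mathbb{F}_p$ only produces a surjection from $\mathbb{F}_p[P,Q]$ onto the associated graded, and the assertion that ``no nontrivial homogeneous relation can hold'' is precisely what the lemma claims in each degree. To make this rigorous you must invoke the theorem that the associated graded of a regular local ring is a polynomial ring, after verifying that $\mathbb{Z}[q]_{\mathfrak m}$ is regular of dimension two --- a valid move, but a piece of machinery the paper avoids. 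There is also an indexing slip in your second paragraph: the reduction modulo $p$ isolates the $i=n$ term, giving $c_n(1-q)^n$ and hence $a_n(q)\in\mathfrak{m}$, not $a_0(q)$ (you state this correctly in your third paragraph, so the two passages contradict each other).

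The paper's proof is an elementary induction on $n$ that never formulates a colon ideal and uses only evaluation at $q=1$ and divisibility by $1-q$ in $\mathbb{Z}[q]$. From the two presentations $x=\sum_i a_i(q) p^{n-i}(1-q)^i$ and $x=\sum_j b_j(q) p^{n+1-j}(1-q)^j$, evaluating at $q=1$ gives $a_0\in\mathfrak{m}$. Writing $a_0=t_0p+t_1(1-q)$ and subtracting, a second evaluation at $q=1$ shows that the surviving coefficient $b_0-t_0$ of $p^{n+1}$ vanishes at $q=1$, hence factors as $f_0(q)(1-q)$; as $1-q$ is a non-zerodivisor, it cancels from the whole relation, producing an element $y=\sum_{i=0}^{n-1}a_{i+1}(q)p^{n-1-i}(1-q)^i$ lying in $\mathfrak m^{n-1}\cap\mathfrak m^{n}$ to which the inductive hypothesis applies. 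This is in effect the restricted colon-ideal identity you correctly identify as the crux, but proved by hand. Your alternative sketch (mod $p$ for $a_n$, evaluation at $q=1$ for $a_0$, then ``induction or direct monomial count'') points in the right direction but leaves the extraction of the middle coefficients unstated; it can be completed either by reducing modulo $(p^{k+1},(1-q)^{n-k+1})$ for each $k$ and running the evaluation argument inside each such quotient, or by following the paper's peeling-and-cancelling induction, and if you intend to write up the direct version you should carry out one of these in full.
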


	\begin{proof}
		For all $n\geq1$, the ideal $\mathfrak{m}^n$ is generated by the family $\{p^{n-i}(1-q)^i\}_{0\leq i\leq n}$ because $\mathfrak{m}=(p,1-q)$. We recall that the homomorphism $\textbf{ev}:\mathbb{Z}[q]\rightarrow\mathbb{Z}$ is given by $\textbf{ev}(P(q))=P(1)$. In order to prove our lemma, we are going to apply induction on $n\geq1$. 
		
When $n=1$, according to our hypotheses, $x\in\mathfrak{m}\cap\mathfrak{m}^2$ and $x=a_0(q)p+a_1(q)(1-q)$, where $a_0(q),a_1(q)\in\mathbb{Z}[q]$. We are going to show that $a_0(q),a_1(q)\in\mathfrak{m}$. As $x\in\mathfrak{m}^2$ we have $x=b_0(q)p^2+b_1(q)p(1-q)+b_2(q)(1-q)^2$, where $b_0(q), b_1(q)$, and $b_2(q)$ belong to $\mathbb{Z}[q]$. So that, 
		\begin{equation}\label{egalite}
		a_0(q)p+a_1(q)(1-q)=b_0(q)p^2+b_1(q)p(1-q)+b_2(q)(1-q)^2.
		\end{equation}
		Substituting $q=1$ into this last equality we obtain $a_0(1)=b_0(1)p$. Thus, $\pi_p\circ \textbf{ev}(a_0(q))=0$. According to Remark~\ref{noyau3}, the ideal $\mathfrak{m}$ is the kernel of $\pi_p\circ\textbf{ev}$, then $a_0(q)\in\mathfrak{m}$. For this reason, there are $t_0(q),t_1(q)\in\mathbb{Z}[q]$ such that $a_0(q)=t_0(q)p+t_1(q)(1-q)$ and consequently, we have $a_0(q)p=t_0(q)p^2+t_1(q)p(1-q)$. It follows from \eqref{egalite} that, $$a_1(q)(1-q)=p^2(b_0(q)-t_0(q))+p(1-q)(b_1(q)-t_1(q))+b_2(q)(1-q)^2.$$
		Then, $\textbf{ev}(p^2(b_0(q)-t_0(q))=p^2(b_0(1)-t_0(1))=0.$
		Therefore, 1 is a zero of the polynomial $b_0(q)-t_0(q)$ and so, there is $f_0\in\mathbb{Z}[q]$ such that $b_0(q)-t_0(q)=f_0(q)(1-q)$. Thus, $$a_1(q)=p^2f_0(q)+p(b_1(q)-t_1(q))+b_2(q)(1-q)\in\mathfrak{m}.$$
For this reason, $a_0(q), a_1(q)\in\mathfrak{m}$. \\
Now, we suppose that our claim is true for $n$ and we are going to show that it is also true for $n+1$. Indeed, we take $x\in\mathfrak{m}^{n+1}\cap\mathfrak{m}^{n+2}$ and we assume that $x=\sum\limits_{i=0}^{n+1}a_i(q)p^{n+1-i}(1-q)^i$, where $a_0(z),\ldots,a_{n+1}(q)\in\mathbb{Z}[q]$. We want to show that, for all $i\in\{0,\ldots,n+1\}$, $a_i(q)\in\mathfrak{m}$. As $x\in\mathfrak{m}^{n+2}$ then, $x=\sum\limits_{j=0}^{n+2}b_j(q)p^{n+2-j}(1-q)^j$, where $b_0(z),\ldots,b_{n+1}(q)$ belong to $\mathbb{Z}[q]$. So that,
		\begin{equation}\label{egalitee}
		\sum\limits_{i=0}^{n+1}a_i(q)p^{n+1-i}(1-q)^i=\sum\limits_{j=0}^{n+2}b_j(q)p^{n+2-j}(1-q)^j.
		\end{equation} 
		Substituting $q=1$ into this last equality we obtain $a_0(1)=b_0(1)p$. Hence, $\pi_p\circ\textbf{ev}(a_0(q))=0$. According to Remark~\ref{noyau3}, $\mathfrak{m}$ is the kernel of $\pi_p\circ\textbf{ev}$, then $a_0(q)\in\mathfrak{m}$. Therefore, there are $t_0(q),t_1(q)\in\mathbb{Z}[q]$ such that $a_0(q)=t_0(q)p+t_1(q)(1-q)$ and consequently, we have
		\begin{equation}\label{eq_trans}
		a_0(q)p^{n+1}=t_0(q)p^{n+2}+t_1(q)p^{n+1}(1-q).
		\end{equation}
		  Now, it follows from \eqref{egalitee} that
		$$
		(1-q)\sum_{i=0}^{n} a_{i+1}(q)p^{n-i}(1-q)^i  =
		\left(\sum_{j=0}^{n+2}b_j(q)p^{n+2-j}(1-q)^j\right)-a_0(q)p^{n+1}.$$
		Substituting \eqref{eq_trans} into this last equality, we obtain
		\begin{align*}
		&(1-q)\sum_{i=0}^{n} a_{i+1}(q)p^{n-i}(1-q)^i  \\
		& =  \left(\sum_{j=0}^{n+2}b_j(q)p^{n+2-j}(1-q)^j\right)-t_0(q)p^{n+2}-t_1(q)p^{n+1}(1-q)\\
		& 
		= p^{n+2}(b_0(q)-t_0(q))+p^{n+1}(1-q)(b_1(q)-t_1(q))+\sum_{j=2}^{n+2}b_j(q)p^{n+2-j}(1-q)^j.	
		\end{align*}
		
		For this reason, $\textbf{ev}(p^{n+2}(b_0(q)-t_0(q)))=p^{n+2}(b_0(1)-t_0(1))=0$ and consequently, there is $f_0(q)\in\mathbb{Z}[q]$ such that $b_0(q)-t_0(q)=f_0(q)(1-q)$. Then,
		\begin{equation}\label{eq_final}
		\sum_{i=0}^{n}a_{i+1}(q)p^{n-i}(1-q)^i=p^{n+1}(pf_0(q)+b_1(q)-t_1(q))+\sum_{j=1}^{n+1}b_{j+1}(q)p^{n+1-j}(1-q)^j.
		\end{equation} 
		Now put $y=\sum\limits_{i=0}^{n}a_{i+1}(q)p^{n-i}(1-q)^i$; then $y\in\mathfrak{m}^{n}$.  Furthermore, from Equality~\eqref{eq_final} we obtain $y\in\mathfrak{m}^{n+1}$. So that, $y\in\mathfrak{m}^n\cap\mathfrak{m}^{n+1}$. Therefore, by applying the induction hypothesis to $y$, we obtain $a_1(q),\ldots, a_{n+1}(q)\in\mathfrak{m}$. Therefore, $a_0(q),	 a_1(q),\ldots, a_{n+1}(q)\in\mathfrak{m}$.
	\end{proof}

\begin{proof}[Proof of Proposition~\ref{prop_m_adique3}]
	
	\smallskip

	\emph{Proof of (i).} 
	
	 According to Corollary 10.18 of \cite{atiyah}, we have $\bigcap_{n\geq0}\mathfrak{m}^n=\{0\}$ since $\mathbb{Z}[q]$ is a Noetherian ring  and $\mathfrak{m}\neq(1)$. Thus, $|x|_{\mathfrak{m}}=0$ if and only if, $x=0$.
	
	\smallskip
	
	\emph{Proof of (ii).}

	Suppose that $|x|_{\mathfrak{m}}=1/p^n$ and that $|y|_{\mathfrak{m}}=1/p^m$, where $n$ are $m$ are positive integers such that $x\in\mathfrak{m}^n\setminus\mathfrak{m}^{n+1}$ and $y\in\mathfrak{m}^m\setminus\mathfrak{m}^{m+1}$. If  $n\leq m$, then $\mathfrak{m}^{m}\subset\mathfrak{m}^{n}$ and for this reason $y\in\mathfrak{m}^n$. Therefore, $x+y\in\mathfrak{m}^n$. Then, $|x+y|_{\mathfrak{m}}\leq 1/p^n=\max\{|x|_{\mathfrak{m}},|y|_{\mathfrak{m}}\}.$ In a similar fashion, if $m\leq n$ we obtain $|x+y|_{\mathfrak{m}}\leq 1/p^m=\max\{|x|_{\mathfrak{m}},|y|_{\mathfrak{m}}\}.$
	
	\smallskip
	
	\emph{Proof of (iii).}

	We are going to show that $|xy|_{\mathfrak{m}}=|x|_{\mathfrak{m}}| y|_{\mathfrak{m}}$. Suppose that $|x|_{\mathfrak{m}}=1/p^n$ and that $|y|_{\mathfrak{m}}=1/p^m$, where $n$ and $m$ are non negative integers such that $x\in\mathfrak{m}^n\setminus\mathfrak{m}^{n+1}$ and $y\in~\mathfrak{m}^m\setminus\mathfrak{m}^{m+1}$. To show that $|xy|_{\mathfrak{m}}=|x|_{\mathfrak{m}}| y|_{\mathfrak{m}}$ it is sufficient to prove that $xy\in\mathfrak{m}^{n+m}\setminus\mathfrak{m}^{n+m+1}$. First, if $m=0=n$ then $x\notin\mathfrak{m}$ and $y\notin\mathfrak{m}$. But, $\mathfrak{m}$ is a maximal ideal of $\mathbb{Z}[q]$ and consequently, $xy\notin\mathfrak{m}$. Now, we assume that $m>0$ or $n>0$. As $x\in\mathfrak{m}^n$, we write $x=\sum\limits_{i=0}^{n}a_{i}(q)p^{n-i}(1-q)^i$ and since $x\notin\mathfrak{m}^{n+1}$, there is $i\in\{0,\ldots,n\}$ such that $a_i(q)\notin\mathfrak{m}$. Let $i_0$ be in $\{0,\ldots,n\}$ such that $a_{i_0}(q)\notin\mathfrak{m}$, and for all $i\in\{0,\ldots, i_0-1\}$, $a_i(q)\in\mathfrak{m}$.\\
	In a similar way, as $y\in\mathfrak{m}^m$ we write $y=\sum\limits_{j=0}^{m}b_{j}(q)p^{m-j}(1-q)^j$ and since $y\notin\mathfrak{m}^{m+1}$, there is $j\in\{0,\ldots,m\}$ such that $b_j(q)\notin\mathfrak{m}$. Let $j_0$ be in $\{0,\ldots,m\}$ such that $b_{j_0}(q)\notin\mathfrak{m}$, and for all $j\in\{1,\ldots,j_0-1\}$, $b_j(q)\in\mathfrak{m}$. Note that $$xy=\sum_{k=0}^{n+m}\left(\sum_{i+j=k}a_i(q)b_j(q)\right)p^{n+m-k}(1-q)^k\in\mathfrak{m}^{n+m}.$$ 
	To see that $xy\notin\mathfrak{m}^{n+m+1}$, assume for contradiction that $xy\in\mathfrak{m}^{n+m+1}$. It is clear that $xy\in\mathfrak{m}^{n+m}\cap\mathfrak{m}^{n+m+1}$. But,  $n+m>0$ since $n>0$ or $m>0$. Then, it follows from Lemma~\ref{lem_auxiliaure} that $\sum_{i+j=k}a_i(q)b_j(q)\in\mathfrak{m}$ for $k\in\{0,\ldots,n+m\}$. In particular,
	\begin{multline*}
	a_0(q)b_{i_0+j_0}(q)+\cdots+a_{i_{0}-1}(q)b_{j_{0}+1}(q)+a_{i_0}(q)b_{j_0}(q)\\
	+a_{i_{0}+1}(q)b_{j_0-1}(q)+\cdots+a_{i_0+j_0}(q)b_{0}(q)\in\mathfrak{m}.
	\end{multline*}
	As $a_i(q)\in\mathfrak{m}$ for $i<i_0$ then $a_0(q)b_{i_0+j_0}(q)+\cdots+a_{i_{0}-1}(q)b_{j_{0}+1}(q)\in\mathfrak{m}$. In a similar way, since  $b_j(q)\in\mathfrak{m}$ for $j<j_0$ then $a_{i_{0}+1}(q)b_{j_0-1}(q)+\cdots+a_{i_0+j_0}(q)b_{0}(q)\in\mathfrak{m}$. Therefore, $a_{i_0}(q)b_{j_0}(q)\in\mathfrak{m}$. According to Remark~\ref{noyau3}, the ideal $\mathfrak{m}$ is maximal and thus, $a_{i_0}(q)\in\mathfrak{m}$ or $b_{j_0}(q)\in\mathfrak{m}$. This is a contradiction of the fact that $a_{i_0}(q)$ and $b_{j_0}(q)$ do not belong to $\mathfrak{m}$.
	Therefore, $xy\notin\mathfrak{m}^{n+m+1}$. Then we have $|xy|_{\mathfrak{m}}=1/p^{n+m}=|x|_{\mathfrak{m}}|y|_{\mathfrak{m}}.$
\end{proof}

\begin{proof}[Proof of Proposition~\ref{prop: proprietes de E_q,p3}]

\smallskip
	
We recall that $S=\{P(z)\in\mathbb{Z}_{q,p}[z]:|P(0)|_{\mathfrak{m}}=1\}$. 
	
\smallskip
	
	\emph{Proof of (1).} 
	
	Let $x$ be an element of $E_{\mathbb{Z},q,p}$. By construction of the ring $E_{\mathbb{Z},q,p}$, there is a Cauchy sequence $\{x_n\}_{n\geq0}$ in $\mathbb{Z}_{q,p}[z]_S$ such that  $x=\lim_{n\to\infty}x_n$. For each integer $n>0$, we write $x_n=P_n(z)/Q_n(z)$ with $P_n(z)$, $Q_n(z)\in\mathbb{Z}_{q,p}[z]$ and $Q_n(z)\in S$. By definition of the set $S$, $|Q_n(0)|_{\mathfrak{m}}=1$. Since $\mathbb{Z}_{q,p}$ is a local ring (see Remark~\ref{rq: prop de Z_q,p3}) we obtain $1/Q_n(0)\in\mathbb{Z}_{q,p}$ and so, $1/Q_n(z)\in\mathbb{Z}_{q,p}[[z]]$. Thus, for each integer $n>0$, $x_n=P_n(z)/Q_n(z)\in\mathbb{Z}_{q,p}[[z]]$. As $\mathbb{Z}_{q,p}[[z]]$ is complete with respect to the Gauss norm, then $x\in \mathbb{Z}_{q,p}[[z]]$. Therefore, $E_{\mathbb{Z},q,p}\subset\mathbb{Z}_{q,p}[[z]]$.\\
	Now, we show that the residue ring of $E_{\mathbb{Z},q,p}$ is contained in $\mathbb{F}_p(z)$. Recall that for $a\in\mathbb{Z}_{q,p}$ we denote by $\overline{a}$  the reduction of $a$ modulo the maximal ideal of $\mathbb{Z}_{q,p}$. Since 	$E_{\mathbb{Z},q,p}$ is the completion of $\mathbb{Z}_{q,p}[z]_{S}$ with respect to the Gauss norm, the residue ring of $E_{\mathbb{Z},q,p}$ and the residue ring of $\mathbb{Z}_{q,p}[z]_{S}$ are the same. For this reason, it is sufficient to show that the residue ring of $\mathbb{Z}_{q,p}[z]_{S}$ is contained in $\mathbb{F}_p(z)$. Let us consider $\xi:\mathbb{Z}_{q,p}[z]_{S}\rightarrow\mathbb{F}_p(z)\cap\mathbb{F}_p[[z]]$ given by $$\xi\left(\frac{\sum_{i=0}^na_iz^i}{\sum_{j=0}^mb_jz^j}\right)=\frac{\sum_{i=0}^n\overline{a_i}z^i}{\sum_{j=0}^m\overline{b_j}z^j}.$$
	According to Remark~\ref{rq: prop de Z_q,p3}, the residue ring of $\mathbb{Z}_{q,p}$ is $\mathbb{F}_p$, and again by Remark~\ref{rq: prop de Z_q,p3}, if $a\in\mathbb{Z}_{q,p}$ then $|a|_{\mathfrak{m}}\leq1$ and for this reason $\overline{a}\in\mathbb{F}_p$. By definition, if $\sum_{j=0}^mb_jz^j\in S$ then $|b_0|_{\mathfrak{m}}=1$ and so, $\overline{b_0}\neq0$. Therefore, $1/\sum_{j=0}^m\overline{b_j}z^j\in\mathbb{F}_p[[z]]$.  From the above arguments we get that $\xi$ is well defined. Moreover, $\xi$ is a homomorphism of rings. The maximal ideal of $\mathbb{Z}_{q,p}[z]_{S}$ is the set of elements of $\mathbb{Z}_{q,p}[z]_{S}$ with norm less than 1. Now, $\xi\left(x_1(z)/x_2(z)\right)=0$ if and only if, $\left|x_1(z)/x_2(z)\right|_{\mathcal{G}}<1$, and this is a equivalent to saying that $x_1(z)/x_2(z)$ belongs to maximal ideal of $\mathbb{Z}_{q,p}[z]_{S}$. So, if $A$ is the residue ring of $\mathbb{Z}_{q,p}[z]_{S}$ we have the injective homomorphism $\overline{\xi}:A\rightarrow\mathbb{F}_p(z)\cap\mathbb{F}_p[[z]]$ given by $\overline{\xi}\left(\overline{x_1(z)/x_2(z)}\right)=\xi\left(x_1(z)/x_2(z)\right)$. Therefore, the residue ring of $E_{\mathbb{Z},q,p}$ is contained in $\mathbb{F}_p(z)\cap\mathbb{F}_p[[z]]$.
	
	\medskip
	
	\emph{Proof of (2).} 
	
	Consider $\sigma_q:\mathbb{Z}_{q,p}[[z]]\rightarrow\mathbb{Z}_{q,p}[[z]]$ given by $\sigma_q(\sum_{n\geq0}a_nz^n)=\sum_{n\geq0}a_nq^nz^n$. Firstly,  $\sigma_q$ is a homomorphism of rings and it is continuous for the Gauss norm because $|q|_{\mathfrak{m}}=1$. Now, we are going to show that $\sigma_q(E_{\mathbb{Z},q,p})\subset E_{\mathbb{Z},q,p}$. In oder to show this, it is sufficient to prove that $\sigma_q(\mathbb{Z}_{q,p}[z]_S)\subset\mathbb{Z}_{q,p}[z]_S$ because $\sigma_q$ is a continuous homomorphism for the Gauss norm and $E_{\mathbb{Z},q,p}$ is the completion of $\mathbb{Z}_{q,p}[z]_S$ with respect to the Gauss norm. Indeed, it is clear that if $Q(z)\in S$ then $Q(qz)\in S$, and thus if $\frac{P(z)}{Q(z)}\in\mathbb{Z}_{q,p}[z]_{S}$ then $\sigma_q\left(\frac{P(z)}{Q(z)}\right)=\frac{P(qz)}{Q(qz)}\in\mathbb{Z}_{q,p}[z]_S$. Therefore, $\sigma_q(\mathbb{Z}_{q,p}[z]_S)\subset\mathbb{Z}_{q,p}[z]_S$. Consequently, the homomorphism $\sigma_q:E_{\mathbb{Z},q,p}\rightarrow E_{\mathbb{Z},q,p}$ given by $\sigma_q(\sum_{n\geq0}a_nz^n)=\sum_{n\geq0}a_nq^nz^n$ is continuous.\\	
	Secondly, let us consider $\delta_q:\mathbb{Z}_{q,p}[[z]]\rightarrow\mathbb{Z}_{q,p}[[z]]$ given by $\delta_q(f)=\frac{\sigma_q(f)-f}{q-1}$. Show that $\delta_q$ is well defined. That is, we want to show that for each $\sum_{n\geq0} a_nz^n\in\mathbb{Z}_{q,p}[[z]]$, $\delta_q(\sum_{n\geq0} a_nz^n)\in\mathbb{Z}_{q,p}[[z]]$. In fact, we have
	$$\delta_q\left(\sum_{n\geq0}a_nz^n\right)=\sum_{n\geq0}\frac{q^n-1}{q-1}a_nz^n=\sum_{n\geq1}(1+q+\cdots+q^{n-1})a_nz^n.$$
Consequently, $\delta_q:\mathbb{Z}_{q,p}[[z]]\rightarrow\mathbb{Z}_{q,p}[[z]]$ is well defined. Moreover, the endomorphism $\delta_q$ is continuous for the Gauss norm because, for every integer $n>0$, $|1+q+\cdots+q^{n-1}|_{\mathfrak{m}}\leq1$ and, in consequence, for each $\sum_{n\geq0}a_nz^n\in\mathbb{Z}_{q,p}[[z]]$, $$\left|\delta_q\left(\sum_{n\geq0}a_nz^n\right)\right|_{\mathcal{G}}\leq\left|\sum_{n\geq0}a_nz^n\right|_{\mathcal{G}}.$$
	Since $\delta_q$ preserves addition, it follows from the last inequality that $\delta_q$ is continuous.\\
	 Now, we are going to show that $\delta_q(E_{\mathbb{Z},q,p})\subset\ E_{\mathbb{Z},q,p}$. In order to get this, it is sufficient to prove that  $\delta_q(\mathbb{Z}_{q,p}[z]_S)\subset\mathbb{Z}_{q,p}[z]_S$ because $\delta_q$ is continuous for the Gauss norm and $E_{\mathbb{Z},q,p}$ is the completion of $\mathbb{Z}_{q,p}[z]_S$ with respect to the Gauss norm. Indeed, if $x(z)=\frac{\sum_{i=0}^na_iz^i}{\sum_{j=0}^mb_jz^j}\in\mathbb{Z}_{q,p}[z]_S$, then
	\begin{align*}
	\delta_q(x(z))=\frac{x(qz)-x(z)}{q-1}&=\frac{1}{q-1}\frac{\sum_{i=0}^na_iq^iz^i\sum_{j=0}^mb_jz^j-\sum_{i=0}^na_iz^i\sum_{j=0}^mb_jq^jz^j}{\sum_{i=0}^mb_jz^j\sum_{j=0}^mb_jq^jz^j}\\
	&=\frac{1}{q-1}\frac{\sum_{k=0}^{n+m}\left(\sum_{i+j=k}a_ib_j(q^i-q^j)\right)z^k}{\sum_{i=0}^mb_jz^j\sum_{j=0}^mb_jq^jz^j}\\
	&=\sum_{k=0}^{n+m}\left(\sum_{i+j=k}a_ib_j\frac{(q^i-q^j)}{q-1}\right)z^k\cdot\frac{1}{\sum_{i=0}^mb_jz^j\sum_{j=0}^mb_jq^jz^j}.
	\end{align*}
	On the one hand, $\sum_{k=0}^{n+m}\left(\sum_{i+j=k}a_ib_j\frac{(q^i-q^j)}{q-1}\right)z^k$ belongs to $\mathbb{Z}_{q,p}[z]$ because, for every $(i,j)~\in\mathbb{Z}^2_{\geq0}$, $\frac{(q^i-q^j)}{q-1}$ belongs to $\mathbb{Z}[q]$. On the other hand, since $S$ is a multiplicative set and the polynomials $\sum_{i=0}^nb_jz^j, \sum_{i=0}^nb_jq^jz^j$ belong to  S, it follows that $\sum_{i=0}^nb_jz^j\sum_{j=0}^mb_jq^jz^j\in S$. Then, $\delta_q(x(z))\in\mathbb{Z}_{q,p}[z]_S$. Thus, $\delta_q(\mathbb{Z}_{q,p}[z]_S)\subset\mathbb{Z}_{q,p}[z]_S$. \\
	Finally, since $E_{\mathbb{Z},q,p}$ is the completion of $\mathbb{Z}_{q,p}[z]_S$ with respect to the Gauss norm and $\delta_q:\mathbb{Z}_{q,p}[z]_S\rightarrow\mathbb{Z}_{q,p}[z]_S$ is continuous, we deduce that the endomorphism  $\delta_q:E_{\mathbb{Z},q,p}\rightarrow E_{\mathbb{Z},q,p}$ given by $\delta_q(f)=\frac{\sigma_q(f)-f}{q-1}$ is continuous.
	
	\medskip
	
	\emph{Proof of (3).} 
	
	Let $\textbf{F}_q$ be the endomorphism of $\mathbb{Z}_{q,p}[[z]]$ given by $\textbf{F}_q(\sum_{n\geq0}a_nz^n)=\sum_{n\geq0}\textbf{F}_q(a_n)z^{np}$, where $\textbf{F}_q:\mathbb{Z}_{q,p}\rightarrow\mathbb{Z}_{q,p}$ is the Frobenius endomorphism of $\mathbb{Z}_{q,p}$.\\
	We are going to show that $\textbf{F}_q:\mathbb{Z}_{q,p}[[z]]\rightarrow\mathbb{Z}_{q,p}[[z]]$ is a Frobenius endomorphism. From Proposition~\ref{prop: frob Z_{q,p} 3}, we know that for each $a\in\mathbb{Z}_{q,p}$, $|\textbf{F}_q(a)|_{\mathfrak{m}}\leq|a|_{\mathfrak{m}}$. Then, for every $\sum_{n\geq0}a_nz^n\in\mathbb{Z}_{q,p}[[z]]$, we have $$\left|\textbf{F}_q\left(\sum_{n\geq0}a_nz^n\right)\right|_{\mathcal{G}}=\sup\left\{|\textbf{F}_q(a_n)|_{\mathfrak{m}}\right\}_{n\geq0}\leq\sup\{|a_n|_{\mathfrak{m}}\}_{n\geq0}=\left|\sum_{n\geq0}a_nz^n\right|_{\mathcal{G}}.$$
	Since $\textbf{F}_q:\mathbb{Z}_{q,p}[[z]]\rightarrow\mathbb{Z}_{q,p}[[z]]$ is an endomorphism of rings, this last inequality implies that $\textbf{F}_q:\mathbb{Z}_{q,p}[[z]]\rightarrow\mathbb{Z}_{q,p}[[z]]$ is continuous.\\
	Now, we are going to prove that, for every $\sum_{n\geq0}a_nz^n\in\mathbb{Z}_{q,p}[[z]]$, $$\left|\textbf{F}_q\left(\sum_{n\geq0}a_nz^n\right)-\left(\sum_{n\geq0}a_nz^n\right)^p\right|_{\mathcal{G}}<1.$$ Let $\mathfrak{m}_{\mathbb{Z}_{q,p}}$ be the maximal ideal of $\mathbb{Z}_{q,p}$. As $\textbf{F}_q:\mathbb{Z}_{q,p}\rightarrow\mathbb{Z}_{q,p}$ is the Frobenius endomorphism of $\mathbb{Z}_{q,p}$ and, by Remark~\ref{rq: prop de Z_q,p3},  $\mathbb{Z}_{q,p}/\mathfrak{m}_{\mathbb{Z}_{q,p}}=\mathbb{F}_p$,  then we have
	\begin{align*}
	\textbf{F}_q\left(\sum_{n\geq0}a_nz^n\right)&=\sum_{n\geq0}\textbf{F}_q(a_n)z^{np}\\
	&\equiv\sum_{n\geq0}a_n^pz^{np}\mod\mathfrak{m}_{\mathbb{Z}_{q,p}}\mathbb{Z}_{q,p}[[z]]\\
	&\equiv\left(\sum_{n\geq0}a_nz^{n}\right)^p\mod\mathfrak{m}_{\mathbb{Z}_{q,p}}\mathbb{Z}_{q,p}[[z]].
	\end{align*}
	Since the $\mathfrak{m}$-adic norm is ultrametric, we easily check that for each $a\in\mathbb{Z}_{q,p}$, $a\in\mathfrak{m}_{\mathbb{Z}_{q,p}}$ if and only if $|a|_{\mathfrak{m}}\leq1/p$. Consequently,  $$\left|\textbf{F}_q\left(\sum_{n\geq0}a_nz^n\right)-\left(\sum_{n\geq0}a_nz^n\right)^p\right|_{\mathcal{G}}\leq1/p<1.$$
	Now, we show that $\textbf{F}_q(E_{\mathbb{Z},q,p})\subset\ E_{\mathbb{Z},q,p}$. To show this, it is sufficient to prove that $\textbf{F}_q(\mathbb{Z}_{q,p}[z]_S)\subset\mathbb{Z}_{q,p}[z]_S$ because $\textbf{F}_q$ is continuous for the Gauss norm and $E_{\mathbb{Z},q,p}$ is the completion of $\mathbb{Z}_{q,p}[z]_S$ with respect to the Gauss norm. Indeed, let $P(z)/Q(z)$ be in $\mathbb{Z}_{q,p}[z]_S$, where $P(z)\in\mathbb{Z}_{q,p}[z]$ and $Q(z)\in S$. By definition of the set $S$, $|Q_n(0)|_{\mathfrak{m}}=1$ and since $\mathbb{Z}_{q,p}$ is a local complete ring, $1/Q_n(0)\in\mathbb{Z}_{q,p}$ and so, $1/Q_n(z)\in\mathbb{Z}_{q,p}[[z]]$. As $\textbf{F}_q:\mathbb{Z}_{q,p}[[z]]\rightarrow\mathbb{Z}_{q,p}[[z]]$ is an endomorphism of rings then $\textbf{F}_q\left(\frac{P(z)}{Q(z)}\right)=\frac{\textbf{F}_q(P(z))}{\textbf{F}_q(Q(z))}$. It is clear that $\textbf{F}_q(P(z))\in\mathbb{Z}_{q,p}[z]$. Now, we are going to see that $\textbf{F}_q(Q(z))\in S$. We write $Q(z)=\sum_{j=0}^mb_jz^j$. We have $|b_0|_{\mathfrak{m}}=1$ since $Q(z)\in S$. By definition, $\textbf{F}_q(Q(z))=\sum_{j=0}^m\textbf{F}_q(b_n)z^{np}$.  Since $\textbf{F}_q:\mathbb{Z}_{q,p}\rightarrow\mathbb{Z}_{q,p}$ is the Frobenius endomorphism of $\mathbb{Z}_{q,p}$, $|\textbf{F}_q(b_0)-b_0^p|_{\mathfrak{m}}<1$ and as the $\mathfrak{m}$-adic norm is ultrametric, and $|b_0^p|_{\mathfrak{m}}=1$, then we have $$|\textbf{F}_q(b_0)|_{\mathfrak{m}}=|\textbf{F}_q(b_0)-b_0^p+b_0^p|_{\mathfrak{m}}=\max\{|\textbf{F}_q(b_0)-b_0^p|_{\mathfrak{m}},|b_0^p|_{\mathfrak{m}}\}=1.$$
	Therefore, $\textbf{F}_q(Q(z))\in S$ and $\textbf{F}_q\left(\frac{P(z)}{Q(z)}\right)\in\mathbb{Z}_{q,p}[z]_S$. For this reason, we have $\textbf{F}_q(\mathbb{Z}_{q,p}[z]_S)\subset\mathbb{Z}_{q,p}[z]_S$. Finally, we conclude that the endomorphism $\textbf{F}_q:E_{\mathbb{Z},q,p}\rightarrow E_{\mathbb{Z},q,p}$ given by $$\textbf{F}_q\left(\sum_{n\geq0}a_nz^n\right)=\sum_{n\geq0}\textbf{F}_q(a_n)z^{np}$$ is a Frobenius endomorphism of $E_{\mathbb{Z},q,p}$.
	
	\medskip
	
	\emph{Proof of (4).} 
	
	We consider $\textbf{ev}:\mathbb{Z}_{q,p}[[z]]\rightarrow\mathbb{Z}_p[[z]]$ given by $\textbf{ev}(\sum_{n\geq0}a_nz^n)=\sum_{n\geq0}\textbf{ev}(a_n)z^n$, where $\textbf{ev}:\mathbb{Z}_{q,p}\rightarrow\mathbb{Z}_p$ is the continuous homomorphism constructed in Remark~\ref{remarque: extension de ev y F_q}. For this reason, the homomorphism  $\textbf{ev}:\mathbb{Z}_{q,p}[[z]]\rightarrow\mathbb{Z}_p[[z]]$ is continuous.  Now, we are going to show that $\textbf{ev}(\mathbb{Z}_{q,p}[z]_S)\subset E_p\cap\mathbb{Z}_p[[z]]$. Let $P(z)/Q(z)$ be in $\mathbb{Z}_{q,p}[z]_S$, where $P(z)\in\mathbb{Z}_{q,p}[z]$ and $Q(z)\in S$. We write $Q(z)=\sum_{j=0}^mb_jz^j$. Since $Q(z)\in S$, $|b_0|_{\mathfrak{m}}=1$. Recall that $\mathbb{Z}_{q,p}$ is the completion of $\mathbb{Z}[q]_{\mathfrak{m}}$ with respect to the $\mathfrak{m}$-adic norm. Then $b_0=\lim_{n\to\infty}x_n$, where $x_n\in\mathbb{Z}[q]_{\mathfrak{m}}$. In particular, for $\epsilon>0$ sufficiently small, there is an integer $N>0$ such that, for all $n\geq N$, $|b_0-x_n|_{\mathfrak{m}}<\epsilon$. As the norm is ultrmetric and $|b_0|_{\mathfrak{m}}=1$, then for all $n\geq N$, $|x_n|_{\mathfrak{m}}=1$. We write $x_n=R_n(q)/T_n(q)\in\mathbb{Z}[q]_{\mathfrak{m}}$, where $R_n(q)$, $T_n(q)\in\mathbb{Z}[q]$ and $T_n(q)\notin\mathfrak{m}$. Since, for all $n\geq N$, $|x_n|_{\mathfrak{m}}=1$ and $|T_n(q)|_{\mathfrak{m}}=1$, we have $|R_n(q)|_{\mathfrak{m}}=1$. Therefore, for all $n\geq N$, $R_n(q)$ and $T_n(q)$ do not belong to $\mathfrak{m}$. Then, it follows from Remark~\ref{noyau3} that, for all $n\geq N$, $\textbf{ev}(R_n(q))$ and $\textbf{ev}(T_n(q))$ do not belong to $p\mathbb{Z}$. For this reason, for all $n\geq N$, $|\textbf{ev}(x_n)|_p=1$. By construction of $\textbf{ev}:\mathbb{Z}_{q,p}\rightarrow\mathbb{Z}_p$ we have $\textbf{ev}(b_0)=\lim_{n\to\infty}\textbf{ev}(x_n)$. Since the norm is utrametric and, for all $n\geq N$, $|\textbf{ev}(x_n)|_p=1$, then $|\textbf{ev}(b_0)|_{p}=1$. Thus, $1/\textbf{ev}(Q(z))\in\mathbb{Z}_p[[z]]$. Finally, it is clear that $\textbf{ev}(P(z))\in\mathbb{Z}_p[z]$. Thus, $\textbf{ev}(P(z))/\textbf{ev}(Q(z))\in E_p$ because $\mathbb{Q}_p(z)\subset E_p$ and $\textbf{ev}(P(z))/\textbf{ev}(Q(z))\in\mathbb{Q}_p(z)$. Then, we have $$\textbf{ev}\left(\frac{P(z)}{Q(z)}\right)=\frac{\textbf{ev}(P(z))}{\textbf{ev}(Q(z))}\in E_p\cap\mathbb{Z}_p[[z]].$$
	Therefore, the homomorphism $\textbf{ev}:\mathbb{Z}_{q,p}[z]_S\rightarrow E_p\cap\mathbb{Z}_p[[z]]$ is well defined. Moreover, it is continuous since $\textbf{ev}:\mathbb{Z}_{q,p}\rightarrow\mathbb{Z}_p$ is continuous. Then, by continuity, $\textbf{ev}:\mathbb{Z}_{q,p}[z]_S\rightarrow E_p\cap\mathbb{Z}_p[[z]]$ extends to the continuous homomorphism $\textbf{ev}:E_{\mathbb{Z},q,p}\rightarrow E_p\cap\mathbb{Z}_p[[z]]$.
	
	\medskip
	
	\emph{Poroof of (5).} 
	
	Consider the continuous endomorphism  $\textbf{F}_q:\mathbb{Z}_{q,p}\rightarrow\mathbb{Z}_{q,p}$ and the continuous homomorphism $\textbf{ev}:\mathbb{Z}_{q,p}\rightarrow\mathbb{Z}_p$. By Proposition~\ref{prop: frob Z_{q,p} 3},  for each $a\in\mathbb{Z}_{q,p}$, we have
	\begin{equation}\label{eq: ev com F= ev}
	\textbf{ev}(\textbf{F}_q(a))=\textbf{ev}(a).
	\end{equation}
	Consider the continuous endomorphism $\textbf{F}_q:\mathbb{Z}_{q,p}[[z]]\rightarrow\mathbb{Z}_{q,p}[[z]]$ given by $\textbf{F}_q(\sum_{n\geq0}a_nz^n)=\sum_{n\geq0}\textbf{F}_q(a_n)z^{np}$ and the continuous  homomorphism $\textbf{ev}:E_{\mathbb{Z}, q,p}\rightarrow E_p\cap\mathbb{Z}_p[[z]]$ given by $\textbf{ev}(\sum_{n\geq0}a_nz^n)=\sum_{n\geq0}\textbf{ev}(a_n)z^n$. For each integer $h>0$, we want to see that for these last two homomorphisms we have $\textbf{ev}\circ\textbf{F}_q^h=\textbf{F}^h\circ\textbf{ev}$, where $\textbf{F}:\mathbb{Z}_p[[z]]\rightarrow \mathbb{Z}_p[[z]]$ verifies $\textbf{F}(z)=z^p$. Let $\sum_{n\geq0}a_nz^n$ be in $\mathbb{Z}_{q,p}$[[z]]. Then, we have $$\textbf{ev}\left(\textbf{F}_q^h\left(\sum_{n\geq0}a_nz^n\right)\right)=\sum_{n\geq0}\textbf{ev}(\textbf{F}^h_q(a_n))z^{np^{h}}$$ and $$\textbf{F}^h\left(\textbf{ev}\left(\sum_{n\geq0}a_nz^n\right)\right)=\sum_{n\geq0}\textbf{ev}(a_n)z^{np^h}.$$ So, $\textbf{ev}\left(\textbf{F}_q^h\left(\sum_{n\geq0}a_nz^n\right)\right)=\textbf{F}^h\left(\textbf{ev}\left(\sum_{n\geq0}a_nz^n\right)\right)$ if only if, for every integer $n\geq0$, $\textbf{ev}(\textbf{F}_q^h(a_n))=\textbf{ev}(a_n)$. But from \eqref{eq: ev com F= ev} we get that, for every integer $n\geq0$, $\textbf{ev}(\textbf{F}_q^h(a_n))=\textbf{ev}(a_n)$ and consequently, $\textbf{ev}\left(\textbf{F}_q^h\left(\sum_{n\geq0}a_nz^n\right)\right)=\textbf{F}^h\left(\textbf{ev}\left(\sum_{n\geq0}a_nz^n\right)\right)$. 
	
\end{proof}

\section{Proof of  Theorem~\ref{confluence3} and Theorem~\ref{alg3}}\label{sec_proof}

In this section we prove Theorem~\ref{confluence3} and Theorem~\ref{alg3} .  We start by proving Theorem~\ref{confluence3}.

\begin{proof}[Proof of Theorem~\ref{confluence3}]
	 We are going to show that the differential operator $$\textbf{ev}(L_{\delta_q}):=\delta^n+\textbf{ev}(b_1(z))\delta^{n-1}+\cdots+\textbf{ev}(b_1(z))\delta+\textbf{ev}(b_n(z))\in E_p[\delta]$$
	has a strong Frobenius structure with period $h$. By definition, the matrix associated  with $L_{\delta_q}$ is
	\[B_q=\begin{pmatrix}
	1 & (q-1) & \cdots & 0 \\
	0 & 1 & \ddots & 0\\
	\vdots & \vdots & \ddots & \vdots\\
	0 & 0 & \cdots & (q-1)\\
	-(q-1)b_n(z) & -(q-1)b_{n-1}(z) & \cdots & -(q-1)b_1(z)+1\end{pmatrix}.\]
	Then,  
	\[
	\frac{B_q-I_n}{q-1}=\begin{pmatrix}
	0 & 1 & \cdots & 0 \\
	0 & 0 & \ddots & 0\\
	\vdots & \vdots & \ddots & \vdots\\
	0 & 0 & \cdots & 1\\
	-b_n(z) & -b_{n-1}(z) & \cdots & -b_1(z)\end{pmatrix}.\]
	So that, $B:=\textbf{ev}\left(\frac{B_q-Id_n}{q-1}\right)$ is the companion matrix of the differential operator  $\textbf{ev}(L_{\delta_q})$.  Note that $B$ is a matrix with coefficients in $E_p\cap\mathbb{Z}_p[[z]]$.
	By hypotheses,  there is $G_q\in\rm{ GL}_n(E_{\mathbb{Z,}q,p})$ such that
	\begin{equation}\label{depart}
	\sigma_q(G_q)B^{\textbf{F}^h_q}_{q}=B_qG_q.
	\end{equation}	 
	 It follows from Equality~\eqref{depart} that
	\begin{align*}
	\sigma_q(G_q)B^{\textbf{F}^h_q}_{q}=B_qG_q&\iff\sigma_q(G_q)B^{\textbf{F}^h_q}_q-I_n=B_qG_q-I_n\\
	&\iff\sigma_q(G_q)B^{\textbf{F}^h_q}_q-\sigma_q(G_q)+\sigma_q(G_q)-I_n=B_qG_q-G_q+G_q-I_n\\
	&\iff\sigma_q(G_q)[B^{\textbf{F}^h_q}_q-I_n]+\sigma_q(G_q)-Id_n=[B_q-Id_n]G_q+G_q-I_n\\
	&\iff\sigma_q(G_q)\left(\frac{B^{\textbf{F}^h_q}_q-I_n}{q-1}\right)+\frac{\sigma_q(G_q)-I_n}{q-1}=\left(\frac{B_q-I_n}{q-1}\right)G_q+\frac{G_q-I_n}{q-1}\\
	&\iff\frac{\sigma_q(G_q)-I_n}{q-1}-\frac{G_q-I_n}{q-1}=\left(\frac{B_q-I_n}{q-1}\right)G_q-\sigma_q(G_q)\left(\frac{B^{\textbf{F}^h_q}_q-I_n}{q-1}\right)\\
	\end{align*}
From this last equality we obtain
\begin{equation}\label{d3}
\frac{\sigma_q(G_q)-G_q}{q-1}=\left(\frac{B_q-I_n}{q-1}\right)G_q-\sigma_q(G_q)\left(\frac{B^{\textbf{F}^h_q}_q-I_n}{q-1}\right).
\end{equation}	
	For  $\sum_{i\geq0}a_iz^i\in\mathbb{Z}_{q,p}[[z]]$ we have $$\textbf{ev}\left(\sigma_q\left(\sum_{i\geq0}a_iz^i\right)\right)=\textbf{ev}\left(\sum_{i\geq0}a_iq^iz^i\right)=\sum_{i\geq0}\textbf{ev}(a_iq^i)z^i=\sum_{i\geq0}\textbf{ev}(a_i)\textbf{ev}(q^i)z^i.$$
But $\textbf{ev}(q)=1$ and for this reason 
	$$\textbf{ev}\left(\sigma_q\left(\sum_{i\geq0}a_iz^i\right)\right)=\sum_{i\geq0}\textbf{ev}(a_i)z^i=\textbf{ev}\left(\sum_{i\geq0}a_iz^i\right).$$  
	 Therefore, $\textbf{ev}(\sigma_q(G_q))=\textbf{ev}(G_q)$. Now, if $\sum_{i\geq0}a_iz^i$ belongs to $\mathbb{Z}_{q,p}[[z]]$ , then
	\begin{align*}
	\frac{\sigma_q(\sum_{i\geq0}a_iz^i)-\sum_{i\geq0}a_iz^i}{q-1}=&\frac{\sum_{i\geq0}a_iq^iz^i-\sum_{i\geq0}a_iz^i}{q-1}\\=&\frac{\sum_{i\geq1}(q^i-1)a_iz^i}{q-1}\\
	=&\sum_{i\geq1}(1+q\cdots+q^{i-1})a_iz^i\in\mathbb{Z}_{q,p}[[z]].
	\end{align*}
	As for all $i\geq1$, $\textbf{ev}(1+q\cdots+q^{i-1})=i$ then, $$\textbf{ev}\left(\frac{\sigma_q(\sum_{i\geq0}a_iz^i)-\sum_{i\geq0}a_iz^i}{q-1}\right)=\sum_{i\geq1}i\textbf{ev}(a_i)z^i=\delta\textbf{ev}\left(\sum_{i\geq0}a_iz^i\right),$$
	where $\delta=z\frac{d}{dz}$. For this reason, $\textbf{ev}\left(\frac{\sigma_q(G_q)-G_q}{q-1}\right)=\delta\textbf{ev}(G_q).$
	Now,  from the definition of the endomorphism $\textbf{F}_q$ we have
	\[\frac{B^{\textbf{F}^h_q}_q-I_n}{q-1}=\begin{pmatrix}
	0 & \frac{(q^{p^h}-1)}{q-1} & \cdots & 0 \\
	0 & 0 & \ddots & 0\\
	\vdots & \vdots & \ddots & \vdots\\
	0 & 0 & \cdots & \frac{(q^{p^h}-1)}{q-1}\\
	-\frac{(q^{p^h}-1)}{q-1}\textbf{F}_q(b_n(z)) & -\frac{(q^{p^h}-1)}{q-1}\textbf{F}_q(b_{n-1}(z)) & \cdots & -\frac{(q^{p^h}-1)}{q-1}\textbf{F}_q(b_1(z))\end{pmatrix}.\]
	We know that $\textbf{ev}\left(\frac{q^{p^h}-1}{q-1}\right)=p^h$.  For every $i\in\{1,\ldots,n\}$,  put $b_i(z)=\sum_{j\geq0}b_{j,i}z^{j}$.  So, $\textbf{F}^h_q(\sum_{j\geq0}b_{j,i}z^{j})=\sum_{j\geq0}\textbf{F}_q^h(b_{j,i})z^{jp^h}$.  Then,  according to the point 5 of Proposition~\ref{prop: proprietes de E_q,p3}, $$\textbf{ev}\left(\textbf{F}_q^h\left(\sum_{j\geq0}b_{j,i}z^{j}\right)\right)=\textbf{ev}\left(\sum_{j\geq0}b_{j,i}z^{jp^h}\right).$$ Thus,
	$$\textbf{ev}\left(\frac{B_q^{\textbf{F}_q^h}-I_n}{q-1}\right)=p^h\textbf{ev}\left(\frac{B_q-I_n}{q-1}\right)(z^{p^h})=p^hB(z^{p^h})=p^hB^{\textbf{F}^h},$$
	where $\textbf{F}:\mathbb{Z}_p[[z]]\rightarrow\mathbb{Z}_p[[z]]$  is the Frobenius endomorphism of $\mathbb{Z}_p[[z]]$. Then, by applying  $\textbf{ev}$ to  \eqref{d3}, we obtain
	\begin{equation}\label{ff}
	\delta(\textbf{ev}(G_q))=B\textbf{ev}(G_q)-\textbf{ev}(G_q)p^hB^{\textbf{F}^h}.
	\end{equation} 
	As $G_q\in\rm{ GL}_n(E_{\mathbb{Z},q,p})$, then $\textbf{ev}(G_q)\in\rm{ GL}_n(E_p)$. For this reason, we get from Equality~\eqref{ff}  that the differential operator $\textbf{ev}(L_{\delta_q})$ has a strong Frobenius structure with period  $h$.
\end{proof}

In order to prove Theorem~\ref{alg3} we need some auxiliary propositions.  

\subsection{Auxiliary propositions}

 Let $L_q$ be in $E_{\mathbb{Z},q,p}[\sigma_q]$  of order $n$ and let $A_q$ be the companion matrix of $L_q$. We set $$N(A_q)=\{\vec{y}\in\mathbb{Z}_{q,p}[[z]]^n\quad\textup{such that}\quad\sigma_q\vec{y}=A_q\vec{y}\}.$$ 
 The set $N(A_q)$ is naturally a $\mathbb{Z}_{q,p}$-module. Recall that $\textbf{F}_q:\mathbb{Z}_{q,p}[[z]]\rightarrow\mathbb{Z}_{q,p}[[z]]$ is  the endomorphism given by $\textbf{F}_q(\sum_{i\geq0}a_iz^i)=\sum_{i\geq0}\textbf{F}_q(a_i)z^{ip}$.
In the following proposition we prove that the set $N(A_q)$ comes equipped with a  Frobenius action. 
\begin{prop}\label{actionfrob3}
	Let $L_q$ be in $E_{\mathbb{Z},q,p}[\sigma_q]$ of order $n$ and let $A_q$ be the companion matrix of $L_q$.  Suppose that $L_q$ has a strong Frobenius structure with period $h$ and let $H_q$ be a Frobenius transition matrix for $L_q$. Then the application $\psi:N(A_q)\rightarrow N(A_q)$ given by $$\psi((f_1,\ldots,f_n)^t)=H_q(\textbf{F}_q^h(f_1),\ldots,\textbf{F}_q^h(f_n))^t$$ 
	is a $\textbf{F}_q^h$-semilinear endomorphism.
\end{prop}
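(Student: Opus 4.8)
The plan is to check three things in turn: that $\psi$ sends $N(A_q)$ into $N(A_q)$ (well-definedness), that $\psi$ is additive, and that $\psi(a\vec y)=\textbf{F}_q^h(a)\psi(\vec y)$ for $a\in\mathbb{Z}_{q,p}$. The computational engine is the commutation of $\sigma_q$ with $\textbf{F}_q$ on $\mathbb{Z}_{q,p}[[z]]$ together with the defining relation~\eqref{expressionsff3} of the strong Frobenius structure; everything else is formal.

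First I would record that $\sigma_q\circ\textbf{F}_q=\textbf{F}_q\circ\sigma_q$ on $\mathbb{Z}_{q,p}[[z]]$: applying either composite to $\sum_{n\ge0}a_nz^n$ produces $\sum_{n\ge0}\textbf{F}_q(a_n)\,q^{np}z^{np}$, using $\textbf{F}_q(q)=q^p$ and that $\textbf{F}_q$ is a ring endomorphism. Iterating gives $\sigma_q\circ\textbf{F}_q^h=\textbf{F}_q^h\circ\sigma_q$. Writing $\textbf{F}_q^h(\vec y)$ for the entrywise image and using that $\textbf{F}_q^h$ is a ring endomorphism of $\mathbb{Z}_{q,p}[[z]]$ (so $\textbf{F}_q^h(M\vec v)=M^{\textbf{F}_q^h}\textbf{F}_q^h(\vec v)$ for any matrix $M$ over $\mathbb{Z}_{q,p}[[z]]$), I compute, for $\vec y\in N(A_q)$,
\begin{align*}
\sigma_q\bigl(\psi(\vec y)\bigr)
&=\sigma_q(H_q)\,\sigma_q\bigl(\textbf{F}_q^h(\vec y)\bigr)
=\sigma_q(H_q)\,\textbf{F}_q^h\bigl(\sigma_q(\vec y)\bigr)
=\sigma_q(H_q)\,\textbf{F}_q^h\bigl(A_q\vec y\bigr)\\
&=\sigma_q(H_q)\,A_q^{\textbf{F}_q^h}\,\textbf{F}_q^h(\vec y)
=A_qH_q\,\textbf{F}_q^h(\vec y)
=A_q\,\psi(\vec y),
\end{align*}
where the third equality uses $\sigma_q\vec y=A_q\vec y$ and the fifth uses Equality~\eqref{expressionsff3}, $\sigma_q(H_q)A_q^{\textbf{F}_q^h}=A_qH_q$. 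Thus $\psi(\vec y)$ solves $\sigma_q X=A_qX$. It remains to see that $\psi(\vec y)$ has coordinates in $\mathbb{Z}_{q,p}[[z]]$: $\textbf{F}_q^h$ preserves $\mathbb{Z}_{q,p}[[z]]$, and $H_q$ has entries in $E_{\mathbb{Z},q,p}\subset\mathbb{Z}_{q,p}[[z]]$ by the first point of Proposition~\ref{prop: proprietes de E_q,p3}, so the product $H_q\textbf{F}_q^h(\vec y)$ stays in $\mathbb{Z}_{q,p}[[z]]^n$. Hence $\psi$ maps $N(A_q)$ into $N(A_q)$.

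Additivity is immediate since $\textbf{F}_q^h$ is additive and matrix multiplication distributes: $\psi(\vec y+\vec z)=H_q\textbf{F}_q^h(\vec y+\vec z)=H_q\textbf{F}_q^h(\vec y)+H_q\textbf{F}_q^h(\vec z)=\psi(\vec y)+\psi(\vec z)$. For $a\in\mathbb{Z}_{q,p}$, since $\textbf{F}_q$ is a ring homomorphism we have $\textbf{F}_q^h(a\vec y)=\textbf{F}_q^h(a)\,\textbf{F}_q^h(\vec y)$ with $\textbf{F}_q^h(a)\in\mathbb{Z}_{q,p}$ a scalar, hence $\psi(a\vec y)=H_q\bigl(\textbf{F}_q^h(a)\,\textbf{F}_q^h(\vec y)\bigr)=\textbf{F}_q^h(a)\,H_q\textbf{F}_q^h(\vec y)=\textbf{F}_q^h(a)\,\psi(\vec y)$, so $\psi$ is $\textbf{F}_q^h$-semilinear and the proposition follows. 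I expect the only point requiring care to be the bookkeeping around the commutation $\sigma_q\circ\textbf{F}_q^h=\textbf{F}_q^h\circ\sigma_q$ and the matrix identity $\textbf{F}_q^h(A_q\vec y)=A_q^{\textbf{F}_q^h}\textbf{F}_q^h(\vec y)$; once these are in hand, the claim reduces to substituting~\eqref{expressionsff3} into the displayed computation, and all remaining verifications are routine.
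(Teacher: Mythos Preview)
Your proof is correct and follows essentially the same approach as the paper's own argument: both establish the commutation $\sigma_q\circ\textbf{F}_q^h=\textbf{F}_q^h\circ\sigma_q$ on $\mathbb{Z}_{q,p}[[z]]$ by direct computation on power series, then use the defining relation~\eqref{expressionsff3} to push the system through, and finally invoke $E_{\mathbb{Z},q,p}\subset\mathbb{Z}_{q,p}[[z]]$ (Proposition~\ref{prop: proprietes de E_q,p3}) to ensure the image lands in $N(A_q)$. Your presentation is slightly more streamlined, chaining the verification into a single display, but the content is the same.
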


\begin{proof}
As $L_q$ has a strong Frobenius structure and $H_q\in\rm{ GL}_{n}(E_{\mathbb{Z},q,p})$ is a Frobenius transition matrix for $L_q$, we have
	\begin{equation}\label{sff3}
	\sigma_q(H_q)A^{\textbf{F}^h_q}_q=A_qH_q.
	\end{equation}
	Let us consider $\psi:N(A_q)\rightarrow N(A_q)$ given by $\psi((f_1,\ldots,f_n)^t)=H_q((\textbf{F}_q^h(f_1),\ldots,\textbf{F}_q^h(f_n))^t$.  We are going to show that $\psi$ is well defined. That is, we prove that if $(f_1,\cdots,f_n)^{t}\in N(A_q)$ then $H_q(\textbf{F}_q^h(f_1),\cdots,\textbf{F}_q^h(f_n))^{t}\in N(A_q)$. Indeed, since $(f_1,\cdots,f_n)^{t}$ is a solution of $\sigma_qy=A_qy$,  we get \begin{equation}\label{systeme3}
	\begin{pmatrix}
	\sigma_qf_{1}\\
	\sigma_qf_2\\
	\vdots\\
	\sigma_qf_{n}\\
	\end{pmatrix}
	=
	A_q
	\begin{pmatrix}
	f_1\\
	f_2\\
	\vdots\\
	f_n\\
	\end{pmatrix}.
	\end{equation}
	Note that the restriction of $\textbf{F}_q:\mathbb{Z}_{q,p}[[z]]\rightarrow\mathbb{Z}_{q,p}[[z]]$ on $E_{\mathbb{Z},q,p}$ is the  Frobenius endomorphism of $E_{\mathbb{Z},q,p}$  constructed in the point 3 of Proposition ~\ref{prop: proprietes de E_q,p3}. Hence, by applying $\textbf{F}_q^h$ to Equality~\eqref{systeme3} we have \begin{equation}\label{systeme13}
	\begin{pmatrix}
	\textbf{F}_q^h(\sigma_qf_{1})\\
	\vdots\\
	\textbf{F}_q^h(\sigma_qf_{n})\\
	\end{pmatrix}
	=
	A^{\textbf{F}^h_q}_q
	\begin{pmatrix}
	\textbf{F}_q^h(f_1)\\
	\vdots\\
	\textbf{F}_q^h(f_n)\\
	\end{pmatrix}.
	\end{equation}
	Now put $f_j=\sum_{i\geq0}a_{i,j}z^i$; then  $\textbf{F}_q^h(f_j)=\sum_{i\geq0}\textbf{F}_q^h(a_{i,j})z^{ip^h}$ and so we have
	\begin{align*}
	\textbf{F}_q^h(\sigma_qf_j)=\textbf{F}_q^h\left(\sum_{i\geq0}a_{i,j}q^iz^i\right)=&\sum_{i\geq0}\textbf{F}_q^h(a_{i,j})\textbf{F}_q^h(q^i)z^{ip^h}\\=&\sum_{i\geq0}\textbf{F}_q^h(a_{i,j})q^{ip^h}z^{ip^h}\\=&\sigma_q(\textbf{F}_q^h(f_j)).
	\end{align*}
	Then, from Equality~\eqref{systeme13} we obtain
	\begin{equation*}
	\begin{pmatrix}
	\sigma_q(\textbf{F}_q^h(f_{1}))\\
	\vdots\\
	\sigma_q(\textbf{F}_q^h(f_n))\\
	\end{pmatrix}
	=
	A_q^{\textbf{F}^h_q}
	\begin{pmatrix}
	\textbf{F}_q^h(f_1)\\
	\vdots\\
	\textbf{F}_q^h(f_n)\\
	\end{pmatrix}.
	\end{equation*}
	Consequently, $(\textbf{F}_q^h(f_1),\ldots,\textbf{F}_q^h(f_n))^t$ is a solution of the system $\sigma_qX=A_q^{\textbf{F}^h_q}X.$
	Then, it follows from~\eqref{sff3} that
	\begin{equation*}
	H_q
	\begin{pmatrix}
	\textbf{F}_q^h(f_1)\\
	\vdots\\
	\textbf{F}_q^h(f_n)\\
	\end{pmatrix}
	\end{equation*}
	is a solution of the system $\sigma_qX=A_qX.$ As $H_q\in\rm{ GL}_n(E_{\mathbb{Z},q,p})$ and,  according to the point 1 of Proposition~\ref{prop: proprietes de E_q,p3},  $E_{\mathbb{Z},q,p}\subset\mathbb{Z}_{q,p}[[z]]$, then this solution belongs to  $N(A_q)$. Therefore, $\psi$ is well defined.  Furthemore, from the definition of $\psi$ we easily see that for all $c\in \mathbb{Z}_{q,p}$ and for all $(f_1,\ldots, f_n)^t\in N(A_q)$, $\psi(c\cdot(f_1,\ldots, f_n)^t)=\textbf{F}_q^h(c)\cdot\psi((f_1,\ldots, f_n)^t)$.
	
Finally, it is clear that for all $(f_1,\ldots, f_n)^t, (g_1,\ldots, g_n)^t\in N(A_q)$, $$\psi((f_1,\ldots, f_n)^t + (g_1,\ldots, g_n)^t)=\psi((f_1,\ldots, f_n)^t)+ \psi( (g_1,\ldots, g_n)^t).$$
\end{proof}
For each $q$-difference operator $L_q\in Frac(\mathbb{Z}_{q,p})[[z]][\sigma_q]$, we set  $$Ker(L_q, Frac(\mathbb{Z}_{q,p})[[z]])=\{y\in Frac(\mathbb{Z}_{q,p})[[z]]\quad\text{such that}\quad L_qy=0\}.$$  

\begin{prop}\label{dimensionker3}
	Let $L_q$ be in $Frac(\mathbb{Z}_{q,p})[[z]][\sigma_q]$ a $q$-difference operator of order $n$.  The set $Ker(L_q, Frac(\mathbb{Z}_{q,p})[[z]])$ is a vector space over the field $Frac(\mathbb{Z}_{q,p})$  and its  dimension is less than or equal to $n$.
\end{prop}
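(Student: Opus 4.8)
The plan is to run the classical Casoratian (``$q$-Wronskian'') argument for linear difference operators. Set $C=Frac(\mathbb{Z}_{q,p})$ and let $\mathbb{K}=C((z))$ be the field of formal Laurent series, which is the fraction field of the domain $Frac(\mathbb{Z}_{q,p})[[z]]$. The automorphism $\sigma_q$ (which fixes $C$ and sends $z\mapsto qz$) extends to an automorphism of $\mathbb{K}$; writing $\sigma_q\big(\sum_m a_mz^m\big)=\sum_m a_mq^mz^m$ and using that $q$ is transcendental, so that $q^m\neq1$ for every nonzero integer $m$, one sees that the field of $\sigma_q$-invariants of $\mathbb{K}$ equals $C$. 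First I would observe that $Ker(L_q,Frac(\mathbb{Z}_{q,p})[[z]])$ is a $C$-vector space: it is closed under addition since $\sigma_q$ and $L_q$ are additive, and closed under multiplication by $C$ since $\sigma_q$ fixes $C$ pointwise, whence $L_q(cy)=cL_q(y)$ for $c\in C$. It then remains to show that any $n+1$ elements of this kernel are linearly dependent over $C$.

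For $f_1,\dots,f_m\in\mathbb{K}$ introduce the Casoratian
\[
W(f_1,\dots,f_m)=\det\big(\sigma_q^{\,k}(f_i)\big)_{0\le k\le m-1,\ 1\le i\le m}.
\]
The key point, which I would prove next, is the standard criterion: $f_1,\dots,f_m$ are linearly dependent over $C$ if and only if $W(f_1,\dots,f_m)=0$. The ``only if'' part is immediate: a nontrivial relation $\sum_i c_if_i=0$ with $c_i\in C$ yields, after applying $\sigma_q^{k}$, the relations $\sum_i c_i\sigma_q^{k}(f_i)=0$ for all $k$, so the columns of the matrix are $\mathbb{K}$-linearly dependent. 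For the ``if'' part I would argue by induction on $m$. If $W(f_1,\dots,f_{m-1})=0$ the inductive hypothesis finishes; otherwise $f_1,\dots,f_{m-1}$ are $C$-independent and the vanishing of $W(f_1,\dots,f_m)$ produces $\lambda_1,\dots,\lambda_m\in\mathbb{K}$, not all zero, with $\sum_{i=1}^m\lambda_i\sigma_q^{k}(f_i)=0$ for $0\le k\le m-1$; one must have $\lambda_m\neq0$ (else the first $m-1$ of these relations would force $W(f_1,\dots,f_{m-1})=0$), so normalize $\lambda_m=1$. Applying $\sigma_q$ to the relations for $0\le k\le m-2$ and subtracting the original relations for $1\le k\le m-1$ gives $\sum_{i<m}(\lambda_i-\sigma_q(\lambda_i))\sigma_q^{k}(f_i)=0$ for $1\le k\le m-1$; the coefficient matrix $\big(\sigma_q^{k}(f_i)\big)_{1\le k\le m-1,\ 1\le i\le m-1}$ has determinant $\sigma_q\big(W(f_1,\dots,f_{m-1})\big)\neq0$, so $\lambda_i=\sigma_q(\lambda_i)$, i.e. $\lambda_i\in C$, and then $f_m=-\sum_{i<m}\lambda_if_i$ is the desired $C$-relation. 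This induction step is the only real content of the proof, and it is precisely where the equality ``field of constants $=C$'' (hence the transcendence of $q$) is used; this fact is classical, cf. van der Put--Singer.

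Finally I would apply the criterion to the operator. Write $L_q=\sum_{j=0}^{n}c_j\sigma_q^{\,j}$ with $c_j\in Frac(\mathbb{Z}_{q,p})[[z]]$ and $c_n\neq0$ (this is the meaning of ``order $n$''). Given $y_1,\dots,y_{n+1}\in Ker(L_q,Frac(\mathbb{Z}_{q,p})[[z]])$, consider the $(n+1)\times(n+1)$ matrix $M=\big(\sigma_q^{\,k}(y_i)\big)_{0\le k\le n,\ 1\le i\le n+1}$, whose determinant is $W(y_1,\dots,y_{n+1})$. Multiplying the bottom row of $M$ by $c_n$ and using $c_n\sigma_q^{\,n}(y_i)=-\sum_{j=0}^{n-1}c_j\sigma_q^{\,j}(y_i)$ exhibits that row as a $Frac(\mathbb{Z}_{q,p})[[z]]$-linear combination of the rows above it; hence $c_n\cdot W(y_1,\dots,y_{n+1})=0$, and since $Frac(\mathbb{Z}_{q,p})[[z]]$ is an integral domain and $c_n\neq0$ we get $W(y_1,\dots,y_{n+1})=0$ in $\mathbb{K}$. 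By the criterion, $y_1,\dots,y_{n+1}$ are linearly dependent over $C=Frac(\mathbb{Z}_{q,p})$, so $\dim_{C}Ker(L_q,Frac(\mathbb{Z}_{q,p})[[z]])\le n$. I note that this route never inverts $c_n$, so there is no need to renormalize $L_q$ to be $\sigma_q$-monic (which would require $c_n(0)\neq0$); the main obstacle is the inductive Casoratian argument of the previous paragraph, everything else being formal.
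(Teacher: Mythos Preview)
Your proof is correct and follows essentially the same route as the paper: both identify the field of $\sigma_q$-constants in $Frac(\mathbb{Z}_{q,p})((z))$ as $Frac(\mathbb{Z}_{q,p})$ (using that $q$ is not a root of unity), observe that any $n+1$ solutions yield a $\mathbb{K}$-linear dependence among the vectors $(\sigma_q^k f_i)_k$ because $L_q f_i=0$ expresses the top row from the lower ones, and then upgrade this to a dependence over the constants by applying $\sigma_q$ and subtracting. The only cosmetic difference is packaging: you isolate the Casoratian criterion as a separate inductive lemma and work with determinants, whereas the paper argues directly with the rank of the matrix and a maximal independent subset of columns; your observation that one need not invert the leading coefficient $c_n$ is a nice touch not made explicit in the paper (which writes $L_q$ in monic form).
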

This proposition is a particular case  from Proposition 1.2.1 of \cite{lucia}  however, for the sake of completeness  we prove it. 
\begin{proof} 
	It is clear that if $y_1$ and $y_2$ belong to $Ker(L_q,Frac(\mathbb{Z}_{q,p})[[z]])$ then $y_1+y_2$ belongs to $Ker(L_q,Frac(\mathbb{Z}_{q,p})[[z]])$.  Moreover, for all $c\in Frac(\mathbb{Z}_{q,p})$, we have $ \sigma_q(c)=c$. Then, it follows that $Ker(L_q,Frac(\mathbb{Z}_{q,p})[[z]])$ is a vector space over the field $Frac(\mathbb{Z}_{q,p})$. Now, consider $C=\{f\in Frac(\mathbb{Z}_{q,p})((z))\text{ : }\sigma_q(f)=f\}$. Note that $C$ is a ring. As $q$ is not  solution of any polynomial over $\mathbb{Z}$, we have $C=Frac(\mathbb{Z}_{q,p})$. For this reason, $Ker(L_q, Frac(\mathbb{Z}_{q,p})[[z]])$ is a vector space over $C$.  We are going to prove that  $dim_CKer(L_q, Frac(\mathbb{Z}_{q,p})[[z]])\leq n$. Let $f_1,\ldots, f_{n+1}$ be in $Ker(L_q, Frac(\mathbb{Z}_{q,p})[[z]])$. We write $$L_q=\sigma^n_q+a_1(z)\sigma^{n-1}_q+\cdots+a_{n-1}(z)\sigma_q+a_n(z)\in Frac(\mathbb{Z}_{q,p})[[z]][\sigma_q].$$
	Then, for every $i\in\{1,\ldots,n+1\}$, we have $$\sigma^{n}_q(f_i)+a_1(z)(\sigma^{n-1}_qf_i)+\cdots+a_{n-1}(z)(\sigma_qf)+a_n(z)f_i=0.$$
	
	Therefore, the vectors \[y_1=\begin{pmatrix}
	f_1\\
	\sigma_qf_1\\
	\vdots\\
	\sigma^n_qf_{1}
	\end{pmatrix},\, y_2=\begin{pmatrix}
	f_2\\
	\sigma_qf_2\\
	\vdots\\
	\sigma^n_qf_2
	\end{pmatrix},\ldots,\,y_{n+1}=\begin{pmatrix}
	f_{n+1}\\
	\sigma_qf_{n+1}\\
	\vdots\\
	\sigma^n_qf_{n+1}
	\end{pmatrix}\] 
	are linearly dependent over $Frac(\mathbb{Z}_{q,p})((z))$ since $Frac(\mathbb{Z}_{q,p})[[z]]\subset Frac(\mathbb{Z}_{q,p})((z))$.   Let $r$ be the rang of the matrix whose columns are the vectors $y_1,\ldots, y_{n+1}$. Then, we have $r\leq n$. So that, there are $v_1,\ldots,v_r\in\{y_1,\ldots,y_{n+1}\}$ linearly independent over $Frac(\mathbb{Z}_{q,p})((z))$ such that $y_1,\ldots, y_{n+1}$ belong to the vector space generated  by $v_1,\ldots, v_r$. Without loss of generality we can assume that $v_1=y_1,\ldots, y_r=v_r$. Then, for each $j\in\{r+1,\ldots,n+1\}$, there are $c_{1,j},\ldots, c_{r,j}\in Frac(\mathbb{Z}_{q,p})((z))$ such that $y_{j}=c_{1,j}y_1+\cdots+c_{r,j}y_{r}$. Consequently, for all $j\in\{r+1,\ldots,n+1\}$, we have
	\begin{equation}\label{ma}
	\begin{pmatrix}
	f_1 & \cdots& f_r\\
	\sigma_qf_1 & \cdots& \sigma_qf_r\\
	\vdots & \cdots & \vdots\\
	\sigma^n_qf_1 & \cdots & \sigma^n_qf_r
	\end{pmatrix}\begin{pmatrix}
	c_{1,j}\\
	c_{2,j}\\
	\vdots\\
	c_{r,j}
	\end{pmatrix}=\begin{pmatrix}
	f_{j}\\
	\sigma_qf_{j}\\
	\vdots\\
	\sigma^n_qf_{j}
	\end{pmatrix}.
	\end{equation}
	We are going to show that, for all $j\in\{r+1,\ldots,n+1\}$, $c_{1,j},\ldots, c_{r,j}$ belong to $C$. Indeed, by applying $\sigma_q$ to \eqref{ma} we obtain \begin{equation}\label{sigma}\begin{pmatrix}
	\sigma_qf_1 & \cdots& \sigma_qf_r\\
	\sigma^2_qf_1 & \cdots& \sigma^2_qf_r\\
	\vdots & \cdots & \vdots\\
	\sigma^{n+1}_qf_1 & \cdots & \sigma^{n+1}_qf_r
	\end{pmatrix}\begin{pmatrix}
	\sigma_qc_{1,j}\\
	\sigma_qc_{2,j}\\
	\vdots\\
	\sigma_qc_{r,j}
	\end{pmatrix}=\begin{pmatrix}
	\sigma_qf_{j}\\
	\sigma^2_qf_{j}\\
	\vdots\\
	\sigma^{n+1}_qf_{j}
	\end{pmatrix}.\end{equation}
	It follows from \eqref{ma} and \eqref{sigma} that \[\begin{pmatrix}
	\sigma_qf_1 & \cdots& \sigma_qf_r\\
	\sigma^2_qf_1 & \cdots& \sigma^2_qf_r\\
	\vdots & \cdots & \vdots\\
	\sigma^{n}_qf_1 & \cdots & \sigma^{n}_qf_r
	\end{pmatrix}\begin{pmatrix}
	\sigma_qc_{1,j}-c_{1,j}\\
	\sigma_qc_{2,j}-c_{2,j}\\
	\vdots\\
	\sigma_qc_{r,j}-c_{r,j}
	\end{pmatrix}=\begin{pmatrix}
	0\\
	0\\
	\vdots\\
	0
	\end{pmatrix}.\] 
	But the rang of the matrix 
	\[\begin{pmatrix}
	\sigma_qf_1 & \cdots& \sigma_qf_r\\
	\sigma^2_qf_1 & \cdots& \sigma^2_qf_r\\
	\vdots & \cdots & \vdots\\
	\sigma^{n}_qf_1 & \cdots & \sigma^{n}_qf_r
	\end{pmatrix}\]
 is $r$ since the vectors $v_1,\ldots,v_r$ are linearly independent over $Frac(\mathbb{Z}_{q,p})((z))$. Therefore, for all $j\in\{r+1,\ldots,n+1\}$ and for all $i\in\{1,\ldots,r\}$, $\sigma_qc_{i,j}=c_{i,j}$.  Then $c_{i,j}$ belongs to $C$ for $j\in\{r+1,\ldots,n+1\}$ and $i\in\{1,\ldots,r\}$. Thus, the vectors $y_1,\ldots, y_{n+1}$ are linearly dependent over $C$ and for this reason, $f_1,\ldots, f_{n+1}$ are linearly dependent over $C$.  Consequently,  $dim_{Frac(\mathbb{Z}_{q,p})}Ker(L_q, Frac(\mathbb{Z}_{q,p})[[z]])\leq n$ since $C=Frac(\mathbb{Z}_{q,p})$
\end{proof}

We can now prove Theorem~\ref{alg3}.

\begin{proof}[of Theorem~\ref{alg3}]
According to Proposition~\ref{dimensionker3}, the set $Ker(L_q,Frac(\mathbb{Z}_{q,p})[[z]])$ is a vector space over $Frac(\mathbb{Z}_{q,p})$ and $dim_{Frac(\mathbb{Z}_{q,p})}Ker(L_q, Frac(\mathbb{Z}_{q,p})[[z]])\leq n$.  Let $A_q$ be the companion matrix of $L_q$ and we set $$\widehat{N(A_q)}=\{\vec{y}\in Frac(\mathbb{Z}_{q,p})[[z]]^n\quad\text{such that}\quad\sigma_q\vec{y}=A_q\vec{y} \}.$$ 

The set $\widehat{N(A_q)}$ is naturally  a $Frac(\mathbb{Z}_{q,p})$-vector space. Recall that $N(A_q)$ is the set of solutions of the systme $\sigma_qX=A_qX$ that belong to $\mathbb{Z}_{q,p}[[z]]^n$. It is clear that $N(A_q)\subset\widehat{N(A_q)}$. 

The sets $Ker(L_q,Frac(\mathbb{Z}_{q,p})[[z]])$ and $\widehat{N(A_q)}$ are isomorphic as $Frac(\mathbb{Z}_{q,p})$-vector spaces since the homomorphism $\widehat{N(A_q)}\rightarrow Ker(L_q,Frac(\mathbb{Z}_{q,p})[[z]])$ given by $(f_1,\ldots, f_n)^t\mapsto f_1$ is an isomorphism. Thus, $dim_{Frac(\mathbb{Z}_{q,p})}\widehat{N(A_q)}\leq n$.  

Let $H_q$ be a Frobenius transition matrix for the operator $L_q$. By Proposition~\ref{actionfrob3}, the application $\psi:N(A_q)\rightarrow N(A_q)$ given by $\psi((f_1,\ldots, f_n)^t)=H_q((\textbf{F}_q^h(f_1)),\ldots,\textbf{F}_q^h(f_n))^t$ is a $ \textbf{F}_q^h$-semilinear endomorphism. Now put $\textbf{f}=(f,\sigma_q(f),\ldots,\sigma^{n-1}_qf)^t$. By hypothesis $f$ is a solution of $L_q$ and $f\in\mathbb{Z}_{q,p}[[z]]$. Then $\textbf{f}\in N(A_q)$ and therefore, for all integers $i>0$, $\psi^i(\textbf{f})\in N(A_q)$. Since $N(A_q)\subset\widehat{N(A_q)}$ and $dim_{Frac(\mathbb{Z}_{q,p})}\widehat{N(A_q)}\leq n$, there are $r_1,\ldots, r_n\in Frac(\mathbb{Z}_{q,p})$ not all zero, such that $$\psi^n(\textbf{f})+r_1\psi^{n-1}(\textbf{f})+\cdots+r_{n-1}\psi(\textbf{f})+r_n\textbf{f}=0.$$
Then, there exists $s_0\in\mathbb{Z}_{q,p}$ non-zero and there are $s_1,\ldots, s_n\in\mathbb{Z}_{q,p}$ not all zero, such that \begin{equation}\label{combf}
s_0\psi^n(\textbf{f})+s_1\psi^{n-1}(\textbf{f})+\cdots+s_{n-1}\psi(\textbf{f})+s_n\textbf{f}=0.
\end{equation}
Let $Z$ be the $Frac(E_{\mathbb{Z},q,p})$-vector space generated by the elements of the following set $\{\textbf{F}_q^{jh}(\sigma^i_q(f)):j\in\mathbb{Z}_{\geq0}, i\in\{0,1,\ldots,n-1\}\}$. It follows from \eqref{combf} that $Z$, as $Frac(E_{\mathbb{Z},q,p})$-vector space, is generated by $\{\textbf{F}_q^{rh}(\sigma^i_q(f)): r,i\in\{0,1,\ldots,n-1\}\}$ and therefore, we have $dim_{Frac(E_{\mathbb{Z},q,p})}Z\leq n^2$.\\ 
Since $f, \textbf{F}_q^h(f),\ldots, \textbf{F}_q^{n^2h}(f)\in Z$, there exist $b_0(q,z),b_1(q,z),\ldots, b_{n^2}(q,z)\in E_{\mathbb{Z},q,p}$ not all zero, such that $$b_0(q,z)f+b_1(q,z)\textbf{F}_q^h(f)+\cdots+b_{n^2}(q,z)\textbf{F}_q^{n^2h}(f)=0.$$
Let $i_0$ be in $\{0,\ldots, n^2\}$ such that $|b_{i_0}(q,z)|_{\mathcal{G}}=\max\{|b_0(q,z)|_{\mathcal{G}},\ldots,|b_{n^2}(q,z)|_{\mathcal{G}}\}$. As $b_{i_0}(q,z)\in E_{\mathbb{Z},q,p}$ is not zero and the norm is ultrametric, then there is $P(z)/Q(z)\in\mathbb{Z}_{q,p}[z]_S$ such that $|b_{i_0}(q,z)|_{\mathcal{G}}=|P(z)/Q(z)|_{\mathcal{G}}$.  But $|P(z)/Q(z)|_{\mathcal{G}}=|P(z)|_{\mathcal{G}}$ because $Q(z)\in S$.  Let $c$ be in $\mathbb{Z}_{q,p}$ the coefficient of $P(z)$ such that $|P(z)|_{\mathcal{G}}=|c|_{\mathfrak{m}}$. Therefore, $|c|_{\mathfrak{m}}=\max\{|b_0(q,z)|_{\mathcal{G}},\ldots,|b_{n^2}(q,z)|_{\mathcal{G}}\}$. Note that $c$ is not zero since $b_0(q,z),\ldots, b_{n^2}(q,z)$ are not all zero.\\
For each $i\in\{0,1,\ldots,n^2\}$,  we set $d_i(q,z)=\frac{b_i(q,z)}{c}$. So, $1=\max\{|d_0(q,z)|_{\mathcal{G}},\ldots, |d_{n^2}(q,z)|_{\mathcal{G}}\}$ and
\begin{equation}\label{falg}
d_0(q,z)f+d_1(q,z)\textbf{F}_q^h(f)+\cdots+d_{n^2}(q,z)\textbf{F}_q^{n^2h}(f)=0.
\end{equation}
From point 1 of Proposition~\ref{prop: proprietes de E_q,p3},  we know that $E_{\mathbb{Z},q,p}\subset\mathbb{Z}_{q,p}[[z]]$. Hence, $b_0(q,z),b_1(q,z),\ldots$,  $b_{n^2}(q,z)\in\mathbb{Z}_{q,p}[[z]]$ and $d_0(q,z),d_1(z),\ldots,d_{n^2}(q,z)\in Frac(\mathbb{Z}_{q,p})[[z]]$.  

As, for all $i\in\{0,1,\ldots,n^2\}$, $|d_i(q,z)|_{\mathcal{G}}\leq1$, then for all $i\in\{0,1,\ldots,n^2\},$ $d_i(q,z)\in\vartheta_{Frac(\mathbb{Z}_{q,p})}[[z]]$. Thus, we get from \eqref{falg} that $$d_0(q,z)f+d_1(q,z)\textbf{F}_q^h(f)+\cdots+d_{n^2}(q,z)\textbf{F}_q^{n^2h}(f)\equiv0\bmod\phi_p(q)\vartheta_{Frac(\mathbb{Z}_{q,p})}[[z]].$$ 
 Finally, the polynomial $d_0(q,z)Y_0+\cdots+d_{n^2}(q,z)Y_{n^2}\bmod\phi_p(q)\vartheta_{Frac(\mathbb{Z}_{q,p})}$ is not zero because $1=\max\{|d_1(q,z)|_{\mathcal{G}},\ldots, |d_{n^2}(q,z)|_{\mathcal{G}}\}$.
\end{proof}

\begin{rema}\label{frac3}
	Let $k$ be the residue ring of $\vartheta_{Frac(\mathbb{Z}_{q,p})}$.  Note that $k$ is a field because $Frac(\mathbb{Z}_{q,p})$ is a field. Let $\mathfrak{n}$ be the maximal ideal of $\vartheta_{Frac(\mathbb{Z}_{q,p})}$. We have just shown that under the hypotheses of Theorem~\ref{alg3}, there are $d_0(z),\ldots,d_{n^2}(z)\in\vartheta_{Frac(\mathbb{Z}_{q,p})}[[z]]$ and there is a non-zero constant $c\in\mathbb{Z}_{q,p}$  such that  the polynomial $d_0(q,z)Y_0+d_1(z)Y_1+\cdots+d_{n^2}(q,z)Y_{n^2}\mod[p]_q\vartheta_{Frac(\mathbb{Z}_{q,p})}$ is not zero, $cd_0(q,z),\ldots, cd_{n^2}(q,z)\in E_{\mathbb{Z},q,p}$, and
	$$
	d_0(q,z)f(z)+d_1(q,z)\textbf{F}^h_q(f)+\cdots+d_{n^2}(q,z)\textbf{F}^{n^2h}_q(f)\equiv0\bmod\phi_p(q)\vartheta_{Frac(\mathbb{Z}_{q,p})}[[z]].$$  For every $i\in\{0,1,\ldots,n^2\}$,  let $d_i|_{\mathfrak{n}}(q,z)$ be the power series over $k$ obtained after reducing each coefficient of $d_i(q,z)$ modulo the ideal $\mathfrak{n}$.  In this remark we are going to show that for all $i\in\{0,1,\ldots, n^2\}$,  $d_i|_{\mathfrak{n}}(q,z)\in k(z)$. Indeed, as $b_i(q,z)\in E_{\mathbb{Z},q,p}$ then $b_i(q,z)=\lim_{n\to\infty}x_n(z)$, where $x_n(z)\in\mathbb{Z}_{q,p}[z]_S$ for all $n\geq0$. So, $d_i(q,z)=\lim_{n\to\infty}\frac{x_n(z)}{c}$ and therefore, there exists $N$ such that $|d_i(q,z)-\frac{x_N(z)}{c}|_{\mathcal{G}}<1$.  Since the norm is ultrametric and $|d_i(q,z)|_{\mathcal{G}}\leq1$, we have $\left|\frac{x_N(z)}{c}\right|_{\mathcal{G}}\leq1$.  We write $$x_N(z)=\frac{\sum_{i=0}^la_{N,i}z^i}{\sum_{j=0}^tb_{N,j}z^j},$$ where $\sum_{i=0}^la_{N,i}z^i\in\mathbb{Z}_{q,p}[z]$ and $\sum_{j=0}^tb_{N,j}z^j\in S$. Let $i_0$ be in $\{0,\ldots, l\}$ such that $|a_{N,i_0}|_{\mathfrak{m}}=\max\{|a_{N,0}|_\mathfrak{m},\ldots,|a_{N,l}|_\mathfrak{m}\}$. 
	
	As $\sum_{j=0}^tb_{N,j}z^j\in S$ then  $1=|b_{N,0}|_{\mathfrak{m}}=\max\{|b_{N,0}|_\mathfrak{m},\ldots,|b_{N,t}|_\mathfrak{m}\}$ and therefore, $$x_N(z)=\frac{a_{N,i_0}}{b_{N,0}}\frac{P(z)}{Q(z)},\quad\textup{where}\quad P(z)=\sum_{i=0}^l\frac{a_{N,i}}{a_{N,i_0}}z^i,\,\ Q(z)=\sum_{j=0}^t\frac{b_{N,j}}{b_{N,0}}z^j.$$
	Note that $|P(z)|_{\mathcal{G}}=1$ and $|Q(z)|_{\mathcal{G}}=1$. Thus, $P(z),Q(z)\in\vartheta_{Frac(\mathbb{Z}_{q,p})}[z]$. 
	Since $|b_{N,0}|_{\mathfrak{m}}=1$, $\frac{1}{b_{N,0}}\in\mathbb{Z}_{q,p}$ because $\mathbb{Z}_{q,p}$ is a complete local ring and for this reason, $\frac{a_{N,i_0}}{b_{N,0}}\in~\mathbb{Z}_{q,p}$. 
	As $\frac{X_{N}(z)}{c}=\frac{a_{N,i_0}}{cb_{N,0}}\frac{P(z)}{Q(z)}$ and $\left|P(z)/Q(z)\right|_{\mathcal{G}}=1$ and, moreover $\left|x_N(z)/c\right|_{\mathcal{G}}\leq1$, then $\left|a_{N,i_0}/cb_{N,0}\right|_{\mathfrak{m}}\leq1$. Whence, $a_{N,i_0}/cb_{N,0}\in~\vartheta_{Frac(\mathbb{Z}_{q,p})}$.  Therefore, we can reduce each coefficient of $\frac{X_N(z)}{c}$ modulo the maximal ideal $\mathfrak{n}$.  We let $\frac{X_N(z)}{c}\big|_{\mathfrak{n}}$ denote the power series obtained after reducing each coefficient of $\frac{X_N(z)}{c}$  modulo the maximal ideal $\mathfrak{n}$; then $$\frac{X_N(z)}{c}\bigg|_{\mathfrak{n}}=\left(\frac{a_{N,i_0}}{cb_{N,0}}\mod\mathfrak{n}\right)\frac{\sum_{i=0}^l(\frac{a_{N,i}}{a_{N,i_0}}\mod\mathfrak{n})z^i}{\sum_{j=0}^t(\frac{b_{N,j}}{b_{N,0}}\mod\mathfrak{n})z^j}\in k(z).$$ Since  $|d_i(q,z)-\frac{x_N(z)}{c}|_{\mathcal{G}}<1$,  $d_{i\mid\mathfrak{n}}(q,z)=\frac{X_N(z)}{c}\big|_{\mathfrak{n}}$ and so, $d_i|_{\mathfrak{n}}(q,z)\in k(z)$. 
	
	\smallskip
	
	Finally, $d_0|_{\mathfrak{n}}(q,z),\ldots,d_{n^2}|_{\mathfrak{n}}(q,z)$ are not all zero because $1=\max\{|d_i(q,z)|_{\mathcal{G}}\}_{0\leq i\leq n^2}$.

\end{rema}

\subsection{Recovering congruences modulo $p$ from congruences modulo $\phi_p(q)$}\label{sub_recuperando}
In this section we are going to explain how to use Theorem~\ref{alg3} for recovering congruences modulo $p$ from congruences modulo cyclotomic polynomial $\phi_p(q)$.\\
Let $f(q,z)$ be in $\mathbb{Z}[q][[z]]$. Then, $\textbf{F}_q(f(q,z))=f(q^p,z^p)$ and $\textbf{ev}(f(q,z))=f(1,z)$. Suppose that $f(q,z)\in\mathbb{Z}[q][[z]]$ is a solution of a $q$-difference operator $L_q$ of order $n$ having a strong Frobenius structure with period $h$.  Therefore, by Theorem~\ref{alg3}, there are $d_0(q,z),\ldots,d_{n^2}(q,z)\in\vartheta_{Frac(\mathbb{Z}_{q,p})}[[z]]$ and there is a non-zero constant $c\in\mathbb{Z}_{q,p}$ such that the polynomial  $d_0(q,z)Y_0+d_1(z)Y_1+\cdots+d_{n^2}(q,z)Y_{n^2}\mod\phi_p(q)\vartheta_{Frac(\mathbb{Z}_{q,p})}$ is not zero, $cd_0(q,z),\ldots, cd_{n^2}(q,z)\in E_{\mathbb{Z},q,p}$, and
\begin{equation}\label{eq: querida}
\sum_{i=0}^{n^2}d_i(q,z)f(q^{p^{ih}},z^{p^{ih}})\equiv0\bmod\phi_p(q)\vartheta_{Frac(\mathbb{Z}_{q,p})}[[z]].
\end{equation}
Let $\mathfrak{n}$ be the maximal ideal of $\vartheta_{Frac(\mathbb{Z}_{q,p})}$.  Note that $\mathfrak{m}\subset\mathfrak{n}$ because $\mathfrak{m}\subset\mathbb{Z}[q]\subset\vartheta_{Frac(\mathbb{Z}_{q,p})}$  and each element of $\mathfrak{m}$ has a norm less than 1. Hence,  $\phi_p(q)\in\mathfrak{n}$, and for all $i\geq0$, $q^{p^{ih}}\equiv1\bmod\mathfrak{n}$. Then, from Equality~\eqref{eq: querida}, we get that
\begin{equation}\label{eq: red mod n}
\sum_{i=0}^{n^2}d_{i}(1,z)f(1,z^{p^{ih}})\equiv0\bmod\mathfrak{n}\vartheta_{Frac(\mathbb{Z}_{q,p})}[[z]].
\end{equation}
According to Remark~\ref{frac3}, the reduction of $d_{0}(q,z),\ldots,d_{n^2}(q,z)$ modulo the maximal ideal $\mathfrak{n}$ are rational functions not all zero over the field  $\vartheta_{Frac(\mathbb{Z}_{q,p})}/\mathfrak{n}$. Note that for $i\geq0$, $f(1,z^{p^{ih}})\mod\mathfrak{n}\in\mathbb{F}_p[[z]]$  because the residue field of $\mathbb{Z}[q]$ is $\mathbb{F}_p$ and $f(1,z^{p^ih})\in\mathbb{Z}[q][[z]]$. Finally,  as $\vartheta_{Frac(\mathbb{Z}_{q,p})}/\mathfrak{n}$ is an extension of $\mathbb{F}_p$, then it follows  from Lemme 4.2 of \cite{vmsff} and from Equation~\eqref{eq: red mod n}  that there exist $a_0(z),\ldots,a_{n^2}(z)\in\mathbb{Z}[z]$  such that their reduction modulo $p$ are not all zero and  $$\sum_{i=0}^{n^2}a_i(z)f(1,z^{p^{ih}})\equiv0\bmod p\mathbb{Z}[[z]].$$
\section{ $q$-hypergeometric operators of order 1}\label{sec_qhyp1}
 Let us consider the following $q$-hypergeometric operator of order 1 $$\mathcal{H}_{q,\alpha}:=\sigma_q-\frac{1-z}{1-q^{\alpha}z}$$
 where $\alpha\in\mathbb{Q}\cap(0,1]$. 
 
 The aim of this section  is to show that if $\alpha=a/b\in\mathbb{Q}\cap(0,1]$ and $p$ is congruent to 1 modulo $b$, then the $q$-difference operator $\mathcal{H}_{q,\alpha}$ has a strong Frobenius structure with period 1. We start by proving that if $\alpha\in\mathbb{Z}_p$ then $\mathcal{H}_{q,\alpha}$ is a $q$-difference operator with coefficients in $E_{\mathbb{Z},q,p}$. For this purpose  we need somme preliminary results.

\begin{lemm}\label{norme}
	For every  positive integer $j$, $$1-q^{p^j}=(1-q)\phi_p(q)\cdots\phi_p(q^{p^{j-1}}).$$ Moreover, for all positive integers $j$, $|1-q^{p^j}|_{\mathfrak{m}}\leq1/p^{j+1}$ and $\left|\frac{1-q^{p^j}}{1-q}\right|_{\mathfrak{m}}\leq1/p^j$.
\end{lemm}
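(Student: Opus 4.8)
The plan is to first establish the telescoping factorisation as a polynomial identity in $\mathbb{Z}[q]$, and then read off the norm bounds from it via multiplicativity of the $\mathfrak{m}$-adic absolute value.

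\medskip

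\noindent\emph{Step 1: the factorisation.} Recall $\phi_p(q)=1+q+\cdots+q^{p-1}=\dfrac{q^p-1}{q-1}$, so that for any $y$ we have $1-y^p=(1-y)\phi_p(y)$. I would argue by induction on $j\geq1$. For $j=1$ this is just $1-q^p=(1-q)\phi_p(q)$, which is the displayed product for $j=1$ since $q^{p^{j-1}}=q$. For the inductive step, writing $y=q^{p^{j}}$ gives
\begin{align*}
1-q^{p^{j+1}}&=1-\bigl(q^{p^{j}}\bigr)^p=\bigl(1-q^{p^{j}}\bigr)\phi_p\bigl(q^{p^{j}}\bigr)\\
&=(1-q)\phi_p(q)\phi_p(q^p)\cdots\phi_p(q^{p^{j-1}})\,\phi_p(q^{p^{j}}),
\end{align*}
using the induction hypothesis on $1-q^{p^{j}}$. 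This is exactly the asserted formula for $j+1$. In particular, dividing through by $1-q$ (which is legitimate, the quotient being the genuine element $\phi_p(q)\phi_p(q^p)\cdots\phi_p(q^{p^{j-1}})$ of $\mathbb{Z}[q]$), we get $\dfrac{1-q^{p^{j}}}{1-q}=\prod_{i=0}^{j-1}\phi_p(q^{p^i})$.

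\medskip

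\noindent\emph{Step 2: norms of the factors.} I would record that $|1-q|_{\mathfrak{m}}=1/p$, since $1-q\in\mathfrak{m}\setminus\mathfrak{m}^2$. Next, for every integer $i\geq0$ I claim $|\phi_p(q^{p^i})|_{\mathfrak{m}}\leq 1/p$: indeed $\phi_p(q^{p^i})=1+q^{p^i}+q^{2p^i}+\cdots+q^{(p-1)p^i}$ has $\textbf{ev}(\phi_p(q^{p^i}))=\phi_p(1)=p$, so $\pi_p\circ\textbf{ev}$ annihilates $\phi_p(q^{p^i})$, whence $\phi_p(q^{p^i})\in\mathfrak{m}$ by Remark~\ref{noyau3}, i.e. $|\phi_p(q^{p^i})|_{\mathfrak{m}}\leq 1/p$. (Equivalently one may note $\phi_p(q^{p^i})=\textbf{F}_q^i(\phi_p(q))$ and invoke Proposition~\ref{frob3}.)

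\medskip

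\noindent\emph{Step 3: conclusion.} Applying multiplicativity of $|\cdot|_{\mathfrak{m}}$ (Proposition~\ref{prop_m_adique3}(iii)) to the identity of Step 1 gives
$$|1-q^{p^{j}}|_{\mathfrak{m}}=|1-q|_{\mathfrak{m}}\prod_{i=0}^{j-1}|\phi_p(q^{p^i})|_{\mathfrak{m}}\leq\frac{1}{p}\cdot\frac{1}{p^{j}}=\frac{1}{p^{j+1}},$$
and likewise $\left|\dfrac{1-q^{p^{j}}}{1-q}\right|_{\mathfrak{m}}=\prod_{i=0}^{j-1}|\phi_p(q^{p^i})|_{\mathfrak{m}}\leq \dfrac{1}{p^{j}}$, since the product has $j$ factors each of norm at most $1/p$.

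\medskip

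There is essentially no hard step here; the only point requiring a little care is to keep all manipulations inside $\mathbb{Z}[q]$, where Proposition~\ref{prop_m_adique3} applies, and to count the cyclotomic factors correctly (there are $j$ of them in the product $\phi_p(q)\cdots\phi_p(q^{p^{j-1}})$).
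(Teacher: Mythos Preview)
Your proof is correct and follows essentially the same approach as the paper's: both establish the telescoping factorisation by induction on $j$ using $1-y^p=(1-y)\phi_p(y)$, then observe that each factor $\phi_p(q^{p^i})$ lies in $\mathfrak{m}$ (via $\phi_p(1)=p$ and Remark~\ref{noyau3}) and count factors. The only cosmetic difference is that you phrase the second step through multiplicativity of $|\cdot|_{\mathfrak{m}}$ while the paper writes it as $1-q^{p^j}\in\mathfrak{m}^{j+1}$, which amounts to the same thing.
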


\begin{proof}
	Firstly, we are going to prove that for every positive integer  $j$, $$1-q^{p^j}=(1-q)\phi_p(q)\cdots\phi_p(q^{p^{j-1}}).$$ To this end we proceed by induction on $j$.  Remember that $\phi_p(q)=1+q+\cdots+q^{p-1}$. 
	
\smallskip
	
It is clear that, $1-q^p=(1-q)\phi_p(q)$. Whence our claim is true for $j=1$.
	
	Suppose that for a positive integer $j$, we have $1-q^{p^j}=(1-q)\phi_p(q)\cdots\phi_p(q^{p^{j-1}})$. Note that $$1-q^{p^{j+1}}=(1-q^{p^j})(1+q^{p^j}+\cdots+q^{p^{j+1}-p^j})=(1-q^{p^j})\phi_p(q^{p^j}).$$
	Then, the induction hypothesis  implies that $$1-q^{p^{j+1}}=(1-q)\phi_p(q)\cdots\phi_p(q^{p^{j-1}})\phi_p(q^{p^{j}}).$$
	Therefore, by induction we conclude that for every positive integer $j$, \begin{equation}\label{producto}
	1-q^{p^{j}}=(1-q)\phi_p(q)\cdots\phi_p(q^{p^{j-1}}).
	\end{equation}
	Secondly, we prove that for every positive integer $j$, $|1-q^{p^j}|_{\mathfrak{m}}\leq1/p^{j+1}$.  As $\phi_p(1)=p$ then  Remark~\ref{noyau3} implies that for every integer $j$,  $\phi_p(q^{p^j})\in\mathfrak{m}$. Besides, recall that $1-q\in\mathfrak{m}$. Thus, from Equality~\eqref{producto} follows that, for every positive intger $j$, $1-q^{p^j}\in\mathfrak{m}^{j+1}$.  For this reason, for every positive integer $j$, $|1-q^{p^j}|_{\mathfrak{m}}\leq1/p^{j+1}$. \\
	Finally, from~\eqref{producto} we see that for every positive integer $j$, $$\frac{1-q^{p^j}}{1-q}=\phi_p(q)\cdots\phi_p(q^{p^{j-1}}).$$   So, for every positive integer $j$, $\frac{1-q^{p^j}}{1-q}\in\mathfrak{m}^j$. Therefore, $\left|\frac{1-q^{p^j}}{1-q}\right|_{\mathfrak{m}}\leq1/p^j$.
\end{proof}

\begin{lemm}\label{beta}
	Let $p$ be a prime number, let $\beta$ be in $\mathbb{Z}_{p}$ and let $t$ be a positive integer such that $p$ does not divide  $t$.
	\begin{enumerate}
		\item  The following elements belong to $\mathbb{Z}_{q,p}$: $q^{\beta}$, $1-q^{\beta}$ and $\frac{1-q^{\beta}}{1-q^t}$. 
		
		\item  For every non-negative integer $h$,  $\textbf{F}_q^h(q^{\beta})=q^{\beta p^h}$ and $\textbf{F}_q^h\left(\frac{1-q^{\beta}}{1-q^t}\right)=\frac{1-q^{\beta p^{h}}}{1-q^{tp^{h}}}$. In particular, $q^{\beta p^h}$ and $\frac{1-q^{\beta p^{h}}}{1-q^{tp^{h}}}$ belong to $\mathbb{Z}_{q,p}$.
		
		\item For every non-negative integer $h$, $\textbf{ev}(q^{\beta p^h})=1$ and $\textbf{ev}\left(\frac{1-q^{\beta p^{h}}}{1-q^{tp^{h}}}\right)=\frac{\beta}{t}$.
	\end{enumerate} 
\end{lemm}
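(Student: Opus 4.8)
The idea is to reduce every assertion to the case where $\beta$ is a positive integer and then pass to a limit, the quantitative input being Lemma~\ref{norme}. Fix $\beta\in\mathbb{Z}_p$ and choose positive integers $\beta_n\to\beta$ in $\mathbb{Z}_p$ (possible since $\mathbb{Z}_{>0}$ is dense in $\mathbb{Z}_p$). If $m\ge m'$ are positive integers with $m\equiv m'\bmod p^N$, write $m-m'=p^N r$ with $r\in\mathbb{Z}_{\ge0}$; then $q^m-q^{m'}=q^{m'}(q^{p^N}-1)(1+q^{p^N}+\cdots+q^{(r-1)p^N})$, and since $|q|_{\mathfrak m}=1$ (because $q\notin\mathfrak m$), Lemma~\ref{norme} gives $|q^m-q^{m'}|_{\mathfrak m}\le|1-q^{p^N}|_{\mathfrak m}\le p^{-(N+1)}$. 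Writing $[m]_q=1+q+\cdots+q^{m-1}=\tfrac{1-q^m}{1-q}$, the same computation gives $[m]_q-[m']_q=q^{m'}\tfrac{1-q^{p^N r}}{1-q}$, hence $|[m]_q-[m']_q|_{\mathfrak m}\le\bigl|\tfrac{1-q^{p^N}}{1-q}\bigr|_{\mathfrak m}\le p^{-N}$. Thus $\{q^{\beta_n}\}$ and $\{[\beta_n]_q\}$ are Cauchy in $\mathbb{Z}_{q,p}$, and (interleaving two sequences tending to $\beta$) their limits are independent of the chosen sequence; I set $q^\beta:=\lim_n q^{\beta_n}$ and $\ell_\beta:=\lim_n[\beta_n]_q$, both in $\mathbb{Z}_{q,p}$ by completeness, and passing to the limit in $1-q^{\beta_n}=(1-q)[\beta_n]_q$ gives $1-q^\beta=(1-q)\ell_\beta$. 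This already proves most of (1): $q^\beta,1-q^\beta\in\mathbb{Z}_{q,p}$, and since $[t]_q(1)=t$ is prime to $p$, Remark~\ref{noyau3} together with Remark~\ref{rq: bn def3} gives $[t]_q\notin\mathfrak m$, so $[t]_q$ is a unit of $\mathbb{Z}[q]_{\mathfrak m}\subset\mathbb{Z}_{q,p}$ and $\tfrac{1-q^\beta}{1-q^t}=\tfrac{(1-q)\ell_\beta}{(1-q)[t]_q}=\ell_\beta[t]_q^{-1}\in\mathbb{Z}_{q,p}$.

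\textbf{Part (2).} Iterating $\textbf{F}_q(q)=q^p$ (Proposition~\ref{prop: frob Z_{q,p} 3}) gives $\textbf{F}_q^h(q)=q^{p^h}$, so for positive integers $m$ one has $\textbf{F}_q^h(q^m)=q^{mp^h}$ and $\textbf{F}_q^h([m]_q)=[m]_{q^{p^h}}:=1+q^{p^h}+\cdots+q^{(m-1)p^h}$. Applying the continuous endomorphism $\textbf{F}_q^h$ to $q^\beta=\lim_nq^{\beta_n}$ and noting that $\{\beta_np^h\}$ is a sequence of positive integers tending to $\beta p^h$, we get $\textbf{F}_q^h(q^\beta)=\lim_nq^{\beta_np^h}=q^{\beta p^h}$. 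For the fraction, $\textbf{F}_q^h\bigl(\tfrac{1-q^\beta}{1-q^t}\bigr)=\textbf{F}_q^h(\ell_\beta)\,\textbf{F}_q^h([t]_q)^{-1}=\textbf{F}_q^h(\ell_\beta)\,[t]_{q^{p^h}}^{-1}$. The one delicate point is to identify $\textbf{F}_q^h(\ell_\beta)$: I cannot invoke (1) with exponent $tp^h$, which is no longer prime to $p$. Instead I use the identity $\tfrac{1-q^{mp^h}}{1-q}=[m]_{q^{p^h}}\cdot[p^h]_q$ for positive integers $m$, which upon passing to the limit yields $\ell_{\beta p^h}=\textbf{F}_q^h(\ell_\beta)\,[p^h]_q$ in $Frac(\mathbb{Z}_{q,p})$; combined with $1-q^{tp^h}=(1-q)[t]_{q^{p^h}}[p^h]_q$ this gives
\[
\textbf{F}_q^h\Bigl(\tfrac{1-q^\beta}{1-q^t}\Bigr)=\frac{\ell_{\beta p^h}}{[t]_{q^{p^h}}[p^h]_q}=\frac{(1-q)\ell_{\beta p^h}}{(1-q)[t]_{q^{p^h}}[p^h]_q}=\frac{1-q^{\beta p^h}}{1-q^{tp^h}}.
\]
In particular this element lies in $\mathbb{Z}_{q,p}$, being $\textbf{F}_q^h$ applied to an element of $\mathbb{Z}_{q,p}$, which gives the last sentence of (2).

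\textbf{Part (3).} By Proposition~\ref{prop: frob Z_{q,p} 3} one has $\textbf{ev}\circ\textbf{F}_q^h=\textbf{ev}$, so using (2), $\textbf{ev}(q^{\beta p^h})=\textbf{ev}(q^\beta)$ and $\textbf{ev}\bigl(\tfrac{1-q^{\beta p^h}}{1-q^{tp^h}}\bigr)=\textbf{ev}\bigl(\tfrac{1-q^\beta}{1-q^t}\bigr)$. Since $\textbf{ev}\colon\mathbb{Z}_{q,p}\to\mathbb{Z}_p$ is a continuous ring homomorphism with $\textbf{ev}(q)=1$, we obtain $\textbf{ev}(q^\beta)=\lim_n\textbf{ev}(q)^{\beta_n}=1$ and $\textbf{ev}\bigl(\tfrac{1-q^\beta}{1-q^t}\bigr)=\textbf{ev}(\ell_\beta)\,\textbf{ev}([t]_q)^{-1}=\bigl(\lim_n\beta_n\bigr)\cdot t^{-1}=\beta/t$, the last limit taken in $\mathbb{Z}_p$ (here $\textbf{ev}([\beta_n]_q)=[\beta_n]_1=\beta_n$ and $\textbf{ev}([t]_q)=t$). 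The only genuine obstacle in the whole argument is the bookkeeping in Part~(2): because $\textbf{F}_q^h$ does not commute with division by $1-q$, one must track the non-unit factor $[p^h]_q$ when transporting the ``divided power'' $\ell_\beta$ along $\textbf{F}_q^h$; everything else is a routine limiting argument resting on Lemma~\ref{norme} and the completeness of $\mathbb{Z}_{q,p}$.
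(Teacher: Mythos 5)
Your proof is correct. It does differ from the paper's in its organization, so let me compare. The paper works with the \emph{specific} telescoping series attached to the base-$p$ expansion $\beta=\sum s_np^n$: it writes $q^\beta=q^{\beta_0}+\sum_{j\ge1}q^{\beta_{j-1}}(q^{s_jp^j}-1)$ with $\beta_j$ the partial sums, checks convergence term-by-term via Lemma~\ref{norme}, and then proves (ii) and (iii) by pushing $\textbf{F}_q^h$ and $\textbf{ev}$ through the series term by term, using the analogous series expansion for $\frac{1-q^\beta}{1-q^t}$. You instead approximate $\beta$ by an \emph{arbitrary} sequence of positive integers, show that $m\mapsto q^m$ and $m\mapsto[m]_q$ are Cauchy-continuous (with the same quantitative input from Lemma~\ref{norme}), and define everything by passage to the limit; you then isolate the auxiliary element $\ell_\beta=\lim[\beta_n]_q=\frac{1-q^\beta}{1-q}$ and the factorization $[mp^h]_q=[m]_{q^{p^h}}[p^h]_q$, which makes (ii) a clean algebraic bookkeeping of the $[p^h]_q$ factor rather than a term-by-term series manipulation. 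Your route buys conceptual cleanliness and avoids carrying the base-$p$ digits explicitly through parts (ii) and (iii); the paper's route is more explicitly computational but also more self-contained, since it never needs the ``limits are independent of the chosen sequence'' argument. One small thing worth making explicit if you polish this: your chain $\textbf{F}_q^h(\ell_\beta[t]_q^{-1})=\textbf{F}_q^h(\ell_\beta)\textbf{F}_q^h([t]_q)^{-1}$ uses that $[t]_q$ and $[t]_{q^{p^h}}$ are units of $\mathbb{Z}_{q,p}$, which you do justify via $[t]_q(1)=t\notin p\mathbb{Z}$ and Remark~\ref{rq: bn def3}, but it is better to state it once when you first invert $[t]_q$.
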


\begin{proof}
	As $\beta\in\mathbb{Z}_p$ then $\beta=\sum\limits_{n\geq0}s_np^n$, where $s_n\in\{0,1,\ldots,p-1\}$ for all $n\geq0$. 
	
	\smallskip
	
	(i). For every $j\geq0$,  put $\beta_j=\sum\limits_{n=0}^js_np^n$; then
	\begin{equation}\label{qb}
	q^{\beta}=q^{\beta_0}+q^{\beta_0}(q^{s_1p}-1)+q^{\beta_1}(q^{s_2p^2}-1)+\cdots+q^{\beta_{j-1}}(q^{s_{j}p^{j}}-1)+\cdots.
	\end{equation}
	
	For every integer $j\geq1$, there exists $r_j(q)\in\mathbb{Z}[q]$ such that $$q^{s_{j}p^{j}}-1=r_j(q)(q^{p^j}-1).$$  
	According to Lemma~\ref{norme}, the norm of  $q^{p^j}-1$ is less than or equal to $1/p^{j+1}$. Therefore, we have $|q^{s_{j}p^{j}}-1|_{\mathfrak{m}}\leq1/p^{j+1}$. But $|q^{\beta_{j-1}}(q^{s_{j}p^{j}}-1)|_{\mathfrak{m}}=|q^{s_{j}p^{j}}-1|_{\mathfrak{m}}$ and thus we obtain $|q^{\beta_{j-1}}(q^{s_{j}p^{j}}-1)|_{\mathfrak{m}}\leq1/p^{j+1}$.  Since the $\mathfrak{m}$-adic norm is ultrametric and we have $\lim\limits_{j\rightarrow\infty}\left|q^{\beta_{j-1}}(q^{s_{j}p^{j}}-1)\right|_{ \mathfrak{m}}=0$, it follows from~\eqref{qb} that $q^{\beta}$ is an element of $\mathbb{Z}_{q,p}$.  Since $q^{\beta}\in\mathbb{Z}_{q,p}$, we have $1-q^{\beta}\in\mathbb{Z}_{q,p}$.\\
	 Now, let $t$ be a positive integer such that $p$ does not divide $t$. By Remark~\ref{noyau3}, the polynomial $h_t(q)=1+q+\cdots+q^{t-1}$ does not belong to $\mathfrak{m}$. Then, $\frac{1}{h_t(q)}\in\mathbb{Z}[q]_{\mathfrak{m}}$ and $\left|\frac{1}{h_t(q)}\right|_{\mathfrak{m}}=1$. \\
	 Since $1-q^t=(1-q)h_t(q)$, it follows from~\eqref{qb} that
	\begin{equation}\label{betat}
	\frac{1-q^{\beta}}{1-q^t}=\frac{1}{h_t(q)}\left[\frac{1-q^{\beta_0}}{1-q}-q^{\beta_0}\frac{q^{s_1p}-1}{1-q}-\cdots-q^{\beta_{j-1}}\frac{q^{s_jp^j}-1}{1-q}-\cdots\right].
	\end{equation}
	Since for every integer $n>0$, $(1-q^n)/(1-q)=1+q+\cdots+q^{n-1}\in\mathbb{Z}[q]_{\mathfrak{m}}$, we get that for every integer  $j>0$, $q^{\beta_{j-1}}\frac{q^{s_jp^j}-1}{1-q}\in\mathbb{Z}[q]_{\mathfrak{m}}.$ \\
	Following Lemma~\ref{norme},  we have $\left|\frac{q^{p^j}-1}{1-q}\right|_{\mathfrak{m}}\leq1/p^j$ and as $q^{s_jp^j}-1=r_j(q)(q^{p^j}-1)$, then  $\left|q^{\beta_{j-1}}\frac{q^{s_jp^j}-1}{1-q}\right|_{\mathfrak{m}}\leq1/p^j$. Since the $\mathfrak{m}$-adic norm is ultrametric and $\mathbb{Z}_{q,p}$ is the completion of $\mathbb{Z}[q]_{ \mathfrak{m}}$ for this norm, and since moreover $\lim_{j\rightarrow\infty}\left|q^{\beta_{j-1}}\frac{q^{s_jp^j}-1}{1-q}\right|_{\mathfrak{m}}=0$, it follows from~\eqref{betat} that $\frac{1-q^{\beta}}{1-q^t}\in \mathbb{Z}_{q,p}$. 
	
	 \smallskip
	
	(ii). Now, we prove that $\textbf{F}_q(q^{\beta})=q^{\beta p^h}$.  From the definition of the endomorphism  $\textbf{F}_q$ and from Equality~\eqref{qb}, we have
	\begin{multline*}
	\textbf{F}_q^h(q^{\beta})=\\\textbf{F}_q^h(q^{\beta_0})+\textbf{F}_q^h(q^{\beta_0}(q^{s_1p}-1))+\textbf{F}_q^h(q^{\beta_1}(q^{s_2p^2}-1))+\cdots+\textbf{F}_q^h(q^{\beta_{j-1}}(q^{s_{j}p^{j}}-1))+\cdots
	\\=q^{p^h\beta_0}+q^{p^h\beta_0}(q^{s_1p^{h+1}}-1)+q^{p^h\beta_1}(q^{s_2p^{h+2}}-1)+\cdots+q^{p^h\beta_{j-1}}(q^{s_{j}p^{h+j}}-1)+\cdots.
	\end{multline*}	
	But,
	\begin{equation}\label{hbeta}
	q^{p^h\beta_0}+q^{p^h\beta_0}(q^{s_1p^{h+1}}-1)+q^{p^h\beta_1}(q^{s_2p^{h+2}}-1)+\cdots+q^{p^h\beta_{j-1}}(q^{s_{j}p^{h+j}}-1)+\cdots=q^{\beta p^h}.
	\end{equation}
	
	Therefore, $\textbf{F}_q^h(q^{\beta})=q^{\beta p^h}$. Now, we prove that $\textbf{F}_q^h\left(\frac{1-q^{\beta}}{1-q^t}\right)=\frac{1-q^{\beta p^{h}}}{1-q^{tp^{h}}}$. Let $n$ be a positive integer, then $(1-q^n)/(1-q)=1+q+\cdots+q^{n-1}$ and $$\textbf{F}_q^h\left(\frac{1-q^n}{1-q}\right)=1+q^{p^h}+\cdots+q^{p^h(n-1)}=\frac{1-q^{p^hn}}{1-q^{p^h}}.$$
	Then, it follows from \eqref{betat} that,
	\begin{align*}
	&\textbf{F}_q^h\left(\frac{1-q^{\beta}}{1-q^t}\right)\\
	=&\frac{1}{h_t(q^{p^h})}\left[\frac{1-q^{p^h\beta_0}}{1-q^{p^h}}-q^{p^h\beta_0}\frac{q^{s_1p^{h+1}}-1}{1-q^{p^h}}-\cdots-q^{p^h\beta_{j-1}}\frac{q^{s_jp^{h+j}}-1}{1-q^{p^h}}-\cdots\right]\\
	=&\frac{1}{h_t(q^{p^h})(1-q^{p^h})}\left[1-\left(q^{p^h\beta_0}+q^{p^h\beta_0}(q^{s_1p^{h+1}}-1)+\cdots+q^{p^h\beta_{j-1}}(q^{s_{j}p^{h+j}}-1)-\cdots\right)\right]\\
	=&\frac{1-q^{\beta p^h}}{1-q^{tp^h}}.
	\end{align*}
	
	\smallskip
	
	(iii). Following Equality~\eqref{hbeta} we have, $\textbf{ev}(q^{\beta p^h})=1$. As we have just seen
	\begin{equation}\label{et}
	\frac{1-q^{\beta p^h}}{1-q^{tp^h}}=\frac{1}{h_t(q^{p^h})}\left[\frac{1-q^{p^h\beta_0}}{1-q^{p^h}}-q^{p^h\beta_0}\frac{q^{s_1p^{h+1}}-1}{1-q^{p^h}}-\cdots-q^{p^h\beta_{j-1}}\frac{q^{s_jp^{h+j}}-1}{1-q^{p^h}}-\cdots\right].
	\end{equation}
	Note that for every integer $j>0$, $$\frac{q^{s_jp^{h+j}}-1}{1-q^{p^h}}=-(1+q^{p^h}+\cdots+p^{p^h(s_jp^j-1)}).$$
	So, for every integer $j>0$, $\textbf{ev}\left(\frac{q^{s_jp^{h+j}}-1}{1-q^{p^h}}\right)=-s_jp^j$ and we know that, $\textbf{ev}\left(\frac{1}{h_t(q^{p^h})}\right)=1/t$. Therefore, according to Equality~\eqref{et}, $$\textbf{ev}\left(\frac{1-q^{p^h\beta}}{1-q^{p^ht}}\right)=\frac{1}{t}\left[\beta_0+s_1p+\cdots+s_jp^j+\cdots\right]=\frac{\beta}{t}.$$ \end{proof}

\begin{coro}
	Let $\alpha$ be in $\mathbb{Q}\cap\mathbb{Z}_p$  and $\mathcal{H}_{q,\alpha}:=\sigma_q-\frac{1-z}{1-q^{\alpha}z}$. Then $\mathcal{H}_{q,\alpha}\in E_{\mathbb{Z},q,p}[\sigma_q]$. 
\end{coro}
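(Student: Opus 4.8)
The plan is to reduce everything to showing that the unique non-trivial coefficient of $\mathcal{H}_{q,\alpha}$, namely $\frac{1-z}{1-q^{\alpha}z}$, belongs to the ring $E_{\mathbb{Z},q,p}$; granting this, $\mathcal{H}_{q,\alpha}=\sigma_q-\frac{1-z}{1-q^{\alpha}z}$ is by definition an element of $E_{\mathbb{Z},q,p}[\sigma_q]$. Recall that $E_{\mathbb{Z},q,p}$ is the completion of $\mathbb{Z}_{q,p}[z]_S$ for the Gauss norm, where $S$ is the multiplicative set of polynomials $P(z)\in\mathbb{Z}_{q,p}[z]$ with $|P(0)|_{\mathfrak{m}}=1$; in particular $\mathbb{Z}_{q,p}[z]_S\subset E_{\mathbb{Z},q,p}$. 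So it suffices to exhibit $\frac{1-z}{1-q^{\alpha}z}$ as the quotient of an element of $\mathbb{Z}_{q,p}[z]$ by an element of $S$.

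First I would invoke Lemma~\ref{beta}(i): since $\alpha\in\mathbb{Q}\cap\mathbb{Z}_p\subset\mathbb{Z}_p$, taking $\beta=\alpha$ in that lemma yields $q^{\alpha}\in\mathbb{Z}_{q,p}$. Hence both $1-z$ and $1-q^{\alpha}z$ are polynomials with coefficients in $\mathbb{Z}_{q,p}$. Next I would observe that the constant term of $1-q^{\alpha}z$ is $1$ and $|1|_{\mathfrak{m}}=1$, so $1-q^{\alpha}z\in S$ by the very definition of $S$. Consequently $\frac{1-z}{1-q^{\alpha}z}\in\mathbb{Z}_{q,p}[z]_S\subset E_{\mathbb{Z},q,p}$, which completes the argument.

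The only input that is not purely formal is the membership $q^{\alpha}\in\mathbb{Z}_{q,p}$, i.e.\ the fact that $q^{\alpha}$ converges in the $\mathfrak{m}$-adic completion; this rests on writing $\alpha=\sum_{n\geq0}s_np^n$ with $s_n\in\{0,\dots,p-1\}$ and on the estimate $|q^{p^j}-1|_{\mathfrak{m}}\leq 1/p^{j+1}$ of Lemma~\ref{norme}, and it has already been carried out in the proof of Lemma~\ref{beta}. Thus there is essentially no obstacle here: the corollary is a direct consequence of Lemma~\ref{beta} together with the description of the ring $E_{\mathbb{Z},q,p}$ as a completed localization of $\mathbb{Z}_{q,p}[z]$.
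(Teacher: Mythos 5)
Your proof is correct and follows exactly the same route as the paper's: invoke Lemma~\ref{beta} to get $q^{\alpha}\in\mathbb{Z}_{q,p}$, note that $1-q^{\alpha}z$ lies in $S$ since its constant term is $1$, and conclude that $\frac{1-z}{1-q^{\alpha}z}\in\mathbb{Z}_{q,p}[z]_S\subset E_{\mathbb{Z},q,p}$.
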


\begin{proof}
	As $\alpha\in\mathbb{Z}_p$, by Lemma~\ref{beta}, we have  $q^{\alpha}\in\mathbb{Z}_{q,p}$. Furthermore, the polynomial $1-q^{\alpha}z\in S$. Consequently, $\frac{1-z}{1-q^{\alpha}z}\in\mathbb{Z}_{q,p}[z]_{S}$. Whence, $\mathcal{H}_{q,\alpha}\in E_{\mathbb{Z},q,p}[\sigma_q]$. 
\end{proof}
The power series $$f_{\alpha}=\sum\limits_{n\geq0}\frac{(q^\alpha,q)_n}{(q,q)_n}z^n,\quad \frac{(q^{\alpha},q)_n}{(q,q)_n}=\frac{(1-q^{\alpha})(1-q^{\alpha+1})\cdots(1-q^{\alpha+n-1})}{(1-q)(1-q^2)\cdots(1-q^n)}$$ is a solution of $\mathcal{H}_{q,\alpha}$.

\begin{lemm}\label{f}
	Let $\alpha=a/b$ be in $\mathbb{Q}\cap(0,1]$, $p$ be a prime number congruent to 1 modulo $b$ and $\textbf{F}_q$ be the Frobenius endomorphism of $\mathbb{Z}_{q,p}$.  Then:
	
	\begin{enumerate}
		\item for every integer $n\geq0$, $\frac{(q^{\alpha},q)_n}{(q,q)_n}\in\mathbb{Z}_{q,p}$. In particular, the power series $f_{\alpha}$ belongs to $\mathbb{Z}_{q,p}[[z]]$,
		
		\item for every integer $n\geq0$, $$\textbf{F}_q\left(\frac{(q^{\alpha},q)_n}{(q,q)_n}\right)=\frac{\prod\limits_{k=0}^{n-1}1-q^{p(\alpha+k)}}{\prod\limits_{k=1}^{n}1-q^{pk}},$$
		
		\item for every integer $n\geq0$, $$\frac{(q^{\alpha},q)_n}{(q,q)_n}\in \textbf{F}_q\left(\frac{(q^{\alpha},q)_{\lfloor{n/p}\rfloor}}{(q,q)_{\lfloor{n/p}\rfloor}}\right)\mathbb{Z}_{q,p}.$$
	\end{enumerate}
	
\end{lemm}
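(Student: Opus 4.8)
The plan is to work throughout with $a_n:=\frac{(q^\alpha,q)_n}{(q,q)_n}=\frac{\prod_{j=0}^{n-1}(1-q^{\alpha+j})}{\prod_{k=1}^{n}(1-q^{k})}$ and to deduce all three assertions from a single ``blockwise Frobenius factorization'' of this numerator and denominator. First note that $p\equiv1\pmod b$ forces $b\mid p-1$, hence $\gcd(b,p)=1$ and $\alpha=a/b\in\mathbb Z_p$, so Lemma~\ref{beta} applies to $\alpha$ and to every $\alpha+j$ with $j\ge0$. On the denominator side, among $k\in\{1,\dots,n\}$ the multiples of $p$ contribute $\prod_{p\mid k}(1-q^{k})=\prod_{k'=1}^{m}(1-q^{pk'})=\textbf{F}_q\bigl((q,q)_m\bigr)$ with $m:=\lfloor n/p\rfloor$, while each remaining factor $1-q^{k}$ ($p\nmid k$) equals $(1-q)(1+q+\dots+q^{k-1})$ with $1+q+\dots+q^{k-1}$ a unit of $\mathbb Z_{q,p}$ (it is not in $\mathfrak m$ because $p\nmid k$, by Remark~\ref{noyau3}). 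The decisive point is on the numerator side: $p\mid(a+jb)$ holds exactly when $j\equiv j_0\pmod p$ with $j_0:=a(p-1)/b$, and $j_0$ is an integer lying in $\{1,\dots,p-1\}$ (an integer because $b\mid p-1$, in that range because $1\le a\le b$); moreover $a+j_0b=ap$, so for a bad index $j=j_0+pt$ one has $\alpha+j=p(\alpha+t)$ and hence $1-q^{\alpha+j}=\textbf{F}_q(1-q^{\alpha+t})$ by the relation $\textbf{F}_q(q^{\alpha+t})=q^{p(\alpha+t)}$ of Lemma~\ref{beta}. Therefore $\prod_{j\ \mathrm{bad}}(1-q^{\alpha+j})=\textbf{F}_q\bigl((q^\alpha,q)_T\bigr)$, where $T$ is the number of bad $j$ in $\{0,\dots,n-1\}$, while each good numerator factor equals $(1-q)$ times the unit $\frac{1-q^{\alpha+j}}{1-q}\in\mathbb Z_{q,p}$ (in $\mathbb Z_{q,p}$ by Lemma~\ref{beta}; a unit because $\textbf{ev}$ sends it to $\alpha+j$, a $p$-adic unit).

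Since the denominator has $n-m$ good factors and the numerator has $n-T$, the powers of $1-q$ cancel down to exponent $m-T$, and one gets
\[ a_n=(1-q)^{\,m-T}\,U\cdot\frac{\textbf{F}_q\bigl((q^\alpha,q)_T\bigr)}{\textbf{F}_q\bigl((q,q)_m\bigr)},\qquad U\in\mathbb Z_{q,p}^{\times}. \]
A short count shows $T\in\{m,m+1\}$. If $T=m$ the right-hand side is $U\,\textbf{F}_q(a_m)$. If $T=m+1$ it equals $(1-q)^{-1}U\,(1-q^{p(\alpha+m)})\,\textbf{F}_q(a_m)$, and here $\frac{1-q^{p(\alpha+m)}}{1-q}=\phi_p(q)\,\textbf{F}_q\!\bigl(\frac{1-q^{\alpha+m}}{1-q}\bigr)$ lies in $\mathbb Z_{q,p}$ (using $\frac{1-q^{p}}{1-q}=\phi_p(q)$ from Lemma~\ref{norme}, the relation $\frac{1-q^{p\gamma}}{1-q^{p}}=\textbf{F}_q(\frac{1-q^{\gamma}}{1-q})$ and $\frac{1-q^{\gamma}}{1-q}\in\mathbb Z_{q,p}$ from Lemma~\ref{beta}). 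In either case $a_n=c\cdot\textbf{F}_q(a_m)$ with an explicit $c\in\mathbb Z_{q,p}$, which is precisely the third assertion. Feeding this into a strong induction on $n$ (base $a_0=1$; $m<n$) yields $a_n\in\mathbb Z_{q,p}$ for all $n$, i.e. the first assertion, and in particular $f_\alpha\in\mathbb Z_{q,p}[[z]]$. The second assertion is then immediate: $(q,q)_n\,a_n=(q^\alpha,q)_n$ is an identity in $\mathbb Z_{q,p}$, and applying the ring endomorphism $\textbf{F}_q$ together with $\textbf{F}_q(1-q^{\alpha+k})=1-q^{p(\alpha+k)}$ gives $\bigl(\prod_{k=1}^{n}(1-q^{pk})\bigr)\textbf{F}_q(a_n)=\prod_{k=0}^{n-1}(1-q^{p(\alpha+k)})$, which is the claimed formula after dividing in $\mathrm{Frac}(\mathbb Z_{q,p})$.

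I expect the substantive point to be the numerator factorization: recognizing the bad residue $j_0=a(p-1)/b$ and observing $a+j_0b=ap$, so that $\alpha+(j_0+pt)=p(\alpha+t)$ and a whole block of bad numerator factors collapses to a single $\textbf{F}_q$. This is exactly where the hypotheses $\alpha=a/b\in(0,1]$ and $p\equiv1\pmod b$ are genuinely needed — they make $j_0$ an integer in $\{1,\dots,p-1\}$ and force $a+j_0b$ to be $ap$ rather than a higher multiple of $p$. The rest is bookkeeping: matching the two counts of $(1-q)$-factors, checking $T\in\{m,m+1\}$, and in the case $T=m+1$ verifying that the spare $(1-q)^{-1}$ is absorbed by the $\phi_p(q)$ extracted from $1-q^{p(\alpha+m)}$, so that the witness $c$ really lies in $\mathbb Z_{q,p}$.
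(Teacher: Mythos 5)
Your proof is correct, and it takes a genuinely different route from the paper's. The paper proves (i) first and directly: after isolating the ``bad'' block $\prod_{k=0}^{n_1-1}(1-q^{p(\alpha+k)})/\prod_{k=1}^{n_1}(1-q^{pk})$ (with $n_1=\lfloor n/p\rfloor$), it constructs an injection $\gamma:\{1,\ldots,n_1\}\cap p\mathbb Z\to\{0,\ldots,n_1-1\}$ tracking $p$-adic valuations, extends it to a bijection $\widetilde\gamma:\{1,\ldots,n_1\}\to\{0,\ldots,n_1-1\}$, and shows each paired factor $\frac{1-q^{p(\alpha+\widetilde\gamma(k))}}{1-q^{pk}}$ lies in $\mathbb Z_{q,p}$ via iterated applications of Lemma~\ref{beta}; parts (ii) and (iii) then reuse that same bijection. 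You instead run a strong induction on $n$, using the blockwise factorization to land on the recursion $a_n=c\cdot\textbf{F}_q(a_m)$ (your stronger form of (iii), with the explicit witness $c\in\mathbb Z_{q,p}$) and letting the induction hypothesis absorb the hard part. The trade-off: the paper's argument is non-inductive and yields (i) as a standalone fact, but it is combinatorially heavier; yours is shorter and more conceptual, proving (i) and (iii) simultaneously, and it makes (ii) a one-line formal consequence of applying the ring endomorphism $\textbf{F}_q$ to $(q,q)_n\,a_n=(q^\alpha,q)_n$ rather than re-running the bijection. Your handling of the tail is also slightly different (you allow $T=m+1$ and discharge the spare $(1-q)^{-1}$ with $\phi_p(q)=\frac{1-q^p}{1-q}$, whereas the paper always keeps the bad tail factor inside the ``good'' ratio), but both bookkeeping choices work; your count $T\in\{m,m+1\}$ and the unit claims via $\textbf{ev}$ all check out against Lemma~\ref{beta} and Remark~\ref{noyau3}.
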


\begin{proof}
	
	(i). Let $n$  be a non-negative integer.  We start by writing  $n$ in base $p$, that is $$n=r_0+r_1p+\cdots+ r_{l-1}p^{l-1}+r_lp^l,$$ where $r_0,\ldots, r_l$ belong to $\{0,1,\ldots,p-1\}$.  Now put $n_1=r_1+r_2p+\cdots+r_{l}p^{l-1}$; then we have $n=n_1p+r_0$.  As $p\equiv1\mod b$ and $0<\alpha\leq1$, then there is a unique $j\in\{1,\ldots,p-1\}$ such that $\alpha+j=p\alpha$. Note that,
	
	\begin{equation}\label{t0}
	\frac{(q^{\alpha},q)_n}{(q,q)_n}=\frac{\prod\limits_{k=0}^{n_1-1}\prod\limits_{i=0}^{p-1}(1-q^{\alpha+kp+i})}{\prod\limits_{k=0}^{n_1-1}\prod\limits_{i=1}^{p}(1-q^{kp+i})}\cdot
	\frac{(1-q^{\alpha+n_1p})\cdots(1-q^{\alpha+n_1p+r_0-1})}{(1-q^{n_1p+1})\cdots(1-q^{n_1p+r_0})}.
	\end{equation}
	Moreover, for every $k\in\{0,\ldots, n_1-1\}$, $(\alpha+kp)+j=p(\alpha+k)$.  Therefore,
	\begin{equation}\label{t}
	\frac{\prod\limits_{k=0}^{n_1-1}\prod\limits_{i=0}^{p-1}(1-q^{\alpha+kp+i})}{\prod\limits_{k=0}^{n_1-1}\prod\limits_{i=1}^{p}(1-q^{kp+i})}=\frac{\prod\limits_{k=0}^{n_1-1}\prod\limits_{i=0,i\neq j}^{p-1}(1-q^{\alpha+kp+i})}{\prod\limits_{k=0}^{n_1-1}\prod\limits_{i=1}^{p-1}(1-q^{kp+i})}\cdot\frac{\prod\limits_{k=0}^{n_1-1}(1-q^{p(\alpha+k)})}{\prod\limits_{k=1}^{n_1}(1-q^{pk})}.
	\end{equation}
	Then, from Equalities~\eqref{t0} and~\eqref{t}  we obtain
	\begin{small}
		\begin{equation}\label{t1}
		\frac{(q^{\alpha},q)_n}{(q,q)_n}=\frac{\prod\limits_{k=0}^{n_1-1}\prod\limits_{i=0,i\neq j}^{p-1}(1-q^{\alpha+kp+i})}{\prod\limits_{k=0}^{n_1-1}\prod\limits_{i=1}^{p-1}(1-q^{kp+i})}\cdot\frac{(1-q^{\alpha+n_1p})\cdots(1-q^{\alpha+n_1p+r_0-1})}{(1-q^{n_1p+1})\cdots(1-q^{n_1p+r_0})}\cdot\frac{\prod\limits_{k=0}^{n_1-1}(1-q^{p(\alpha+k)})}{\prod\limits_{k=1}^{n_1}(1-q^{pk})}.
		\end{equation} 
	\end{small}
	Lemme~\ref{beta} implies that, if $m$ is an integer and $t$ is a positive integer such that $p$ does not divide $t$ then $\frac{1-q^{\alpha+m}}{1-q^{t}}\in\mathbb{Z}_{q,p}$. Therefore,
	\begin{equation}\label{t2}
	\frac{\prod\limits_{k=0}^{n_1-1}\prod\limits_{i=0,i\neq j}^{p-1}(1-q^{\alpha+kp+i})}{\prod\limits_{k=0}^{n_1-1}\prod\limits_{i=1}^{p-1}(1-q^{kp+i})}\cdot
	\frac{(1-q^{\alpha+n_1p})\cdots(1-q^{\alpha+n_1p+r_0-1})}{(1-q^{n_1p+1})\cdots(1-q^{n_1p+r_0})}\in\mathbb{Z}_{q,p}.
	\end{equation} 	
	
	According to  \eqref{t1} and \eqref{t2}, in order to prove that $\frac{(q^{\alpha},q)_n}{(q,q)_n}\in\mathbb{Z}_{q,p}$  it is sufficient to see that $$\frac{\prod\limits_{k=0}^{n_1-1}1-q^{p(\alpha+k)}}{\prod\limits_{k=1}^{n_1}1-q^{pk}}\in\mathbb{Z}_{q,p}.$$
	We set $\mathcal{A}=\{0,1,\ldots, n_1-1\}$ and $\mathcal{A'}=\{1,2,\ldots,n_1\}$. Recall that $j\in\{1,\ldots,p-1\}$ is the unique integer such that $\alpha+j=p\alpha$. Now, let us consider the set $\mathfrak{C'}=\mathcal{A'}\cap p\mathbb{Z}$ and the map $\gamma:\mathfrak{C'}\rightarrow\mathcal{A}$ defined as follows:
	\begin{align*}
	\gamma(k)=&j+jp+\cdots+jp^{v_p(k)-1}+k-p^{v_p(k)},
	\end{align*}
	where $v_p(k)$ is the $p$-adic valuation of $k$. Now, we show that $\gamma(k)\in\mathcal{A}$. It is clear that $\gamma(k)\geq0$ and $p^{v_p(k)}\alpha=\alpha+j+jp+\cdots+jp^{v_p(k)-1}$ because $\alpha+j=p\alpha$. As $\alpha\in(0,1]$, then
	\begin{align*}
	\gamma(k)<&\alpha+j+jp+\cdots+jp^{v_p(k)-1}+k-p^{v_p(k)}\\=&p^{v_p(k)}\alpha+k-p^{v_p(k)}\\\leq&p^{v_p(k)}+k-p^{v_p(k)}\\=&k.
	\end{align*}
	Therefore, $\gamma(k)<k\leq n_1$.  Moreover,  it is not hard to see  that $\gamma$ is injective. \\
	Let $k$ be in $\mathfrak{C'}$ and we wet $t=k/p^{v_p(k)}$. Since $p$ does not divide $t$ and $\alpha+t-1\in\mathbb{Z}_p$,  we are in a position to apply Lemma~\ref{beta} to deduce that $\frac{1-q^{\alpha+t-1}}{1-q^t}\in\mathbb{Z}_{q,p}$ and once again by Lemma~\ref{beta}, we have  $$\textbf{F}_q^{v_p(k)+1}\left(\frac{1-q^{\alpha+t-1}}{1-q^t}\right)=\frac{1-q^{p^{v_p(k)+1}(\alpha+t-1)}}{1-q^{tp^{v_p(k)+1}}}\in\mathbb{Z}_{q,p}.$$
	Note that
	\begin{align*}
	p^{v_p(k)+1}(\alpha+t-1)=&p^{v_p(k)+1}(\alpha+k/p^{v_p(k)}-1)\\
	=&p(p^{v_p(k)}\alpha+k-p^{v_p(k)})\\
	=&p(\alpha+j+jp+\cdots+jp^{v_p(k)-1}+k-p^{v_p(k)})\\
	=&p(\alpha+\gamma(k)).
	\end{align*}
	Therefore,
	\begin{equation}\label{61}
	\textbf{F}_q^{v_p(k)+1}\left(\frac{1-q^{\alpha+t-1}}{1-q^t}\right)=\frac{1-q^{p(\alpha+\gamma(k))}}{1-q^{pk}}\in\mathbb{Z}_{q,p}.
	\end{equation}
	Since $k\in\mathfrak{C'}$ is arbitrary, we get that for all $k\in\mathfrak{C'}$,  $\frac{1-q^{p(\alpha+\gamma(k))}}{1-q^{pk}}\in\mathbb{Z}_{q,p}$. So,
	\begin{equation}\label{62}
	\prod\limits_{k\in\mathfrak{C'}}\frac{1-q^{p(\alpha+\gamma(k))}}{1-q^{pk}}\in\mathbb{Z}_{q,p}.
	\end{equation}
	Let $\mathfrak{C}$ be the image of $\gamma$.  Since $\gamma$ is injective, it follows that $\mathfrak{C}$ and $\mathfrak{C'}$ have the same cardinal. Moreover, note that $\mathcal{A}$ and $\mathcal{A'}$ have the same cardinal and therefore, there is a bijection $\widetilde{\gamma}:\mathcal{A'}\rightarrow\mathcal{A}$ such that $\widetilde{\gamma}(x)=\gamma(x)$ for all $x\in\mathfrak{C'}$. So, if $x\in\mathcal{A'}\setminus\mathfrak{C'}$ then $\widetilde{\gamma}(x)\in\mathcal{A}\setminus\mathfrak{C}$. If $k\in\mathcal{A'}\setminus\mathfrak{C'}$ then $p$ does not divide $k$ and since $\alpha+\widetilde{\gamma}(k)\in\mathbb{Z}_{p}$, from Lemma~\ref{beta}, we have $\frac{1-q^{\alpha+\widetilde{\gamma}(k)}}{1-q^{k}}\in\mathbb{Z}_{q,p}$ and once again by Lemma~\ref{beta} we have 
	\begin{equation}\label{63}
	\textbf{F}_q\left(\frac{1-q^{\alpha+\widetilde{\gamma}(k)}}{1-q^{k}}\right)=\frac{1-q^{p(\alpha+\widetilde{\gamma}(k))}}{1-q^{pk}}\in\mathbb{Z}_{q,p}.
	\end{equation}
	For this reason we have,
	\begin{equation}\label{64}
	\prod\limits_{k\in\mathcal{A'}\setminus\mathfrak{C'}}\frac{1-q^{p(\alpha+\widetilde{\gamma}(k))}}{1-q^{pk}}\in\mathbb{Z}_{q,p}.
	\end{equation}
	Finally,  as $\widetilde{\gamma}:\mathcal{A'}\rightarrow\mathcal{A}$ is a bijection such that  $\mathfrak{C}=\widetilde{\gamma}(\mathfrak{C'})=\gamma(\mathfrak{C'})$ and $\gamma:\mathfrak{C'}\rightarrow\mathcal{A}$ is injective then,
	
	\begin{align*}
	\frac{\prod\limits_{k=0}^{n_1-1}1-q^{p(\alpha+k)}}{\prod\limits_{k=1}^{n_1}1-q^{pk}}=&\frac{\prod\limits_{k\in\mathfrak{C}}(1-q^{p(\alpha+k)})\prod\limits_{k\in\mathcal{A}\setminus\mathfrak{C}}(1-q^{p(\alpha+k)})}{\prod\limits_{k\in\mathfrak{C'}}(1-q^{pk})\prod\limits_{k\in\mathcal{A'}\setminus\mathfrak{C'}}(1-q^{pk})}\\=&\prod\limits_{k\in\mathfrak{C'}}\frac{1-q^{p(\alpha+\widetilde{\gamma}(k))}}{1-q^{pk}}\prod\limits_{k\in\mathcal{A'}\setminus\mathfrak{C'}}\frac{1-q^{p(\alpha+\widetilde{\gamma}(k))}}{1-q^{pk}}\\
	=&\prod\limits_{k\in\mathfrak{C'}}\frac{1-q^{p(\alpha+\gamma(k))}}{1-q^{pk}}\prod\limits_{k\in\mathcal{A'}\setminus\mathfrak{C'}}\frac{1-q^{p(\alpha+\widetilde{\gamma}(k))}}{1-q^{pk}}.
	\end{align*}
	Therefore, it follows from \eqref{62} and from \eqref{64} that, $$\frac{\prod\limits_{k=0}^{n_1-1}1-q^{p(\alpha+k)}}{\prod\limits_{k=1}^{n_1}1-q^{pk}}\in\mathbb{Z}_{q,p}.$$
	(ii). Let $n_1$ be a non-negative integer.  According to the point (i), $\frac{(q^{\alpha},q)_{n_1}}{(q,q)_{n_1}}\in\mathbb{Z}_{q,p}$. We are going to show that $$\textbf{F}_q\left(\frac{(q^{\alpha},q)_{n_1}}{(q,q)_{n_1}}\right)=\frac{\prod\limits_{k=0}^{n_1-1}1-q^{p(\alpha+k)}}{\prod\limits_{k=1}^{n_1}1-q^{pk}}.$$
	Recall that $\textbf{F}_q(q)=q^p$. We set $\mathcal{A}=\{0,1,\ldots,n_1-1\}$, $\mathcal{A'}=\{1,2,\ldots,n_1\}$ and $\mathfrak{C'}=\mathcal{A'}\cap p\mathbb{Z}$. Remember that $\gamma:\mathfrak{C'}\rightarrow\mathcal{A}$ was defined at point (i) as follows $\gamma(k)=j+jp+\cdots+jp^{r-1}+k-p^{v_p(k)}$, where $j\in\{1,\ldots,p-1\}$ is the unique integer such that $\alpha+j=p\alpha$ and $v_p(k)$ is the $p$-adic valuation of $k$. Finally, recall that $\widetilde{\gamma}:\mathcal{A'}\rightarrow\mathcal{A}$ is a bijection such that $\widetilde{\gamma}(x)=\gamma(x)$ for all $x\in\mathfrak{C'}$.  Proceeding exactly as in the  point (i) we have	
	\begin{equation}\label{65}
	\frac{\prod\limits_{k=0}^{n_1-1}1-q^{p(\alpha+k)}}{\prod\limits_{k=1}^{n_1}1-q^{pk}}=\prod\limits_{k\in\mathfrak{C'}}\frac{1-q^{p(\alpha+\gamma(k))}}{1-q^{pk}}\prod\limits_{k\in\mathcal{A'}\setminus\mathfrak{C'}}\frac{1-q^{p(\alpha+\widetilde{\gamma}(k))}}{1-q^{pk}}.
	\end{equation}
	
	Let $k$ be in $\mathfrak{C'}$ and $t=k/p^{v_p(k)}$.  As $p$ does not divide $t$ and $\alpha+t-1\in\mathbb{Z}_p$ then, from Lemma~\ref{beta}, we obtain $\frac{1-q^{\alpha+t-1}}{1-q^t}\in\mathbb{Z}_{q,p}$ and $$ \textbf{F}_{q}^{v_p(k)}\left(\frac{1-q^{\alpha+t-1}}{1-q^t}\right)=\frac{1-q^{p^{v_p(k)}(\alpha+t-1)}}{1-q^{p^{v_p(k)}t}}.$$
	Since $p^{v_p(k)}t=k$ and $p^{v_p(k)}(\alpha+t-1)=\alpha+j+jp+\cdots+jp^{v_p(k)-1}+k-p^{v_p(k)}=\alpha+\gamma(k)$,  we have
	\begin{equation}\label{66}
	\textbf{F}_q^{v_p(k)}\left(\frac{1-q^{\alpha+t-1}}{1-q^t}\right)=\frac{1-q^{\alpha+\gamma(k)}}{1-q^{k}}.
	\end{equation}
	As $\textbf{F}_q^{v_p(k)+1}=\textbf{F}_q\circ \textbf{F}_q^{v_p(k)}$ it follows from \eqref{61} and \eqref{66} that,
	$$\frac{1-q^{p(\alpha+\gamma(k))}}{1-q^{pk}}=\textbf{F}_q\left(\frac{1-q^{\alpha+\gamma(k)}}{1-q^{k}}\right)=\textbf{F}_q^{v_p(k)+1}\left(\frac{1-q^{\alpha+t-1}}{1-q^t}\right).$$
	Therefore, 
	\begin{equation}\label{67}
	\prod\limits_{k\in\mathfrak{C'}}\frac{1-q^{p(\alpha+\gamma(k))}}{1-q^{pk}}=\prod\limits_{k\in\mathfrak{C'}}\textbf{F}_q\left(\frac{1-q^{\alpha+\gamma(k)}}{1-q^{k}}\right)=\textbf{F}_q\left(\prod\limits_{k\in\mathfrak{C'}}\frac{1-q^{\alpha+\gamma(k)}}{1-q^{k}}\right).
	\end{equation}
	Moreover,  from Equality~\eqref{63} we have, 
	\begin{equation}\label{68}
	\prod\limits_{k\in\mathcal{A'}\setminus\mathfrak{C'}}\frac{1-q^{p(\alpha+\widetilde{\gamma}(k))}}{1-q^{pk}}=\prod\limits_{k\in\mathcal{A'}\setminus\mathfrak{C'}}\textbf{F}_q\left(\frac{1-q^{(\alpha+\widetilde{\gamma}(k))}}{1-q^{k}}\right)=\textbf{F}_q\left(\prod\limits_{k\in\mathcal{A'}\setminus\mathfrak{C'}}\frac{1-q^{(\alpha+\widetilde{\gamma}(k))}}{1-q^{k}}\right).
	\end{equation}
	Equalities~\eqref{65}, \eqref{67}, and \eqref{68} imply that
	\begin{align*}
	\frac{\prod\limits_{k=0}^{n_1-1}1-q^{p(\alpha+k)}}{\prod\limits_{k=1}^{n_1}1-q^{pk}}=&\textbf{F}_q\left(\prod\limits_{k\in\mathfrak{C'}}\frac{1-q^{\alpha+\gamma(k)}}{1-q^{k}}\right)\cdot \textbf{F}_q\left(\prod\limits_{k\in\mathcal{A'}\setminus\mathfrak{C'}}\frac{1-q^{(\alpha+\widetilde{\gamma}(k))}}{1-q^{k}}\right)\\
	=&\textbf{F}_q\left(\prod\limits_{k\in\mathfrak{C'}}\frac{1-q^{\alpha+\gamma(k)}}{1-q^{k}}\prod\limits_{k\in\mathcal{A'}\setminus\mathfrak{C'}}\frac{1-q^{(\alpha+\widetilde{\gamma}(k))}}{1-q^{k}}\right).
	\end{align*}
	Finally, since $\widetilde{\gamma}:\mathcal{A'}\rightarrow\mathcal{A}$ is a bijection and $\widetilde{\gamma}(x)=\gamma(x)$ for all $x\in\mathfrak{C'}$,  $$\prod\limits_{k\in\mathfrak{C'}}\frac{1-q^{\alpha+\gamma(k)}}{1-q^{k}}\prod\limits_{k\in\mathcal{A'}\setminus\mathfrak{C'}}\frac{1-q^{(\alpha+\widetilde{\gamma}(k))}}{1-q^{k}}=\frac{(q^{\alpha},q)_{n_1}}{(q,q)_{n_1}}.$$
	Therefore, $$\frac{\prod\limits_{k=0}^{n_1-1}1-q^{p(\alpha+k)}}{\prod\limits_{k=1}^{n_1}1-q^{pk}}=\textbf{F}_q\left(\frac{(q^{\alpha},q)_{n_1}}{(q,q)_{n_1}}\right).$$
	(iii). Let $n$ be a non-negative integer. By writing  $n$ in base $p$ we obtain $n=r_0+r_1p+\cdots+ r_{l-1}p^{l-1}+r_lp^l$, where $r_0,\ldots, r_l$ belong to $\{0,1,\ldots,p-1\}$. Put now $n_1=r_1+r_2p+\cdots+r_{l}p^{l-1}$; then $n_1=\lfloor{n/p}\rfloor$. From Equation~\eqref{t1} and from (ii) we obtain 
	
	\begin{equation}\label{reduction}
	\frac{(q^{\alpha},q)_n}{(q,q)_n}=\frac{\prod\limits_{k=0}^{n_1-1}\prod\limits_{i=0,i\neq j}^{p-1}(1-q^{\alpha+kp+i})}{\prod\limits_{k=0}^{n_1-1}\prod\limits_{i=1}^{p-1}(1-q^{kp+i})}\cdot\frac{(1-q^{\alpha+n_1p})\cdots(1-q^{\alpha+n_1p+r_0-1})}{(1-q^{n_1p+1})\cdots(1-q^{n_1p+r_0})}\cdot \textbf{F}_q\left(\frac{(q^{\alpha},q)_{\lfloor{n/p}\rfloor}}{(q,q)_{\lfloor{n/p}\rfloor}}\right)
	\end{equation}
But, by Equation~\eqref{t2}	, we know that $\frac{\prod\limits_{k=0}^{n_1-1}\prod\limits_{i=0,i\neq j}^{p-1}(1-q^{\alpha+kp+i})}{\prod\limits_{k=0}^{n_1-1}\prod\limits_{i=1}^{p-1}(1-q^{kp+i})}\cdot\frac{(1-q^{\alpha+n_1p})\cdots(1-q^{\alpha+n_1p+r_0-1})}{(1-q^{n_1p+1})\cdots(1-q^{n_1p+r_0})}$ belongs to $\mathbb{Z}_{q,p}$. Therefore, $$\frac{(q^{\alpha},q)_n}{(q,q)_n}\in\textbf{F}_q\left(\frac{(q^{\alpha},q)_{\lfloor{n/p}\rfloor}}{(q,q)_{\lfloor{n/p}\rfloor}}\right)\mathbb{Z}_{q,p}.$$ 
\end{proof}
Theorem~\ref{sfF}, stated below, ensures that under certains hypotheses the operator $\mathcal{H}_{q,\alpha}$ has a strong Frobenius structure with period 1.  

\begin{theo}\label{sfF}
	If $\alpha=a/b\in\mathbb{Q}\cap(0,1]$ and $p$ is a prime number congruent to 1 modulo $b$, then the $q$-difference operator $\mathcal{H}_{q,\alpha}$ has a strong Frobenius structure for $p$ with period $1$.
\end{theo}

Our proof of Theorem~\ref{sfF} is based on Lemma~\ref{elementanalytique} which is proved at end of this section.

\begin{lemm}\label{elementanalytique}
	Let $\alpha=a/b$ be in $\mathbb{Q}\cap(0,1]$ and $p$ be a prime number congruent to 1 modulo $b$.  Let $\{B^{(i)}(k)\}_{k\in\mathbb{N},i\geq-1}$ be the set of sequences with coefficients in $\mathbb{Z}_{q,p}$ which are defined as follows: $B^{(-1)}(n)=\frac{(q^{\alpha},q)_n}{(q,q)_n}$ and for every $i\geq0$, $B^{(i)}(n)=\textbf{F}^{i+1}_q(B^{(-1)}(n))$.  For each $s\geq0$, we set $F_s=\sum\limits_{n=0}^{p^s-1}B^{(-1)}(n)z^n$. Then $$f_{\alpha}\textbf{F}_q(F_s)\equiv \textbf{F}_q(f_{\alpha})F_{s+1}\mod\mathfrak{m}^{s+1},$$
	where $f_{\alpha}=\sum\limits_{n\geq0}B^{(-1)}(n)z^n$. 
\end{lemm}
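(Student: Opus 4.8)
The plan is to prove the congruence by induction on $s$, exploiting the multiplicative (``Lucas'') structure of $f_\alpha$ provided by Lemma~\ref{f}. Write $c_m:=\textbf F_q(B^{(-1)}(m))$. By Lemma~\ref{f}(iii) and Equation~\eqref{reduction}, for every $n$ there is $D_n\in\mathbb Z_{q,p}$ — the explicit product appearing in \eqref{reduction} — with $B^{(-1)}(n)=D_n\,\textbf F_q(B^{(-1)}(\lfloor n/p\rfloor))$. Setting $\mathcal D_m(z):=\sum_{r=0}^{p-1}D_{mp+r}z^r$ and decomposing each index in base $p$ yields the product forms $f_\alpha=\sum_{m\ge0}c_m\mathcal D_m(z)z^{mp}$, $\textbf F_q(f_\alpha)=\sum_{m\ge0}c_mz^{mp}$, and, by truncation, $F_{s+1}=\sum_{m<p^s}c_m\mathcal D_m(z)z^{mp}$, $\textbf F_q(F_s)=\sum_{m<p^s}c_mz^{mp}$. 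Multiplying these out, $g_s:=f_\alpha\textbf F_q(F_s)-\textbf F_q(f_\alpha)F_{s+1}$ collapses to
\[
g_s=\sum_{m\ge0}\ \sum_{l<p^s}c_mc_l\,z^{(m+l)p}\bigl(\mathcal D_m(z)-\mathcal D_l(z)\bigr).
\]

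The heart of the argument is to reduce this sum modulo $\mathfrak m^{s+1}$ to something manifestly zero. For that I need a stability property of the Dwork ratios: $D_a\equiv D_b\pmod{\mathfrak m^{t+1}}$ whenever $p^{t+1}\mid a-b$, together with its iterated version for the products $\mathcal E^{(k)}_n:=\prod_{l=0}^{k-1}\textbf F_q^{\,l}(D_{\lfloor n/p^l\rfloor})$ (so that $B^{(-1)}(n)=\mathcal E^{(k)}_n\,\textbf F_q^{\,k}(B^{(-1)}(\lfloor n/p^k\rfloor))$), namely $\mathcal E^{(s)}_n\equiv B^{(-1)}(n\bmod p^s)\pmod{\mathfrak m^{s+1}}$. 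Granting these, first $\mathcal D_m\equiv\mathcal D_{m\bmod p^s}\pmod{\mathfrak m^{s+1}}$ coefficientwise, because $(mp+r)-((m\bmod p^s)p+r)$ is divisible by $p^{s+1}$; and second, writing $m=m_0+p^sm_1$ with $m_0<p^s$ and applying $\textbf F_q$ to the $\mathcal E^{(s)}$-congruence (which preserves $\mathfrak m^{s+1}$ since $|\textbf F_q(\cdot)|_{\mathfrak m}\le|\cdot|_{\mathfrak m}$), one gets $c_{m_0+p^sm_1}\equiv c_{m_0}\textbf F_q^{\,s+1}(B^{(-1)}(m_1))\pmod{\mathfrak m^{s+1}}$, whence $\sum_{m_1\ge0}c_{m_0+p^sm_1}z^{p^{s+1}m_1}\equiv c_{m_0}\textbf F_q^{\,s+1}(f_\alpha)\pmod{\mathfrak m^{s+1}}$. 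Substituting these into the double sum, $g_s$ becomes, modulo $\mathfrak m^{s+1}$,
\[
\textbf F_q^{\,s+1}(f_\alpha)\sum_{m_0<p^s}\sum_{l<p^s}c_{m_0}c_l\,z^{(m_0+l)p}\bigl(\mathcal D_{m_0}(z)-\mathcal D_l(z)\bigr),
\]
and this inner double sum is antisymmetric under $m_0\leftrightarrow l$, hence vanishes. Thus $g_s\equiv0\pmod{\mathfrak m^{s+1}}$.

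The base case $s=0$ is the same computation with all products trivial: it reads $f_\alpha\equiv\textbf F_q(f_\alpha)F_1\pmod{\mathfrak m}$. Reducing modulo $\mathfrak m$ through the surjection onto $\mathbb F_p$ (Remark~\ref{noyau3}) and using $\textbf{ev}\circ\textbf F_q=\textbf{ev}$, this becomes $\overline{f_\alpha}=\overline{f_\alpha}(z^p)\cdot(\overline{f_\alpha}\bmod z^p)$ in $\mathbb F_p[[z]]$; since $\textbf{ev}(f_\alpha)=(1-z)^{-\alpha}$ by Lemma~\ref{beta} together with the regrouping used in the proof of Lemma~\ref{f}, this is the classical Lucas congruence $(1-z)^{-\alpha}\equiv(1-z)^{(p-1)\alpha}(1-z^p)^{-\alpha}\pmod p$, valid because $(1-z)^p\equiv1-z^p\pmod p$ and $(p-1)\alpha=j\in\{1,\dots,p-1\}$ is an integer — this is exactly where the hypothesis $p\equiv1\bmod b$ enters, and it forces $F_1\equiv(1-z)^j\pmod p$.

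The main obstacle is the stability of the iterated products $\mathcal E^{(s)}_n$ modulo $\mathfrak m^{s+1}$: the endomorphism $\textbf F_q$ does not raise the $\mathfrak m$-adic valuation uniformly ($\textbf F_q(p)=p$, while $\textbf F_q(1-q)=(1-q)\phi_p(q)\in\mathfrak m^2$), so a crude telescoping of $D_a\equiv D_b$ through the $s$ Frobenius twists loses too much. I would deal with this by strengthening the induction so as to keep separate track of the ``pure $p$-power'' component in the bigraded $\mathfrak m$-adic filtration of $\mathbb Z_{q,p}$ — equivalently, by proving $\mathcal E^{(k)}_n\equiv B^{(-1)}(n\bmod p^k)$ modulo a subgroup of $\mathfrak m^k$ on which $\textbf F_q$ does land in $\mathfrak m^{k+1}$ — and verifying this directly from the explicit form of $D_n$ as a product of factors $\tfrac{1-q^{\alpha+m}}{1-q^{t}}$ with $p\nmid t$, estimating each factor's $\mathfrak m$-valuation via Lemma~\ref{norme} and Lemma~\ref{beta}, the decisive point being that the integer $j$ with $\alpha+j=p\alpha$ determines precisely which numerator factors carry a factor of $p$.
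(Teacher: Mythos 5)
Your decomposition $f_\alpha=\sum_m c_m\mathcal D_m(z)z^{mp}$ and the reduction of $g_s$ to a double sum that ought to be killed by antisymmetry is attractive and genuinely different from what the paper does. The paper does not attempt any such direct computation: it shows that the sequences $B^{(i)}(k)$ satisfy the four hypotheses of a version of Dwork's ``$p$-adic cycles'' lemma (Lemma~\ref{dwork}, quoting \cite[Theorem~2]{padiccycles}), the crucial one being condition~(ii), namely $D_{n+mp^{s+1}}\equiv D_n\pmod{\mathfrak m^{s+1}}$ (this is exactly your single-step stability for the Dwork ratios), and then cites the lemma to obtain the conclusion. Dwork's lemma internalises the passage from single-step stability to the $s$-fold statement by a subtler induction carrying an extra index $r$, and is precisely designed to avoid the loss you run into.

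The gap in your route is the one you yourself flag, and it is real, not merely a matter of polish. To collapse the double sum you need $\mathcal E^{(s)}_{m_0+p^sm_1}\equiv\mathcal E^{(s)}_{m_0}$ to a precision that survives one more $\textbf F_q$. The $l$-th factor $\textbf F_q^{\,l}(D_{\lfloor n/p^l\rfloor})$ is only controlled modulo $\textbf F_q^{\,l}$ of an ideal of the shape $(\phi_p(q)\cdots\phi_p(q^{p^{s-l-1}}))$, i.e.\ modulo $(\phi_p(q^{p^l})\cdots\phi_p(q^{p^{s-1}}))$; as $l$ runs from $0$ to $s-1$ these ideals form an increasing chain whose union is just $(\phi_p(q^{p^{s-1}}))\subset\mathfrak m$, so the telescoped congruence for the product is only modulo $\mathfrak m$, not $\mathfrak m^{s+1}$. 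Your remedy --- a ``bigraded'' filtration of $\mathfrak m$ and a subgroup of $\mathfrak m^{k}$ that $\textbf F_q$ pushes into $\mathfrak m^{k+1}$ --- is not formulated or proved, and it is not clear that such a subgroup, stable under the arithmetic you need (products of the $D$-factors, the $\mathbb Z_{q,p}$-module structure), exists. Unless you supply that, the argument does not close; the paper's way out is to invoke Dwork's lemma, whose hypotheses you have essentially already verified in establishing the single-step $D_a\equiv D_b$ congruence, so the cleanest repair is to cite Lemma~\ref{dwork} at the point where your iteration breaks down.
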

By assuming Lemme~\ref{elementanalytique}, we are in a position to prove Theorem~\ref{sfF}.
\begin{proof}[Proof of Theorem~\ref{sfF}]
	 We set $f_{\alpha}=\sum\limits_{n\geq0}\frac{(q^\alpha,q)_n}{(q,q)_n}z^n$.  Lemma~\ref{f} implies that $f_{\alpha}\in\mathbb{Z}_{q,p}[[z]]$ and we can then apply $\textbf{F}_q$ to each coefficient of $f_{\alpha}$. For this reason  we can consider the power series $\textbf{F}_q(f_{\alpha})=\sum\limits_{n\geq0}\textbf{F}_q\left(\frac{(q^\alpha,q)_n}{(q,q)_n}\right)z^{np}$.\\
We set $H=\frac{f_{\alpha}}{\textbf{F}_q(f_{\alpha})}$. The companion matrix of $\mathcal{H}_{q,\alpha}$ is $A_q=\frac{1-z}{1-q^{\alpha}z}$. As $f_{\alpha}$ is a solution of $\mathcal{H}_{q,\alpha}$ then $\textbf{F}_q(f_{\alpha})$ is a solution of $\sigma_q -A^{\textbf{F}_q}_q$, where $\textbf{F}_q$ is the Frobenius endomorphisme of $E_{\mathbb{Z},q,p}$ constructed in Proposition~\ref{prop: proprietes de E_q,p3}. So, $$\sigma_q(H)A^{\textbf{F}_q}_q=A_qH.$$
 Thus, in order to show that $\mathcal{H}_{q,\alpha}$ has a strong Frobenius structure with period 1, it suffices to prove that  $H$ is an invertible element of $E_{\mathbb{Z},q,p}$. Indeed, we are going to see that this is the case. For every $s\geq0$, we set $F_s=\sum\limits_{n=0}^{p^s-1}\frac{(q^{\alpha},q)_n}{(q,q)_n}z^n$. By Lemma~\ref{f}, we obtain $F_s\in\mathbb{Z}_{q,p}[z]$. We set $\textbf{F}_q(F_{s})=\sum\limits_{n=0}^{p^s-1}\textbf{F}_q\left(\frac{(q^{\alpha},q)_n}{(q,q)_n}\right)z^{np}$.  Note the rational function $\frac{F_{s+1}}{\textbf{F}_q(F_s)}$ belongs to $\mathbb{Z}_{q,p}[z]_{S}$ because it follows from Lemma~\ref{f} that the polynomials $F_{s+1}, \textbf{F}_q(F_s)$ are in $\mathbb{Z}_{q,p}[z]$ and it is clear that $\textbf{F}_q(F_{s})(0)=1$, then $\textbf{F}_q(F_s)\in~S$. According to Lemma~\ref{elementanalytique}, for every $s\geq0$, $$f_{\alpha}\textbf{F}_q(F_s)\equiv\textbf{F}_q(f_{\alpha})F_{s+1}\mod\mathfrak{m}^{s+1}.$$
	That is, for every $s\geq0$, $f_{\alpha}\textbf{F}_q(F_s)-\textbf{F}_q(f_{\alpha})F_{s+1}\in\mathfrak{m}^{s+1}\mathbb{Z}_{q,p}[[z]]$. Since $\textbf{F}_q(F_s)\textbf{F}_q(f_{\alpha})(0)=1$, we have $\frac{1}{\textbf{F}_q(F_s)\textbf{F}_q(f_{\alpha})}\in\mathbb{Z}_{q,p}[[z]]$. Therefore, $$\frac{f_{\alpha}}{ \textbf{F} _q(f_{\alpha})}-\frac{F_{s+1}}{\textbf{F}_q(F_s)}=\frac{f_{\alpha}\textbf{F}_q(F_s)-\textbf{F}_q(f_{\alpha})F_{s+1}}{\textbf{F}_q(F_s) \textbf{F} _q(f_{\alpha})}\in\mathfrak{m}^{s+1}\mathbb{Z}_{q,p}[[z]].$$ Then, $$\left|\frac{f_{\alpha}}{\textbf{F}_q(f_{\alpha})}-\frac{F_{s+1}}{\textbf{F}_q(F_s)}\right|_{\mathcal{G}}\leq\frac{1}{p^{s+1}}.$$
	For this reason, $\frac{f_{\alpha}}{\textbf{F}_q(f_{\alpha})}$ is the limit of the rational functions $\frac{F_{s+1}}{\textbf{F}_q(F_s)}$. Therefore, $\frac{f_{\alpha}}{\textbf{F}_q(f_{\alpha})}\in E_{\mathbb{Z},q,p}$.  Now, note that 
	$$\frac{\textbf{F}_q(f_{\alpha})}{f_{\alpha}}-\frac{\textbf{F}_q(F_s)}{F_{s+1}}=\frac{\textbf{F}_q(f_{\alpha})F_{s+1}-f_{\alpha}\textbf{F}_q(F_s)}{f_{\alpha}F_{s+1}}.$$ 
	Since $f_{\alpha}F_{s+1}(0)=1$, $\frac{1}{f_{\alpha}F_{s+1}}\in\mathbb{Z}_{q,p}[[z]]$. As we have already seen $\textbf{F}_q(f_{\alpha})F_{s+1}-f_{\alpha}\textbf{F}_q(F_s)\in\mathfrak{m}^{s+1}\mathbb{Z}_{q,p}[[z]]$, thus $$\left|\frac{\textbf{F}_q(f_{\alpha})}{f_{\alpha}}-\frac{\textbf{F}_q(F_s)}{F_{s+1}}\right|_{\mathcal{G}}\leq\frac{1}{p^{s+1}}.$$
	So, $\frac{\textbf{F}_q(f_{\alpha})}{f_{\alpha}}$ is the limit of the rational functions $\frac{\textbf{F}_q(F_s)}{F_{s+1}}\in\mathbb{Z}_{q,p}[z]_S$ because $F_{s+1}(0)=1$. Then, $\frac{\textbf{F}_q(f_{\alpha})}{f_{\alpha}}\in E_{\mathbb{Z},q,p}.$ Therefore, $H$ is an invertible element of $E_{\mathbb{Z},q,p}$.
\end{proof}

\begin{rema}
	Let $\alpha=a/b$ be in $\mathbb{Q}\cap(0,1]$ and $p$ be a prime number congruent to 1 modulo $b$.  By Theorem~\ref{sfF}, the $q$-hypergoemertic operator $\mathcal{H}_{q,\alpha}=\sigma_q-\frac{1-z}{1-q^{\alpha}z}$  has a strong Frobenius structure for $p$ with period 1 and $H=\frac{f_{\alpha}}{\textbf{F}_q(f_{\alpha})}$ a  Frobenius transition matrix. Writing $\mathcal{H}_{q,\alpha}$ in function of $\delta_q$ we get the operator $$\mathcal{H}_{\delta_q,\alpha}=\delta_q+\frac{z(1-q^{\alpha})(q-1)^{-1}}{1-q^{\alpha}z}.$$  We are going to see that $\mathcal{H}_{\delta_q,\alpha}$ has a strong Frobenius structure.  First of all, note that $\mathcal{H}_{\delta_q,\alpha}$  belongs to $E_{\mathbb{Z}_{q,p}}[\delta_q]$ because, it follows from Lemma~\ref{beta} that, $(1-q^{\alpha})(q-1)^{-1}$ and $q^{\alpha}$ belong to $\mathbb{Z}_{q,p}$ and as $1-q^{\alpha}z\in S$, then $z(1-q^{\alpha})/((q-1)(1-q^{\alpha}z))\in\mathbb{Z}_{q,p}[z]_S.$\\
	By definition $\mathcal{H}_{\delta_q,\alpha}$ has a strong Frobenius structure if 
	$$\sigma_qX=(-(q-1)\frac{z(1-q^{\alpha})}{(q-1)(1-q^{\alpha}z)}+1)X$$ 
	has a strong Frobenius structure. But 
	$$-(q-1)\frac{z(1-q^{\alpha})}{(q-1)(1-q^{\alpha}z)}+1=\frac{1-z}{1-q^{\alpha}z}.$$ 
	Therefore, from Theorem~\ref{sfF} we deduce that  the operator $\mathcal{H}_{\delta_q,\alpha}$  has a strong Frobenius structure with $H=f_{\alpha}/ \textbf{F}_q(f_{\alpha})$ a Frobenius transition matrix. Then, according to Theorem~\ref{confluence3}, the differential operator $$\textbf{ev}(\mathcal{H}_{\delta_q,\alpha}):=\delta+\frac{z\textbf{ev}((1-q^{\alpha})(q-1)^{-1})}{1-\textbf{ev}(q^{\alpha})z}$$ has a strong Frobenius structure with $\textbf{ev}(H)$ a Frobenius transition matrix. Lemma~\ref{beta} implies that, $\textbf{ev}((1-q^{\alpha})(q-1)^{-1})=-\alpha$ and $\textbf{ev}(q^{\alpha})=1$. Then, $\textbf{ev}(\mathcal{H}_{\delta_q,\alpha})=\delta-\frac{\alpha z}{1-z}$. 
\end{rema}

\begin{coro}
	Let $\alpha=a/b$ be in $\mathbb{Q}\cap(0,1]$ and $p$ be a prime number congruent to 1 modulo $b$. We set $f_{\alpha}=\sum\limits_{n\geq0}\frac{(q^\alpha,q)_n}{(q,q)_n}z^n$.  Then,$$f_{\alpha}\equiv\left(\sum\limits_{n=0}^{p-1}\frac{(q^{\alpha},q)_n}{(q,q)_n}z^n\right)\textbf{F}_q(f_{\alpha})\bmod\phi_p(q)\mathbb{Z}_{q,p}[[z]].$$
\end{coro}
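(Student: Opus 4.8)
The plan is to reduce the asserted congruence to a coefficient‑by‑coefficient statement and then feed it into the factorisation already established in Lemma~\ref{f}(iii). Write $c_n=\frac{(q^{\alpha},q)_n}{(q,q)_n}$, so that $f_{\alpha}=\sum_{n\ge0}c_nz^n$ and $\textbf{F}_q(f_{\alpha})=\sum_{m\ge0}\textbf{F}_q(c_m)z^{mp}$. Comparing the coefficient of $z^n$ on the two sides of the claimed congruence and writing $n=n_1p+r_0$ with $n_1=\lfloor n/p\rfloor$, $0\le r_0\le p-1$, the corollary is equivalent to the family of congruences $c_n\equiv c_{r_0}\,\textbf{F}_q(c_{n_1})\bmod\phi_p(q)\mathbb{Z}_{q,p}$, $n\ge0$. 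Now Equation~\eqref{reduction} reads $c_n=D_nG_n\,\textbf{F}_q(c_{n_1})$, where $D_n$ and $G_n$ denote, respectively, the first and the second of the two explicit factors in~\eqref{reduction}; both lie in $\mathbb{Z}_{q,p}$ (by Lemma~\ref{f}(i), respectively Lemma~\ref{beta}). Hence it suffices to prove $D_n\equiv1$ and $G_n\equiv c_{r_0}$ modulo $\phi_p(q)\mathbb{Z}_{q,p}$.

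The engine for both is the congruence $q^{\alpha}\equiv q^{s_0}\bmod(1-q^p)\mathbb{Z}_{q,p}$, where $s_0\in\{1,\dots,p-1\}$ is the first $p$-adic digit of $\alpha$ (it is nonzero because $1\le a<p$, so $p\nmid a$). This follows from the expansion~\eqref{qb} together with Lemma~\ref{norme}: each term beyond the first in~\eqref{qb} is a multiple of some $1-q^{p^{j}}$ with $j\ge1$, hence of $1-q^p=(1-q)\phi_p(q)$. Combined with the elementary fact that $q^{\alpha+k}-1\in(1-q)\mathbb{Z}_{q,p}$ for every integer $k\ge0$ (Lemma~\ref{beta}(i) with $t=1$), this yields the needed elementary congruences: for integers $a,a',t,t'$ with $a\equiv a'$, $t\equiv t'\pmod p$ and $1\le t,t'\le p-1$, the two ratios $\frac{1-q^{\alpha+a}}{1-q^{t}}$ and $\frac{1-q^{\alpha+a'}}{1-q^{t'}}$ (each in $\mathbb{Z}_{q,p}$ by Lemma~\ref{beta}) are congruent modulo $\phi_p(q)\mathbb{Z}_{q,p}$. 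Granting this, $G_n=\prod_{i=0}^{r_0-1}\frac{1-q^{\alpha+n_1p+i}}{1-q^{n_1p+i+1}}\equiv\prod_{i=0}^{r_0-1}\frac{1-q^{\alpha+i}}{1-q^{i+1}}=c_{r_0}$ modulo $\phi_p(q)\mathbb{Z}_{q,p}$. For $D_n$, note that the integer $j$ of Lemma~\ref{f} equals $(p-1)\alpha$, so that $s_0+j\equiv0\pmod p$ and $i\mapsto(s_0+i)\bmod p$ is a bijection $\{0,\dots,p-1\}\setminus\{j\}\to\{1,\dots,p-1\}$; using it to pair the numerator factor $1-q^{\alpha+kp+i}$ with the denominator factor $1-q^{kp+((s_0+i)\bmod p)}$ inside the $k$-th block of $D_n$, each such ratio is $\equiv1\bmod\phi_p(q)\mathbb{Z}_{q,p}$, so every block, and therefore $D_n$, is $\equiv1$.

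The hard part is precisely the handling of these quotients: one cannot simply "put $q^p=1$", since $1-q^p\equiv0\bmod\phi_p(q)$ and the ring $\mathbb{Z}_{q,p}/\phi_p(q)\mathbb{Z}_{q,p}$ has zero‑divisors, so naive cancellation is illegitimate. What makes it work is that, in each of the differences involved, once everything is put over a common denominator the numerator acquires a factor $q^{n_1p}-1$ (or $q^{kp}-1$), which carries both a $1-q$ and a $\phi_p(q)$ by Lemma~\ref{norme}, while the complementary numerator, of the shape $q^{i+1}-q^{\alpha+i}$ (or $q^{i'}-q^{\alpha+i}$), carries one more $1-q$; these two factors $1-q$ cancel the $(1-q)^2$ extracted from the denominators, and what remains of the denominator is a geometric sum $1+q+\cdots+q^{N-1}$ with $p\nmid N$, which is a unit of the local ring $\mathbb{Z}_{q,p}$. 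The difference is therefore $\phi_p(q)$ times an element of $\mathbb{Z}_{q,p}$. Finally, summing the coefficient‑wise congruences over $n$ yields the stated congruence in $\mathbb{Z}_{q,p}[[z]]$.
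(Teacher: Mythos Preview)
Your argument is correct; the only slip is cosmetic. In stating the ``elementary congruence'' you restrict to $1\le t,t'\le p-1$, but then immediately apply it with $t=n_1p+i+1$. The condition actually needed (and the one your last paragraph justifies) is $p\nmid t,t'$, so that $[t]_q$ and $[t']_q$ are units in $\mathbb{Z}_{q,p}$; with that reading, the verification that $D_n\equiv1$ and $G_n\equiv c_{r_0}$ modulo $\phi_p(q)$ goes through exactly as you sketch.

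Your route is more direct than the paper's. The paper first invokes the full strong Frobenius structure of $\mathcal{H}_{q,\alpha}$ (Theorem~\ref{sfF}, together with Propositions~\ref{actionfrob3} and~\ref{dimensionker3}) to obtain the \emph{exact} identity $f_\alpha=H\,\textbf{F}_q(f_\alpha)$ with $H=f_\alpha/\textbf{F}_q(f_\alpha)\in E_{\mathbb{Z},q,p}$, and only afterwards reduces $H$ modulo $\phi_p(q)$ by quoting Equation~\eqref{eqn:produit} from the proof of Lemma~\ref{elementanalytique}. Your coefficient-wise congruence $c_n\equiv c_{r_0}\textbf{F}_q(c_{n_1})$ is precisely the $s=0$ instance of~\eqref{eqn:produit}, and your pairing $i\mapsto(s_0+i)\bmod p$ is the explicit form of the bijection $\tau$ the paper builds there; so the computational core is the same. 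What you gain is that the corollary becomes an elementary consequence of Lemmas~\ref{f} and~\ref{beta} alone, independent of the Frobenius-structure machinery; what the paper's arrangement gains is that the corollary is exhibited as an instance of the general mechanism of Theorem~\ref{alg3}.
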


\begin{proof}
	We know that $f_{\alpha}$ is a solution of the operator $\mathcal{ H}_{q,\alpha}$ and Lemma~\ref{f} implies that $f_{\alpha}$ belongs to $\mathbb{Z}_{q,p}[[z]]$.  By Proposition~\ref{dimensionker3}, $dim_{Frac(\mathbb{Z}_{q,p})}(Ker(\mathcal{ H}_{q,\alpha},Frac(\mathbb{Z}_{q,p})[[z]]))\leq1$.  Since $f_{\alpha}\in Ker(\mathcal{ H}_{q,\alpha},Frac(\mathbb{Z}_{q,p})[[z]])$ is not zero, $dim_{Frac(\mathbb{Z}_{q,p})}Ker(L_q,Frac(\mathbb{Z}_{q,p})[[z]])=1$. According to Theorem~\ref{sfF}, the $q$-hypergeometric operator $\mathcal{H}_{q,\alpha}$ has a strong Frobenius structure for $p$ with period 1, then it follows from Proposition~\ref{actionfrob3} that $H\textbf{F}_q(f_{\alpha})$ is a solution of $\mathcal{H}_{q,\alpha}$.  So, $H\textbf{F}_q(f_{\alpha})$ and $f_{\alpha}$ belong to $Ker(L_q,Frac(\mathbb{Z}_{q,p})[[z]])$. \\
As $dim_{Frac(\mathbb{Z}_{q,p})}Ker(L_q,Frac(\mathbb{Z}_{q,p})[[z]])=1$, then there is $c\in Frac(\mathbb{Z}_{q,p})$ such that $cf_{\alpha}=H\textbf{F}_q(f_{\alpha})$. But, $c=1$ because $H(0)=\textbf{F}_q(f_{\alpha})(0) =f_{\alpha}(0)=1$. Thus, $f_{\alpha}=H\textbf{F}_q(f_{\alpha})$ and therefore,
	\begin{equation}\label{reductioncyclo}
	f_{\alpha}\equiv H\textbf{F}_q(f_{\alpha})\bmod\phi_p(q)\mathbb{Z}_{q,p}[[z]].
	\end{equation}
	  We are going to show that $H-F_1\in\phi_p(q)\mathbb{Z}_{q,p}[[z]]$, where $F_1=\sum\limits_{n=0}^{p-1}\frac{(q^{\alpha},q)_n}{(q,q)_n}z^n$.  To simplify the writing, we set $B^{(-1)}(n)=\frac{(q^{\alpha},q)_n}{(q,q)_n}$ and $B^{(0)}(n)=\textbf{F}_q(B^{(-1)}(n))$. So, $$H-F_1=\frac{\sum\limits_{n\geq0}\left(\sum\limits_{r=0}^{p-1}\left(B^{(-1)}(np+r)-B^{(0)}(n)B^{(-1)}(r)\right)z^{np+r}\right)}{\textbf{F}_q(f_{\alpha})}.$$
	As $\textbf{F}_q(f_{\alpha})(0)=1$, in order to prove that $H-F_1\in\phi_p(q)\mathbb{Z}_{q,p}[[z]]$ it suffices to see that $$\sum\limits_{n\geq0}\left(\sum\limits_{r=0}^{p-1}\left(B^{(-1)}(np+r)-B^{(0)}(n)B^{(-1)}(r)\right)z^{np+r}\right)\in\phi_p(q)\mathbb{Z}_{q,p}[[z]].$$
	Let $n$ be a non-negative integer and $r$ be in $\{0,1,\ldots,p-1\}$. Then, we have
	$$\left(B^{(-1)}(np+r)-B^{(0)}(n)B^{(-1)}(r)\right)=B^{(0)}(n)\left(\frac{B^{(-1)}(np+r)}{B^{(0)}(n)}-B^{(-1)}(r)\right).$$
	From Equation~\eqref{eqn:produit}, we conclude that $$\frac{B^{(-1)}(np+r)}{B^{(0)}(n)B^{(-1)}(r)}\equiv 1\bmod\phi_p(q)\mathbb{Z}_{q,p}.$$
	Then, $$\left(B^{(-1)}(np+r)-B^{(0)}(n)B^{(-1)}(r)\right)\equiv0\bmod\phi_p(q)\mathbb{Z}_{q,p}.$$
	Consequently, $$H-F_1\in\phi_p(q)\mathbb{Z}_{q,p}[[z]].$$
	For this reason $H\equiv F_1\bmod\phi_p(q)\mathbb{Z}_{q,p}[[z]]$. By definition $F_1=\sum_{n=0}^{p-1}\left(\frac{(q^{\alpha},q)_n}{(q,q)_n}\right)z^n$. Then, it follows from Equation~\eqref{reductioncyclo} that, $$f_{\alpha}\equiv\left(\sum\limits_{n=0}^{p-1}\frac{(q^{\alpha},q)_n}{(q,q)_n}z^n\right)\textbf{F}_q(f_{\alpha})\bmod\phi_p(q)\mathbb{Z}_{q,p}[[z]].$$ 
\end{proof}

Recall that Theorem~\ref{sfF} has been proved by using Lemma~\ref{elementanalytique}. The rest of this section is devoted to prove Lemma~\ref{elementanalytique}. The proof of Lemma~\ref{elementanalytique} rests on Lemma~\ref{dwork} which is stated below. 

\begin{lemm}\label{dwork}
	Let $\{B^{(i)}(k)\}_{k\in\mathbb{N},i\geq-1}$ be a set of sequences with coefficients in $\mathbb{Z}_{q,p}$. We set $$F(z)=\sum_{k\geq0}B^{(-1)}(k)z^k,\quad G(z)=\sum_{k\geq0}B^{(0)}(k)z^k.$$ 
	
	Suppose that the following conditions are satisfied.
	\begin{enumerate}
		\item For every $n\in\mathbb{N}$ and for every integer $i\geq-1$, $\frac{B^{(i)}(n)}{B^{(i+1)}(\lfloor{n/p}\rfloor)}\in\mathbb{Z}_{q,p}$.
		\item For every $(m,n,s)\in\mathbb{N}^3$ and for every integer $i\geq-1$, $$\frac{B^{(i)}(n+mp^{s+1})}{B^{(i+1)}(\lfloor{n/p}\rfloor+mp^s)}\equiv\frac{B^{(i)}(n)}{B^{(i+1)}(\lfloor{n/p}\rfloor)}\mod\mathfrak{m}^{s+1}.$$
		\item For every $n\in\mathbb{N}$, $B^{(i)}(n)\in\mathbb{Z}_{q,p}$
		\item For every integer $i\geq-1$ $B^{(i)}(0)\in\mathbb{Z}^*_{q,p}$.
	\end{enumerate}
	
	\smallskip
	
	Then for every $(r,s)\in\mathbb{N}^2$, $$F(z)\sum_{k=rp^s}^{(r+1)p^s-1}B^{(0)}(k)z^{pk}\equiv G(z^p)\sum_{k=rp^{s+1}}^{(r+1)p^{s+1}-1}B^{(-1)}(k)z^k\bmod B^{(s)}(r)\mathfrak{m}^{s+1}$$
\end{lemm}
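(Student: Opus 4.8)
\textit{Proof strategy.} Write the statement additively: set
\[
D_s(r)\ :=\ F(z)\sum_{k=rp^s}^{(r+1)p^s-1}B^{(0)}(k)z^{pk}\ -\ G(z^p)\sum_{k=rp^{s+1}}^{(r+1)p^{s+1}-1}B^{(-1)}(k)z^k ,
\]
so that the claim is exactly $D_s(r)\in B^{(s)}(r)\,\mathfrak{m}^{s+1}\mathbb{Z}_{q,p}[[z]]$ for all $r,s\ge 0$, and I would prove this by induction on $s$. The first thing to record is \emph{block additivity}: the interval $[rp^{s+1},(r+1)p^{s+1})$ is the disjoint union of the intervals $[jp^{s+1},(j+1)p^{s+1})$ for $j=rp,\dots,(r+1)p-1$, and similarly with exponent $s$, so $D_{s+1}(r)=\sum_{j=rp}^{(r+1)p-1}D_s(j)$; this is what makes the induction step substantive. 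I would also use condition~(1) iterated, in the form $B^{(i)}(k)=B^{(i+\ell)}(\lfloor k/p^\ell\rfloor)\cdot u$ with $u\in\mathbb{Z}_{q,p}$, to pull a factor $B^{(s)}(r)$ out of $D_s(r)$, and I would note once that $\mathbb{Z}_{q,p}$ is a domain (it is the completion of the regular local ring $\mathbb{Z}[q]_{\mathfrak{m}}$), so that $\mathbb{Z}_{q,p}[[z]]$ is a domain and the divisions below are unambiguous.

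For the base case $s=0$ I would factor $D_0(k)=B^{(0)}(k)z^{pk}V_k(z)$, where $V_k(z):=F(z)-G(z^p)Q_k(z)$ and $Q_k(z):=\sum_{t=0}^{p-1}\frac{B^{(-1)}(kp+t)}{B^{(0)}(k)}z^t\in\mathbb{Z}_{q,p}[z]$ by condition~(1). The claim reduces to $V_k\in\mathfrak{m}\mathbb{Z}_{q,p}[[z]]$, i.e.\ to the identity $\overline{F}=\overline{G(z^p)}\cdot\overline{Q_k}$ in $\mathbb{F}_p[[z]]$ (the residue ring of $\mathbb{Z}_{q,p}$ is $\mathbb{F}_p$, and $\mathfrak{m}\mathbb{Z}_{q,p}[[z]]$ is the kernel of coefficientwise reduction). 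Condition~(2), applied with $i=-1$ and with its integer parameter equal to $0$, says precisely that $\overline{Q_k}$ is independent of $k$; calling it $\overline{Q}$, the coefficient of $z^n$ in $\overline{G(z^p)}\,\overline{Q}$ (write $n=mp+t$ with $0\le t<p$ and use $\overline{Q}=\overline{Q_m}$) is $\overline{B^{(0)}(m)}\cdot\overline{B^{(-1)}(n)/B^{(0)}(m)}=\overline{B^{(-1)}(n)}$, which is the coefficient of $z^n$ in $\overline{F}$. So $s=0$ holds. The same computation with parameter $L$ in place of $0$ shows, in addition, that $V_k\bmod\mathfrak{m}^{L+1}$ depends only on $k\bmod p^{L}$, a fact the step will use.

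For the inductive step I would write $D_{s+1}(r)=\sum_{t=0}^{p-1}D_s(rp+t)$ and, using $B^{(s)}(rp+t)=B^{(s+1)}(r)\cdot(\text{element of }\mathbb{Z}_{q,p})$ together with the inductive hypothesis, observe that each $\eta_t:=D_s(rp+t)/B^{(s+1)}(r)$ lies in $\mathfrak{m}^{s+1}\mathbb{Z}_{q,p}[[z]]$. The decisive elementary fact is then that a sum of $p$ elements of $\mathfrak{m}^{s+1}$ that are pairwise congruent modulo $\mathfrak{m}^{s+2}$ lies in $\mathfrak{m}^{s+2}$ — it equals $p$ times one of them (and $p\in\mathfrak{m}$, so $p\cdot\mathfrak{m}^{s+1}\subseteq\mathfrak{m}^{s+2}$) plus $p-1$ terms already in $\mathfrak{m}^{s+2}$. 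Hence $D_{s+1}(r)\in B^{(s+1)}(r)\mathfrak{m}^{s+2}\mathbb{Z}_{q,p}[[z]]$, \emph{provided} one knows $\eta_0\equiv\eta_1\equiv\dots\equiv\eta_{p-1}\pmod{\mathfrak{m}^{s+2}}$. Unwinding this, one must compare, for indices $k$ and $k'$ in the same block, the ratios $B^{(0)}(k)/B^{(s)}(\lfloor k/p^s\rfloor)$ with $B^{(0)}(k')/B^{(s)}(\lfloor k'/p^s\rfloor)$ and the coefficients $V_k$ with $V_{k'}$; condition~(2) is calibrated exactly so that these agree modulo the power of $\mathfrak{m}$ dictated by the $p$-adic distance between $k$ and $k'$.

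The hard part, and where I expect the real work to be, is that the bare inductive hypothesis ``$D_s(r)\in B^{(s)}(r)\mathfrak{m}^{s+1}\mathbb{Z}_{q,p}[[z]]$'' is by itself too weak to furnish the congruences $\eta_t\equiv\eta_{t'}\pmod{\mathfrak{m}^{s+2}}$ needed to run the step: for $s\ge 1$ the quantity $D_s(r)$ carries far more cancellation than its mere membership in that ideal records. The correct device — the standard one in Dwork–congruence arguments — is to prove jointly by induction on $s$, alongside the asserted membership, a family of finer ``local constancy'' statements asserting that the normalized partial sum $D_s(r)/B^{(s+\ell)}(\lfloor r/p^\ell\rfloor)$ varies $\mathfrak{m}$-adically slowly as $r$ moves within a fixed level‑$\ell$ block (with the rate improving as $\ell$ grows), the case $\ell=1$ being what feeds the step above and the case $\ell+1$ at stage $s$ being what propagates the case $\ell$ to stage $s+1$; condition~(2), in its various instances, is precisely what supplies these. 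Setting this hierarchy up correctly — and, in particular, absorbing the spurious monomial factors $z^{\,mp^{s+1}}$ that appear when the inner index ranges are translated, which are harmless only because the relevant quantities already lie deep in $\mathfrak{m}$ — is the delicate bookkeeping at the heart of the proof; once it is in place the induction closes, and since each stage includes the membership $D_s(r)\in B^{(s)}(r)\mathfrak{m}^{s+1}\mathbb{Z}_{q,p}[[z]]$ itself, that is the statement of the lemma for all $(r,s)$.
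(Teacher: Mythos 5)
The paper itself does not prove this lemma: immediately after it, the text records that the statement is Dwork's Theorem~2 from \emph{p-adic cycles} transposed to the present ring $\mathbb{Z}_{q,p}$ and that ``the arguments used by Dwork in his proof remain valid in our context,'' and the proof is omitted. So there is no in-paper argument to compare you against; I can only weigh your sketch against the lemma itself.

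Several pieces of your outline are correct and would appear in a full proof: the block additivity $D_{s+1}(r)=\sum_{t=0}^{p-1}D_s(rp+t)$; the factorization $D_0(k)=B^{(0)}(k)z^{pk}V_k(z)$ with $V_k=F-G(z^p)Q_k$, the reduction of the base case to the identity $\overline{F}=\overline{G(z^p)}\cdot\overline{Q}$ in $\mathbb{F}_p[[z]]$, and the use of condition~(2) with $s=0$ to make $\overline{Q_k}$ independent of $k$; and the observation that the bare membership $D_s(r)\in B^{(s)}(r)\mathfrak{m}^{s+1}$ is too weak to run the step. But you then reduce the step to the congruences $\eta_0\equiv\cdots\equiv\eta_{p-1}\pmod{\mathfrak{m}^{s+2}}$ for $\eta_t:=D_s(rp+t)/B^{(s+1)}(r)$ and never derive them. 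You acknowledge this, and propose to fix it by a strengthened induction carrying a ``family of finer local constancy statements'' about $D_s(r)/B^{(s+\ell)}(\lfloor r/p^\ell\rfloor)$; those statements are never formulated precisely, never verified at the base of the induction, and never shown to be what condition~(2) actually delivers. That is essentially the entire content of the lemma beyond $s=0$, so as written this is a plan for a proof rather than a proof. One concrete sign that the remaining work is not just routine bookkeeping: $\eta_t$ is supported in degrees $\ge(rp+t)p^{s+1}$, so for $t<t'$ the asserted congruence $\eta_t\equiv\eta_{t'}\pmod{\mathfrak{m}^{s+2}}$ forces the coefficients of $\eta_t$ in degrees below $(rp+t')p^{s+1}$ to lie in $\mathfrak{m}^{s+2}$, one power deeper than what the inductive hypothesis gives for $\eta_t$; this has to come out of condition~(2), and the parenthetical that the ``spurious monomial factors $z^{mp^{s+1}}$'' are ``harmless because the relevant quantities already lie deep in $\mathfrak{m}$'' does not supply it, since ``deep'' there means $\mathfrak{m}^{s+1}$ and what is needed is $\mathfrak{m}^{s+2}$.
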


 Lemma~\ref{dwork} is essentially a result due to Dwork \cite[Theorem 2]{padiccycles} that we have stated in our context.   We omit the proof of this lemma since the arguments used by Dwork in his proof  remain valid in our context.

\begin{proof}[Proof of Lemma~\ref{elementanalytique}]
	
	Let $\alpha=a/b$ be in $\mathbb{Q}\cap(0,1]$ and $p$ be a prime number congruent to 1 modulo $b$.  We are going to show that the set $\{B^{(i)}(k)\}_{k\in\mathbb{N},i\geq-1}$, with $B^{(-1)}(n)=\frac{(q^{\alpha},q)_n}{(q,q)_n}$ and for every $i\geq0$, $B^{(i)}(n)=\textbf{F}^{i+1}_q(B^{(-1)}(n))$ satisfies the conditions (i)-(iv) from Lemma~\ref{dwork}. 
	
	By Lemma~\ref{f}, we see that for every integer $n\geq0$, $B^{(-1)}(n)\in\mathbb{Z}_{q,p}$ and since $\textbf{F}_q$ is an endomorphism of  $\mathbb{Z}_{q,p}$, it follows that, for every  $i,n\geq0$, $B^{(i)}(n)\in\mathbb{Z}_{q,p}$. Therefore, the condition (iii) is satisfied. By definition, for every $i\geq-1$, $B^{(i)}(0)=1$. So, the condition (iv) is also satisfied. In the remainder we  show the conditions (i) and (ii) are also satisfied. Let $j$ be the unique integer in $\{1\ldots,p-1\}$ such that $\alpha+j=p\alpha$.
	
	\smallskip
	
	\emph{Proof of (i)}.  Let $n$ be in $\mathbb{N}$ and $i$ be in $\mathbb{Z}_{\geq-1}$. We want to show that $\frac{B^{(i)}(n)}{B^{(i+1)}(\lfloor{n/p}\rfloor)}\in\mathbb{Z}_{q,p}$.  For this, we are going to show by induction on $i\geq-1$ that, for every $n\geq0$, $\frac{B^{(i)}(n)}{B^{(i+1)}(\lfloor{n/p}\rfloor))}\in\mathbb{Z}_{q,p}$.  The point 3 of Lemma~\ref{f} implies that, for every integer $n\geq0$, $\frac{B^{(-1)}(n)}{B^{(0)}(\lfloor{n/p}\rfloor)}\in\mathbb{Z}_{q,p}$.  Now, let $i$ be in $ \mathbb{Z}_{\geq-1}$ and suppose that, for every non-negative integer $n$, $\frac{B^{(i)}(n)}{B^{(i+1)}(\lfloor{n/p}\rfloor)}\in\mathbb{Z}_{q,p}$. Then, there is $\beta_{i+1}\in\mathbb{Z}_{q,p}$ such that $$B^{(i)}(n)=\beta_{i+1}B^{(i+1)}(\lfloor{n/p}\rfloor).$$
	As $\textbf{F}_q:\mathbb{Z}_{q,p}\rightarrow\mathbb{Z}_{q,p}$ is an endomorphism of rings, then 
	
	$$\textbf{F}_q(B^{(i)}(n))=\textbf{F}_q(\beta_{i+1})\textbf{F}_q(B^{i+1}(\lfloor{n/p}\rfloor)).$$
	By definition $B^{(i+1)}(n)=\textbf{F}_q(B^{(i)}(n))$ and $B^{(i+2)}(\lfloor{n/p}\rfloor)=\textbf{F}_q(B^{(i+1)}(\lfloor{n/p}\rfloor))$. Therefore, for every non-negative integer $n$, $\frac{B^{(i+1)}(n)}{B^({i+2})(\lfloor{n/p}\rfloor)}\in\mathbb{Z}_{q,p}$. 
	
So, we conclude by induction on $i\geq-1$ that, for every non-negative integer $n$, $$\frac{B^{(i)}(n)}{B^{(i+1)}(\lfloor{n/p}\rfloor)}\in\mathbb{Z}_{q,p}.$$
	
	\emph{Proof of (ii)}.  Let $(n,m,s)$ be in $\mathbb{N}^3$ and $i$ be in $\mathbb{Z}_{\geq-1}$.  We want to show that $$\frac{B^{(i)}(n+mp^{s+1})}{B^{(i+1)}(\lfloor{n/p}\rfloor+mp^s)}\equiv\frac{B^{(i)}(n)}{B^{(i+1)}(\lfloor{n/p}\rfloor)}\mod\mathfrak{m}^{s+1}.$$
	First of all, we are going to show that this last equality  is true for $(n,m,s)\in\mathbb{N}^3$ and $i=-1$.  That is, we have to prove that for every $(n,m,s)\in\mathbb{N}^3$,
	\begin{equation}\label{pasobase}
	\frac{B^{(-1)}(n+mp^{s+1})}{B^{(0)}(\lfloor{n/p}\rfloor+mp^{s})}\equiv\frac{B^{(-1)}(n)}{B^{(0)}(\lfloor{n/p}\rfloor)}\mod\mathfrak{m}^{s+1}.
	\end{equation}
	It is clear that Equation~\eqref{pasobase} is verified for $m=0$ and $(s,n)\in\mathbb{N}^2$. Then,  we are going to show that \eqref{pasobase} is true for every $(n,s)\in\mathbb{N}^2$ and for every integer $m>0$. Note that
	
	$$\frac{B^{(-1)}(n+mp^{s+1}))}{B^{(0)}(\lfloor{n/p}\rfloor+mp^{s})}-\frac{B^{(-1)}(n)}{B^{(0)}(\lfloor{n/p}\rfloor)}=\frac{B^{(-1)}(n)}{B^{(0)}(\lfloor{n/p}\rfloor)}\left[\frac{B^{(-1)}(n+mp^{s+1}))B^{(0)}(\lfloor{n/p}\rfloor)}{B^{(0)}(\lfloor{n/p}\rfloor+mp^{s}))B^{(-1)}(n)}-1\right].$$
	From (i) we know that $\frac{B^{(-1)}(n)}{B^{(0)}(\lfloor{n/p}\rfloor)}\in\mathbb{Z}_{q,p}$. Then, in order to prove\eqref{pasobase} it suffices to prove that
	\begin{equation}\label{reduction2}
	\frac{B^{(-1)}(n+mp^{s+1})B^{(0)}(\lfloor{n/p}\rfloor)}{B^{(0)}(\lfloor{n/p}\rfloor+mp^s)B^{(-1)}(n)}-1\in\mathfrak{m}^{s+1}.
	\end{equation}
	
	We are going to see that \eqref{reduction2} is satisfied for every $(n,s)\in\mathbb{N}^2$ and for every integer $m>0$. Indeed, for $(n,s)\in\mathbb{N}^2$ we have 
	$$n=a_{s}p^s+a_{s-1}p^{s-1}+\cdots+a_1p+a_0,$$ 
	where $a_s\in\mathbb{N}$ and $a_{s-1},\ldots, a_1,a_0\in\{0,\ldots, p-1\}$. 
	We set $$l=n+mp^{s+1}\quad\textup{and}\quad l_1=\lfloor{\frac{n+mp^{s+1}}{p}}\rfloor=\lfloor{n/p}\rfloor+mp^{s}.$$	
	Recall that by definition $B^{(0)}(\lfloor{n/p}\rfloor+mp^{s})= \textbf{F}_q(B^{(-1)}(\lfloor{n/p}\rfloor+mp^{s}))$. Then, it follows from \eqref{reduction} that
	\begin{equation}\label{4}
	\frac{B^{(-1)}(n+mp^{s+1})}{B^{(0)}(\lfloor{n/p}\rfloor+mp^{s})}=\frac{\prod\limits_{k=0}^{l_1-1}\prod\limits_{i=0,i\neq j}^{p-1}(1-q^{\alpha+kp+i})}{\prod\limits_{k=0}^{l_1-1}\prod\limits_{i=1}^{p-1}(1-q^{kp+i})}\cdot
	\frac{(1-q^{\alpha+l_1p})\cdots(1-q^{\alpha+l_1p+a_0-1})}{(1-q^{l_1p+1})\cdots(1-q^{l_1p+a_0})}.
	\end{equation}
	We set $n_1=\lfloor{n/p}\rfloor$. Again, remember that by definition $B^{(0)}(\lfloor{n/p}\rfloor)= \textbf{F}_q(B^{(-1)}(\lfloor{n/p}\rfloor))$. So according to \eqref{reduction} we have 
	\begin{equation}\label{5}
	\frac{B^{(-1)}(n)}{B^{(0)}(\lfloor{n/p}\rfloor))}=\frac{\prod\limits_{k=0}^{n_1-1}\prod\limits_{i=0,i\neq j}^{p-1}(1-q^{\alpha+kp+i})}{\prod\limits_{k=0}^{n_1-1}\prod\limits_{i=1}^{p-1}(1-q^{kp+i})}
	\frac{(1-q^{\alpha+n_1p})\cdots(1-q^{\alpha+n_1p+a_0-1})}{(1-q^{n_1p+1})\cdots(1-q^{n_1p+a_0})}.
	\end{equation}	
	As $m>0$ then  $n_1< l_1$. So,  we obtain from \eqref{4} and from \eqref{5} that
	\begin{align*}
	&\frac{B^{(-1)}(n+mp^{s+1})B^{(0)}(\lfloor{n/p}\rfloor)}{B^{(0)}(\lfloor{n/p}\rfloor+mp^s)B^{(-1)}(n)}=\\
	&\frac{\prod\limits_{k=n_1+1}^{l_1-1}\prod\limits_{i=0,i\neq j}^{p-1}(1-q^{\alpha+kp+i})}{\prod\limits_{k=n_1+1}^{l_1-1}\prod\limits_{i=1}^{p-1}(1-q^{kp+i})}\frac{\prod\limits_{i=a_0,i\neq j}^{p-1}(1-q^{\alpha+n_1p+i})}{\prod\limits_{i=a_0+1}^{p-1}(1-q^{n_1p+i})}\frac{(1-q^{\alpha+l_1p})\cdots(1-q^{\alpha+l_1p+a_0-1})}{(1-q^{l_1p+1})\cdots(1-q^{l_1p+a_0})}.
	\end{align*}	
	Now, we consider the following sets:
	\begin{multline*}
	\mathcal{B}':=\{kp+i:\text{ }n_1+1\leq k\leq l_1-1,\text{ }1\leq i\leq p-1\}\\\cup\{n_1p+i: a_0+1\leq i\leq p-1\}\cup\{l_1p+i:\text{ }1\leq i\leq a_0\},
	\end{multline*}
	\begin{multline*}
	\mathcal{A'}:=\{\alpha+kp+i:\text{ }n_1+1\leq k\leq l_1-1,\text{ }0\leq i\leq p-1,\text{ }i\neq j\}\\\cup\{\alpha+n_1p+i: a_0\leq i\leq p-1,\text{ }i\neq j\}\cup\{\alpha+l_1p+i:\text{ }0\leq i\leq a_0-1\}.
	\end{multline*}
	Remember that $j$ is the unique integer in $\{1,\ldots, p-1\}$ such that $\alpha+j=p\alpha$.
	
	Then,
	\begin{equation}\label{egal}
	\frac{B^{(-1)}(n+mp^{s+1})B^{(0)}(\lfloor{n/p}\rfloor)}{B^{(0)}(\lfloor{n/p}\rfloor+mp^s)B^{(-1)}(n)}=\prod_{r\in\mathcal{A}'}(1-q^r)\prod_{t\in\mathcal{B}'}(1-q^t)^{-1}.
	\end{equation}
	To prove Equation~\eqref{reduction2} is equivalent to show that \begin{equation*}
	\prod_{r\in\mathcal{A}'}(1-q^r)\prod_{t\in\mathcal{B}'}(1-q^t)^{-1}-1\in\mathfrak{m}^{s+1}.
	\end{equation*}
	It is clear that if $r\in\mathcal{A}'$ and $t\in\mathcal{B}'$ then $r,t\notin p\mathbb{Z}_p$.  Now, we consider the following sets:
	$$\mathcal{B}:=\{n_1p+a_0+1,n_1p+a_0+2,\ldots, l_1p+a_0\},$$ $$\mathcal{A}:=\{\alpha+n_1p+a_0,\alpha+n_1p+a_0+1,\ldots,\alpha+l_1p+a_0-1\}.$$
	Note that $\mathcal{A}'\subset\mathcal{A}$ and $\mathcal{B}'\subset\mathcal{B}$. Moreover, $\mathcal{A}$ and $\mathcal{B}$ have the same cardinal which is equal to $mp^{s+1}$.  Since $\alpha \in\mathbb{Z}_p$, we have

	\begin{equation}\label{modulo}
	\mathcal{A}\mod mp^{s+1}=\mathbb{Z}/(mp^{s+1}\mathbb{Z})=\mathcal{B}\mod mp^{s+1}.
	\end{equation}
	Let $r$ be in $\mathcal{A}'$. On the one hand, it follows from~\eqref{modulo} that there exists a unique $\tau(r)\in\mathcal{B}$ such that $r-\tau(r)\in mp^{s+1}\mathbb{Z}_p$ and the other hand, as $r\notin p\mathbb{Z}_p$ then $\tau(r)\notin p\mathbb{Z}_p$. Therefore, $\tau(t)\in\mathcal{B}'$. We are going to see that $$\frac{1-q^{r}}{1-q^{\tau(r)}}\equiv 1\mod\phi_p(q)\cdots\phi_p(q^{p^{s}})\mathbb{Z}_{q,p}.$$  As $r-\tau(r)\in mp^{s+1}\mathbb{Z}_p$, then there is $\mu\in\mathbb{Z}_p$ such that $r-\tau(r)=mp^{s+1}\mu$ and consequently we have
	\begin{equation}\label{ab}
	1-q^{r}-(1-q^{\tau(r)})=q^{\tau(r)}-q^r=q^r(q^{r-\tau(r)}-1)=q^{r}(q^{mp^{s+1}\mu}-1).
	\end{equation}
	First of all, we prove that $q^{mp^{s+1}\mu}-1\in\mathfrak{m}^{s+1}\mathbb{Z}_{q,p}$.  Let $k$ be the $p$-adic valuation of $mp^{s+1}\mu$. Then $k\geq s+1$ and $$mp^{s+1}\mu=\sum_{n\geq k}s_np^n,$$
	where $s_k\in\{1,\ldots, p-1\}$ and $s_n\in\{0,1,\ldots,p-1\}$ for $n>k$. For every $n\geq k$, we set $r_n=~\sum_{j=k}^ns_np^n$. Therefore, $$q^{mp^{s+1}\mu}=q^{r_k}+q^{r_k}(q^{s_{k+1}p^{k+1}}-1)+q^{r_{k+1}}(q^{s_{k+2}}p^{k+2}-1)+\cdots+q^{r_{n-1}}(q^{s_{n}p^{n}}-1)+\cdots.$$
	Then, $$q^{mp^{s+1}\mu}-1=q^{s_kp^k}-1+q^{r_k}(q^{s_{r+1}p^{k+1}}-1)+q^{r_{k+1}}(q^{s_{r+2}}p^{k+2}-1)+\cdots+q^{r_{n-1}}(q^{s_{n}p^{n}}-1)+\cdots.$$
	From Lemma~\ref{norme} we have
	\begin{equation}\label{r}
	1-q^{p^k}=(1-q)\phi_p(q)\cdots\phi_p(q^{p^{k-1}}).
	\end{equation}
	Since for every $n\geq k$, $p^k$ divides $s_kp^k$, the polynomial $1-q^{p^k}$ divides the polynomial $q^{s_np^n}-~1$ in $\mathbb{Z}[q]$.  Then, for every $n\geq k$, there is $t_n(q)\in\mathbb{Z}[q]$ such that
	\begin{equation}\label{n}
	q^{s_np^n}-1=t_n(q)(1-q)\phi_p(q)\cdots\phi_p(q^{p^{k-1}}).
	\end{equation}
	So, it follows from~\eqref{ab} that $$1-q^r=1-q^{\tau(r)}+q^r(1-q)\phi_p(q)\cdots\phi_p(q^{p^{k-1}})\left[t_k(q)+q^{r_k}t_{k+1}(q)+\cdots+q^{r_{n-1}}t_n(q)+\cdots\right].$$
	As   $p$ does not divide $\tau(r)$ then $\frac{1}{1+q+\cdots+q^{\tau(r)-1}}\in\mathbb{Z}[q]_{\mathfrak{m}}$ and as $1-q^{\tau(r)}=(1-q)(1+q+\cdots+q^{\tau(r)-1})$ , then
	\begin{multline}\label{abc}
	\frac{1-q^r}{1-q^{\tau(r)}}=1+\frac{q^r\phi_p(q)\cdots\phi_p(q^{p^{r-1}})}{1+q+\cdots+q^{\tau(r)-1}}\left[t_k(q)+q^{r_k}t_{k+1}(q)+\cdots+q^{r_{n-1}}t_n(q)+\cdots\right].
	\end{multline}
	Following~\eqref{r} and~\eqref{n} we have $t_n(q)=\frac{q^{s_np^n}-1}{1-q^{p^k}}$.  The polynomial $q^{p^n}-1$ divides $q^{s_np^n}-1$ and, by Lemma~\ref{norme}, we know that the $\mathfrak{m}$-adic norm of $q^{p^n}-1$ is less than or equal to $1/p^{n+1}$,  then the $\mathfrak{m}$-adic norm of $q^{s_np^n}-1$ is less than or equal to $1/p^{n+1}$. Therefore, the $\mathfrak{m}$-adic norm of $t_n(q)$ is less than or equal to $1/p^{n+1-f}$, where $1/p^f$ is the $\mathfrak{m}$-adic norm of $1-q^{p^k}$. Since $\lim_{n\rightarrow\infty}1/p^{n+1-f}=0$ and the $\mathfrak{m}$-adic norm is ultrametric, we have  $$t_k(q)+q^{r_k}t_{k+1}(q)+\cdots+q^{r_{n-1}}t_n(q)+\cdots\in\mathbb{Z}_{q,p}.$$
	 Therefore, it follows from~\eqref{abc} that, $$\frac{1-q^r}{1-q^{\tau(r)}}\equiv 1\mod\phi_p(q)\cdots\phi_p(q^{p^{k-1}})\mathbb{Z}_{q,p}.$$
	Since $k\geq s+1$, $$\frac{1-q^r}{1-q^{\tau(r)}}\equiv 1\mod\phi_p(q)\cdots\phi_p(q^{p^{s}})\mathbb{Z}_{q,p}.$$ Since $\mathcal{A}'$ and $\mathcal{B}'$ have the same cardinal, and since moreover $\tau:\mathcal{A}'\rightarrow\mathcal{B}'$ is injective, we have 
	$$\prod_{r\in\mathcal{A}'}(1-q^r)\prod_{t\in\mathcal{B}'}(1-q^t)^{-1}-1\in\phi_p(q)\cdots\phi_p(q^{p^s})\mathbb{Z}_{q,p}.$$ Then, we get from~\eqref{egal} that,  for the arbitrary couple $(n,s)\in\mathbb{N}^2$ and the arbitrary positive integer $m$, \begin{equation}\label{eqn:produit}
	\frac{B^{(-1)}(n+mp^{s+1})B^{(0)}(\lfloor{n/p}\rfloor))}{B^{(0)}(\lfloor{n/p}\rfloor+mp^s))B^{(-1)}(n)}-1\in\phi_p(q)\cdots\phi_p(q^{p^s})\mathbb{Z}_{q,p}.
	\end{equation}  
	But, we have already showed that for every positive integer $j$, $\phi_p(q^{p^j})\in\mathfrak{m}$. Thus, we have $\phi_p(q)\cdots\phi_p(q^{p^s})\in\mathfrak{m}^{s+1}$. For this reason, Equation~\eqref{eqn:produit} implies that for every couple $(n,s)\in\mathbb{N}^2$ and for every positive integer $m$, $$\frac{B^{(-1)}(n+mp^{s+1})B^{(0)}(\lfloor{n/p}\rfloor))}{B^{(0)}(\lfloor{n/p}\rfloor+mp^s))B^{(-1)}(n)}\equiv1\mod\mathfrak{m}^{s+1}\mathbb{Z}_{q,p}.$$ 
	Therefore, Equation~\eqref{pasobase} is satisfied for every $(n,m,s)\in\mathbb{N}^3$.\\
	Finally,  we are going to show by induction on $i\geq-1$ that, for every $(n,m,s)\in\mathbb{N}^3$, $$\frac{B^{(i)}(n+mp^{s+1})}{B^{(i+1)}(\lfloor{n/p}\rfloor+mp^s)}\equiv\frac{B^{(i)}(n)}{B^{(i+1)}(\lfloor{n/p}\rfloor)}\mod\mathfrak{m}^{s+1}.$$
	We have already seen that it is true for $i=-1$ because \eqref{pasobase} is satisfied for every $(n,m,s)\in\mathbb{N}^3$. Suppose that for some $i\geq-1$ and for every $(n,m,s)\in\mathbb{N}^3$ we have
	\begin{equation}\label{pasoinductivo}
	\frac{B^{(i)}(n+mp^{s+1})}{B^{(i+1)})(\lfloor{n/p}\rfloor+mp^s)}-\frac{B^{(i)}(n)}{B^{(i+1)}(\lfloor{n/p}\rfloor)}\in\mathfrak{m}^{s+1}.
	\end{equation} 
	As $\textbf{F}_q(\mathfrak{m})\subset\mathfrak{m}$ then $\textbf{F}_q(\mathfrak{m}^{s+1})\subset\mathfrak{m}^{s+1}$ and therefore, by Equation~\eqref{pasoinductivo}, we obtain
	$$\frac{\textbf{F}_q(B^{(i)}(n+mp^{s+1}))}{\textbf{F}_q(B^{(i+1)})(\lfloor{n/p}\rfloor+mp^s)}-\frac{\textbf{F}_q(B^{(i)}(n))}{\textbf{F}_q(B^{(i+1)}(\lfloor{n/p}\rfloor))}\in\mathfrak{m}^{s+1}.$$ 
	So, $$\frac{B^{(i+1)}(n+mp^{s+1})}{B^{(i+2)}(\lfloor{n/p}\rfloor+mp^s)}\equiv\frac{B^{(i+1)}(n)}{B^{(i+2)}(\lfloor{n/p}\rfloor)}\mod\mathfrak{m}^{s+1}$$ 
	because by definition, for every positive integer $j$, $\textbf{F}_q(B^{(i)}(j))=B^{(i+1)}(j)$. \\
	Therefore, we have proved that the set of sequences $\{B^{(i)}(n)\}_{n\in\mathbb{N},i\geq-1}$, where $B^{(-1)}(n)=\frac{(q,q^{\alpha})_n}{(q,q)_n}$ and $B^{(i)}(n)=\textbf{F}^{i+1}_q(B^{(-1)}(n))$ satisfied the conditions (i)-(iv) from Lemma~\ref{dwork}. This puts us in a position to apply Lemma~\ref{dwork} to deduce that, for every $s\in\mathbb{N}$,
	\begin{small}
		$$\left(\sum_{k\geq0}B^{(-1)}(k)z^k\right)\left(\sum_{k=0}^{p^s-1}B^{(0)}(k)z^{pk}\right)\equiv\left(\sum_{k\geq0}B^{(0)}(k)z^{kp}\right)\left(\sum_{k=0}^{p^{s+1}-1}B^{(-1)}z^{k}\right)\in\mathfrak{m}^{s+1}.$$	
	\end{small}
	This last equality is equivalent to say that, for every  $s\in\mathbb{N}$, $$f_{\alpha}\textbf{F}_q(F_s)\equiv \textbf{F}_q(f_{\alpha})F_{s+1}\mod\mathfrak{m}^{s+1}.$$
\end{proof}

\subsection{Open Question}

Given a prime number $p$ and $\underline{\alpha}=(\alpha_1,\ldots,\alpha_n)$, $\underline{\beta}=(\beta_1,\ldots,\beta_n)\in(\mathbb{Q}\setminus\mathbb{Z}_{\leq0})^n$, we asked the following question:  does the $q$-hypergeometric operator $\mathcal{H}_{\delta_q}(\underline{\alpha},\underline{\beta})$ have a strong Frobenius structure for $p$?\\
According to \cite[ Theorem 6.1]{vmsff}, if $\alpha_i-\beta_j\notin\mathbb{Z}$ for all $i,j\in\{1,\ldots,n\}$ and if for all $i,j\in\{1,\ldots,n\}$, the $p$-adic norm of $\alpha_i$ and $\beta_j$ is less than or equal to 1, then  $\mathcal{H}(\underline{\alpha},\underline{\beta})$  has a strong Frobenius structure for $p$. So,  we expect that the $q$-hypergeometric operator $\mathcal{H}_{\delta_q}(\underline{\alpha},\underline{\beta})$ to have a strong Frobenius structure for almost every prime number $p$ when $\alpha_i-\beta_j\notin\mathbb{Z}$  for all $i,j\in\{1,\ldots,n\}$.\\
The proof of Theorem 6.1 of \cite{vmsff} relies essentially on the rigidity of the operator $\mathcal{H}(\underline{\alpha},\underline{\beta})$. In \cite{julien} Roques gave a definition of rigidity for $q$-difference operators  and he showed that under this definition the $q$-hypergeometric operator $\mathcal{H}_{\delta_q}(\underline{\alpha},\underline{\beta})$ is rigid.  This leads us to think that a way to determine whether  $\mathcal{H}_{\delta_q}(\underline{\alpha},\underline{\beta})$ has a strong Frobenius structure for almost all prime number $p$ when  $\alpha_i-\beta_j\notin\mathbb{Z}$ for all $i,j\in\{1,\ldots,n\}$ would be to use the notion of rigidity given in \cite{julien}.

\section{The work of André and Di Vizio}\label{sec_andre_divizio}

In \cite{anredivzio}, André and Di Vizio introduced a definition of a strong Frobenius structure for $q$-difference operators in the case where $q$ is an element of an ultrametric field satisfying certain conditions. The purpose of this section is to see that this definition does not seem to be appropriate to answer our questions asked in the introduction,  even though the results obtained in \cite{anredivzio} through their definition are deep. 

\smallskip

Let $K$ be a field of characteristic zero complete with respect to an ultrametric norm $|\cdot|$ and let $k$ be the residue field of $K$. We suppose that the characteristic of $k$ is $p>0$.  For every $\epsilon\in(0,1)$, put $I_{\epsilon}:=(1-\epsilon,1)$ and $$\mathcal{A}_{K}(I_{\epsilon})=\left\{\sum_{i\in\mathbb{Z}}a_iz^i:\text{ tel que }a_i\in K,\lim\limits_{i\rightarrow\pm\infty}|a_i|r^i=0, \text{ pour tout } r\in I_{\epsilon}\right\}.$$

The Robba ring is the ring $\mathcal{R}_K=\bigcup_{\epsilon>0}\mathcal{A}_K(I_{\epsilon})$. Let $q$ be in $K$ such that $q$ is not a root of unity and $|1-q|<1$. Let $\tau$ be a Frobenius endomorphism of $K$ verifying $\tau(q)=q$. In \cite{anredivzio}, André and Di Vizio gave a definition of strong Frobenius structure for the $\sigma_q$-modules with coefficients in $\mathcal{R}_K$.  Let  $\sigma_q-Mod^{(\phi)}_{\mathcal{R}_K}$ be the category of $\sigma_q$-modules with coefficients in $\mathcal{R}_K$ having a strong Frobenius structure and let $d-Mod^{(\phi)}_{\mathcal{R}_K}$ be the category whose objects are the differential modules with coefficients in $\mathcal{R}_K$ having a strong Frobenius structure. André and Di Vizio showed in \cite{anredivzio}  a local monodromy theorem which turns out  to be the $q$-analog of Crew's quasi-unipotence conjecture. This  allows them to show that if $k$ is algebraically closed, then there is a functor  $\text{Conf}: \sigma_q-Mod^{(\phi)}_{\mathcal{R}_K}\longrightarrow d-Mod^{(\phi)}_{\mathcal{R}_K}$ and they show that the functor $ \text{Conf}$ is an equivalence. Further,   Pulita \cite{pulita} showed that the functor $\text{Conf}$ can be extended to the category  of \emph{admissibles} modules, which shows that the construction made by André and Di Vizio does not depend \textit{a posteriori} on the strong Frobenius structure. Moreover, following Pulita \cite{pulita},  we see that the functor  $\text{Conf}$  sends  a $q$-difference operator with coefficients in $\mathcal{R}_K$ to a differential operator with coefficients in $\mathcal{R}_K$ having the same solutions in $\widetilde{\mathcal{R}_K}[Log z]$, where $\widetilde{\mathcal{R}_K}$ is an extension of $\mathcal{R}_K$, that is to say $\text{Conf}$ preserves the solutions. The fact that this functor preserves the solutions is an obstacle to answer to question regarding the confluence property. In fact, on the one hand, as the $\text{Conf}$ preserves the solutions then the strong Frobenius structure of  $M$ and the strong Frobenius structure of $\text{Conf}(M)$ are the same because the Frobenius transitions matrix for $M$ and $ \text{Conf}(M)$ are the same. On the other hand, even if we do not know if the operator $\mathcal{ H}_{\delta_q}(\underline{\alpha},\underline{\beta})$ has a strong Frobenius structure under the definition proposed in \cite{anredivzio}, we will  have  \textit{a posteriori}  $\text{Conf}(\mathcal{ H}_{\delta_q}(\underline{\alpha},\underline{\beta}))\neq\mathcal{ H}(\underline{\alpha},\underline{\beta})$ because $\mathcal{ H}_{\delta_q}(\underline{\alpha},\underline{\beta})$ and $\mathcal{ H}(\underline{\alpha},\underline{\beta})$ do not have the same solutions. Therefore, by using this approach, we cannot answer the open question asked in the previous section.  Here, we remind the reader that under our construction we have $ \textbf{ev}(\mathcal{H}_q(\underline{\alpha},\underline{\beta}))=\mathcal{H}(\underline{\alpha},\underline{\beta})$.\\
 The question concerning the congruences modulo the $p$-th cyclotomic polynomial also seems difficult to answer in a way affirmative for the following reason:   if $f(q,z)\in\mathbb{Z}[q][[z]]$ is a solution of a $q$-difference operator belonging to $\sigma_q-Mod^{(\phi)}_{\mathcal{R}_K}$, then $f(q,z)\in\mathbb{Z}[q][[z]]$ is  a solution of a differential operator having a strong Frobenius structure for $p$ because $\text{Conf}$ preserves the solutions. So,  it follows from  \cite[Theorem 2.1]{vmsff} and from the equality $\tau(q)=q$ that there are $a_0(z),\ldots, a_{n^2}(z)\in\mathbb{Z}[z]$ such that $a_0(z)Y_0+a_1(z)Y_1+\cdots+a_{n^2}Y_{n^2}\mod p$ is not zero and $$\sum_{i=0}^{n^2}a_i(z)f(1,z^{p^{ih}})\equiv0\mod p\mathbb{Z}[[z]].$$
Therefore, we recover a congruence of type  \eqref{eq:thm2_1} but we do not get a congruence of type  $\sum_{i=0}^{n^2}d_i(q,z)f(q^{p^{ih_p}},z^{p^{ih_p}})\equiv0\mod[p]_q\mathbb{Z}[q][[z]]$.  The limitation to have a congruence of this type is the supposition $\tau(q)=q$.

Institute of Mathematics of the Polish Academy of Sciences.\\
\emph{Email address:} devargasmontoya@impan.pl
%



\begin{thebibliography}{xx}
%
%
\bibitem{cyclotomique}
{\sc B. Adamczewski, J. Bell, E. Delaygue and F. Jouhet,}
{\it Congruences modulo cyclotomic polynomials and algebraic independece for q-series,}
Sém. Lothar. Combin., \textbf{78B} (2017), Proceedings of the 29th International Conference FPSAC, 12 pages.

\bibitem{ABD2019}
	{\sc B. Adamczewski, J. Bell, and E. Delaygue,}
	{\it Algebraic independence of G-functions and congruences "à la Lucas'',}
	Ann. Sci. \'Ec. Norm. Sup\'er \textbf{52} (2019), 515--559.

\bibitem{atiyah}
{\sc M.F. Atiyah and I.G. Macdonald,}
{\it An introduction to commutative algebra.}



\bibitem{andre}
	{\sc Y. Andr\'e,}
	{G-functions and geometry,}	
	Aspects of Mathematics, E13. Friedr. Vieweg \& Sohn, Braunschweig, 1989.
	
	

\bibitem{anredivzio}
{\sc Y. André and L. Di Vizio,}
{\it q-difference equations and p-adic local monodromy,}
Astéristique \textbf{296} (2004), 55--111.


\bibitem {crew}
	{\sc R. Crew,}
	{\it Rigidity and Frobenius Structure,}
	Documenta Mathematica. \textbf{22} (2017), 287-296.

\bibitem{lucia}
{\sc L. Di Vizio,}
{\it  On the arithmetic theory of q-difference equations. The q-analogue of the Grothendieck-Katz's conjecture on p-curvatures,}
Invent. Math \textbf{150} (2002), 517--578.


\bibitem{padiccycles}
{\sc 	B. Dwork}
{\it p-adic cycles,}
Publications mathématiques de l’I.H.É.S. \textbf{37} (1969), p. 27-115.

\bibitem {DworksFf}
{\sc B.M Dwork,}
{\it On p-adic differential equations I. The Frobenius structure of differential equations,}
Bull. Soc. Math. France \textbf{39-40} (1974), 27--37.



\bibitem{Kedlaya}
	{\sc K. Kedlaya,}
	{\it $p$-adic differential equations,}
	Cambridge studies in advanced mathematics \textbf{125}, Cambridge University Press, 2010. 


\bibitem{pulita}
{\sc A. Pulita,}
{\it p-Adic Confluence of q-Difference Equations,}
Compos. Math \textbf{144} (2008), 867--919.

\bibitem{julien}
{\sc J. Roques,}
{\it Birkhoff matrices, residues and rigidity for q-difference equations.,}
J. Reine Angew. Math \textbf{706} (2015), 215--244.



\bibitem{vmsff}
{\sc D. Vargas-Montoya,}
{\it Algébricité modulo $p$, séries hypergéométriques et structure de Frobenius forte.}
Bull. Soc. Math. France \textbf{149} (2021), 439--477.
\end{thebibliography}
\end{document}